\theoremstyle{plain}
\newtheorem{theorem}{Theorem}[section]
\newtheorem{corollary}[theorem]{Corollary}
\newtheorem{lemma}[theorem]{Lemma}
\newtheorem{proposition}[theorem]{Proposition}
\newtheorem*{theorem*}{Theorem}
\newtheorem*{cconj}{Concordance Conjecture}
\newtheorem*{acconj}{Almost-Concordance Conjecture}
\newtheorem*{lightbulb}{Concordance Lightbulb Theorem}
\theoremstyle{definition}
\newtheorem{definition}[theorem]{Definition}
\newtheorem*{definition*}{Definition}
\newtheorem{example}[theorem]{Example}
\theoremstyle{remark}
\newcommand{\N}{\mathbb{N}}
\newcommand{\Z}{\mathbb{Z}}
\newcommand{\upst}{^{\text{st}}}
\newcommand{\upth}{^{\text{th}}}
\newcommand{\Aut}{\operatorname{Aut}}
\newcommand{\id}{\operatorname{id}}
\newcommand{\im}{\operatorname{im}}
\newcommand{\coker}{\operatorname{coker}}
\newcommand{\Hom}{\operatorname{Hom}}
\newcommand{\ol}[1]{\overline{#1}}
\newcommand{\wh}[1]{\widehat{#1}}
\newcommand{\wt}[1]{\widetilde{#1}}
\DeclareMathOperator*{\plus}{\scalerel*{+}{\sum}}
\newcommand{\colim@}[2]{%
  \vtop{\m@th\ialign{##\cr
    \hfil$#1\operator@font colim$\hfil\cr
    \noalign{\nointerlineskip\kern1.5\ex@}#2\cr
    \noalign{\nointerlineskip\kern-\ex@}\cr}}%
}
\newcommand{\colim}{%
  \mathop{\mathpalette\colim@{\rightarrowfill@\scriptscriptstyle}}\nmlimits@
}
\title{Almost-concordance of knots in aspherical 3-manifolds}
\author{Ryan Stees}
\date{}
\begin{document}

\maketitle


\begin{abstract}
In this paper, we study topological concordance modulo local knotting, or \emph{almost-concordance}, of knots in 3-manifolds $M\neq S^3$. 
A. Levine, Celoria \cite{Celoria}, and Friedl-Nagel-Orson-Powell \cite{FNOP} conjecture that, absent the presence of an embedded dual 2-sphere, any free homotopy class $x$ of knots in $M$ contains infinitely many concordance classes modulo the action of the concordance group of knots in $S^3$ by local knotting. 
We develop a method for confirming this conjecture for any nontrivial class $x$ in any aspherical $M$ and provide computations that prove the conjecture in a large family of open cases.
Our technique employs an extension of Milnor's link invariants to knots and links in non-simply-connected 3-manifolds \cite{Stees24}.
We exhibit a large family of examples where, in a precise sense, we maximize the number of almost-concordance classes distinguished by these invariants.
\end{abstract}


\section{Introduction} \label{sec:intro}
The most familiar setting for studying (topological) concordance is the concordance group $\mathcal{C}$ of knots in $S^3$.
While the overall structure of this group remains mysterious, much is known; see \cites{LevineJ69a, LevineJ69b, CassonGordon,COT}. 
In particular, Cochran-Orr-Teichner \cite{COT} developed a filtration on $\mathcal{C}$, half of whose successive quotients are infinitely generated \cite{CHL}. 
In contrast, relatively little is known about concordance of knots in closed orientable 3-manifolds $M\neq S^3$.

The study of knot concordance in such 3-manifolds $M$ differs significantly with the classical $S^3$ setting. 
The set $\mathcal{C}^M$ of concordance classes of knots in $M$ does not in general admit a group structure.
Indeed, the connected sum operation does not preserve the 3-manifold $M$, as $M\#M\cong M$ if and only if $M\cong S^3$. 
Additionally, a non-simply-connected $M$ allows for embedding phenomena which cannot occur in $S^3$. 
In such $M$ we do not have an analogue of the unknot or unlink in the sense that there is no natural choice of knot or link to which all others can be reasonably compared. 
For instance, knots which are concordant must belong to the same free homotopy class in $[S^1,M]$. 
A free homotopy between concordant knots is given by the composition of the concordance with projection to $M$:\[S^1\times[0,1]\hookrightarrow M\times[0,1]\xrightarrow{\text{proj}} M.\]
Letting $\mathcal{C}^M_x$ denote the set of concordance classes of knots in $M$ representing $x\in[S^1,M]$, we obtain the decomposition \[\mathcal{C}^M=\bigsqcup_{x\in[S^1,M]}\mathcal{C}^M_x.\]

In contrast with the aforementioned results about $\mathcal{C}$, whether or not $\mathcal{C}^M_x$ is an \emph{infinite set} is open in general. 
One well-understood case is when the class $x$ admits an embedded dual 2-sphere.

\begin{lightbulb}[\cites{DNPR,Yildiz,FNOP}] \label{thm:lightbulb}
If $x\in[S^1,M]$ admits an embedded dual 2-sphere, then $\mathcal{C}^M_x$ contains a single element; that is, all knots representing $x$ are concordant. Moreover, this statement holds in the smooth category.
\end{lightbulb}

\noindent Celoria, based on a question of A. Levine, and Friedl-Nagel-Orson-Powell have conjectured that the setting of the \hyperref[thm:lightbulb]{Concordance Lightbulb Theorem} is the exceptional case.

\begin{cconj}[\cites{Celoria,FNOP}] \label{conj:c}
The concordance set $\mathcal{C}^M_x$ contains infinitely many elements if and only if the class $x$ does not admit an embedded dual 2-sphere.
\end{cconj}

While $\mathcal{C}^M$ does not admit a group structure, there is an action of the concordance group $\mathcal{C}$ on $\mathcal{C}^M$ by \emph{local knotting}; see Figure~\ref{fig:local}.
\begin{figure}
    \centering
    \begin{tikzpicture}
        \node[anchor=south west,inner sep=0] (image) at (0,0) {\includesvg[width=0.5\textwidth]{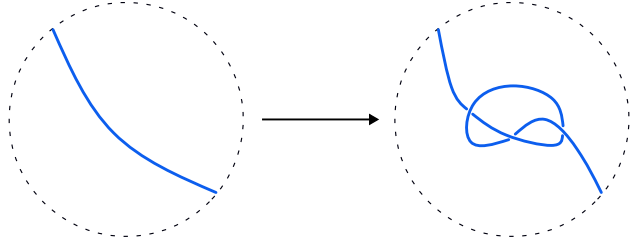}};
    \end{tikzpicture}
    \caption{Locally knotting $K\subset M$ by the right-handed trefoil.}
    \label{fig:local}
\end{figure}
Given knots $J\subset S^3$ and $K\subset M$, we define the action of $J$ on $K$ using a connected sum of pairs: \[(M,J\cdot K)=(S^3,J)\cdot(M,K)=(S^3,J)\,\#\,(M,K).\] 
This action descends to concordance classes, yielding an action of $\mathcal{C}$ on $\mathcal{C}^M$. 
One might hope that a positive answer to the \hyperref[conj:c]{Concordance Conjecture} could, in some cases, be achieved by locally knotting some $K\subset M$ representing $x$ by each of a family of nonconcordant knots in $S^3$. 
Instead, we will consider when it is possible to show $\mathcal{C}^M_x$ is infinite \emph{without} leveraging the size of the concordance group $\mathcal{C}$. 
To this end, we consider the orbit set of $\mathcal{C}^M$ under the action of local knotting.

Define $\wt{\mathcal{C}}^M=\mathcal{C}^M/\mathcal{C}$, the set of concordance classes of knots in $M$ modulo local knotting. 
While the study of concordance modulo local knotting traces its roots to work of Cappell-Shaneson \cite{CappellShaneson}, Celoria recently coined the term \emph{almost-concordance} to mean concordance modulo local knotting. 
We will thus refer to $\wt{\mathcal{C}}^M$ as the set of almost-concordance classes of knots in $M$. 
Note that we have an analogous decomposition \[\wt{\mathcal{C}}^M=\bigsqcup_{x\in[S^1,M]}\wt{\mathcal{C}}^M_x,\] where $\wt{\mathcal{C}}^M_x$ consists of almost-concordance classes representing the free homomtopy class $x$. 
We consider the following strengthening of the \hyperref[conj:c]{Concordance Conjecture}.

\begin{acconj}[\cites{Celoria,FNOP}] \label{conj:ac}
The almost-concordance set $\wt{\mathcal{C}}^M_x$ contains infinitely many elements if and only if the class $x$ does not admit an embedded dual 2-sphere.
\end{acconj}

Partial progress toward this conjecture has been made in \cites{MillerD,Schneiderman,Yildiz,FNOP,NOPP,Kuzbary}; we briefly summarize what is known. 
Observe that the \hyperref[thm:lightbulb]{Concordance Lightbulb Theorem} \cites{DNPR,Yildiz,FNOP} proves one direction of the \hyperref[conj:ac]{Almost-Concordance Conjecture}. 
In the opposite direction, the \hyperref[conj:ac]{Almost-Concordance Conjecture} holds when:
\begin{itemize}
\item $M$ is Seifert-fibered and $x$ is the class of the Seifert fiber \cite{MillerD}.
\item $x$ is nullhomotopic \cites{Schneiderman,Yildiz,FNOP}.
\item $M$ is a lens space and $x$ is any element of $[S^1,M]$ \cite{NOPP}.
\item $x$ is torsion with $\pi_1(M)/\langle\langle x\rangle\rangle\neq 1$ \cite{FNOP}.
\item $[x]=2u\in H_1(M)$ for some primitive class $u$ of infinite order \cite{FNOP}.
\item $[x]$ is represented by a separating curve on a non-separating embedded oriented surface in $M$ \cite{FNOP}.
\item $M=\#^k(S^1\times S^2)$ and $[x]=0\in H_1(M)$ \cite{Kuzbary}.
\end{itemize}

Some of the above results combine to show that the reverse direction of the \hyperref[conj:ac]{Almost-Concordance Conjecture} is known for most $x$ in spherical $M$ and for the trivial class in any $M$. 
Thus, the vast majority of remaining open cases in prime, irreducible 3-manifolds are nontrivial classes $x$ in aspherical $M$. 
Any such class has infinite order when represented as an element of $\pi_1(M)$. 
In this paper, we develop a method to prove the \hyperref[conj:ac]{Almost-Concordance Conjecture} for \emph{any} nontrivial $x$ in \emph{any} aspherical $M$. 
We then provide computations that prove the conjecture in a large class of open cases.


\subsection{Overview of technique and results} \label{subsec:intro:results}

We will distinguish knots up to almost-concordance using the \emph{lower central homology invariants} $\theta_n$ from \cite{Stees24}, an extension of Milnor's $\ol{\mu}$-invariants for links in $S^3$ \cite{Milnor}.
We employ the $\theta$-invariants to obstruct almost-concordance in the following main theorem, which we prove in Section~\ref{subsec:main:family}.

\begin{restatable*}{theorem}{family} \label{thm:family}
Given any knot $K$ in any aspherical $M$ representing any nontrivial $x\in[S^1,M]$, there exists a family of knots $\{K_\alpha^n\}=\cup_{n\geq 2}\{K_\alpha^n\}_{\alpha\in \ol{C}_n}$ such that
\begin{itemize}
\item For $m\neq n$, we have $\{K_\alpha^m\}\cap\{K^n_\beta\}=\{K\}$.
\item Each $K_\alpha^n$ represents $x\in[S^1,M]$.
\item For each fixed $n\geq 2$, elements of $\{K_\alpha^n\}$ are in bijective correspondence with elements of \[\ol{C}_n=\frac{\im\left(H_3(\Gamma/\Gamma_n)\to H_3(\pi/\Gamma_n)\right)}{\im\left(H_3(\Gamma/\Gamma_{n+1})\to H_3(\pi/\Gamma_n)\right)},\] where $\pi$ is the knot group of $K$, $\Gamma=\ker\left(\pi\twoheadrightarrow\pi_1(M)\right)$, and the homomorphisms on $H_3(-)$ are induced by the canonical group homomorphisms $\Gamma/\Gamma_{n+1}\twoheadrightarrow\Gamma/\Gamma_n\hookrightarrow\pi/\Gamma_n$. 
In particular, the fixed knot $K$ is $K_0^n$ for all $n$.
\item For distinct $K_\alpha^m$ and $K_\beta^n$, we have $[K_\alpha^m]\neq[K_\beta^n]\in\wt{\mathcal{C}}^M_x$, that is, $K_\alpha^m$ and $K_\beta^n$ are not almost-concordant.
\end{itemize}
\end{restatable*}

The $\theta$-invariants are concerned with the $\pi_1(M)$-lower central series of the \emph{knot group} $\pi=\pi_1(E_K)$, where $E_K$ is the exterior of $K\subset M$.
Let $\Gamma$ be the kernel of the inclusion-induced homomorphism $\pi\twoheadrightarrow\pi_1(M)$.
Then $\Gamma$ is normally generated by a meridian of $K$.
The \emph{$\pi_1(M)$-lower central series} of $\pi$ is \[\cdots\trianglelefteq\Gamma_{n+1}\trianglelefteq\Gamma_n\trianglelefteq\cdots\trianglelefteq\Gamma_2\trianglelefteq\Gamma\trianglelefteq\pi,\] where $\Gamma_2=[\Gamma,\Gamma]$ is the commutator subgroup of $\Gamma$ and $\Gamma_{n+1}=[\Gamma,\Gamma_n]$ in general.
In other words, the $\pi_1(M)$-lower central series of $\pi$ consists of the group $\pi$ along with the (classical) lower central series of the subgroup $\Gamma$.
Note that $\Gamma_n$ is normal in $\pi$ for all $n$.
We will call the quotients $\pi/\Gamma_n$ the \emph{$\pi_1(M)$-lower central quotients} of $\pi$.

Cappell-Shaneson first observed that the exterior of a concordance between two knots $K$ and $K'$ in $M\times[0,1]$ is a homology cobordism rel boundary with coefficients in $\Z[\pi_1(M)]$ \cite{CappellShaneson}.
This observation, along with an application of the Stallings-Dwyer Theorem \cites{Stallings,Dwyer}, shows that the $\pi_1(M)$-lower central quotients $\pi/\Gamma_n$, along with boundary data, are concordance invariants.
In fact, because local knotting does not affect the $\Z[\pi_1(M)]$-homology type of the knot exterior, these quotients are almost-concordance invariants.

The $\theta$-invariants obstruct the existence of peripheral-data-preserving isomorphisms between the $\pi_1(M)$-lower central quotients of knot groups.
For a fixed knot $K\subset M$ and some other knot $K'\subset M$, the $n\upth$ lower central homology invariant $\theta_n(K')$ of $K'$ relative to $K$ takes values in $H_3(X_n(K))/\Aut(\pi/\Gamma_n,\partial)$, where $X_n(K)$ is a space which depends on $K$ and $\Aut(\pi/\Gamma_n,\partial)$ is some group acting on $X_n(K)$ by homotopy self-equivalences.
We provide details in Section~\ref{sec:Milnor}.

These invariants are well-adapted to proving broad statements about the \hyperref[conj:ac]{Almost-Concordance Conjecture} because they are defined for knots (in fact, links) in any closed orientable 3-manifold, they are almost-concordance invariants, as we prove in Section~\ref{subsec:Milnor:localknot}, and they can be related to well-understood invariants of links in $S^3$, as we see in Section~\ref{subsec:construct:S3}.
More specifically, although we do not have a group structure on the almost-concordance set $\mathcal{C}^M_x$, the difference of the invariants $\theta_n(K')-\theta_n(K)$ is well-defined and sometimes has a useful interpretation in terms of Milnor's invariants of links in $S^3$.

Given any knot $K$ representing some nontrivial $x\in[S^1,M]$, we produce the family $\{K^n_\alpha\}$ using a family of associated Brunnian links $L^n_\alpha\subset S^3$.
The Brunnian link $L^n_\alpha$, together with a collection of (based) embedded curves in the exterior of $K$, gives instructions for modifying $K$ to produce $K^n_\alpha$ via an ambient connected sum.
We call this procedure an \emph{ambient connected sum with a Brunnian link}.
An example is illustrated in Figure~\ref{fig:whitehead}.
Roughly speaking, the difference between the exteriors of $K^n_\alpha$ and $K$ in $M$ is the exterior of the Brunnian link $L^n_\alpha$ in $S^3$.
This statement is made precise in Proposition~\ref{prop:difference}, where we show that, for each $n\geq 2$ and $\alpha\in\ol{C}_n$, $\theta_n(K^n_\alpha)-\theta_n(K)$ is the image of Orr's invariant $\theta_n^O(L^n_\alpha)$ \cite{Orr89} under some suitable homomorphism which relates the link group of $L^n_\alpha$ to the knot group of $K$.

Theorem~\ref{thm:family} answers the \hyperref[conj:ac]{Almost-Concordance Conjecture} for the class $x$ in the 3-manifold $M$ if the set $\cup_{n\geq 2}(\ol{C}_n-\{0\})$ is infinite.
In Section~\ref{subsec:main:large}, we analyze the homomorphism $H_3(\Gamma/\Gamma_n)\to H_3(\pi/\Gamma_n)$ and provide conditions which guarantee that the family of knots $\{K_\alpha^n\}$ is, in some sense, as large as possible.
These conditions arise from our analysis of the $\Z[\pi_1(M)]$-module structure on $H_*(\Gamma/\Gamma_n)$ in Section~\ref{sec:zpi}.

\begin{restatable*}{theorem}{infinite} \label{thm:infinite}
Suppose that the following hold for the aspherical 3-manifold $M$ and the homotopically essential knot $K\subset M$ (for some basing of $K$):
\begin{enumerate}[label=(\arabic*)]
\item $[K]$ is primitive and infinite order in $H_1(M)$.
\item The centralizer of any nontrivial power of $[K]\in\pi_1(M)$ is cyclic.
\item The left cosets of $\langle[K]\rangle\leq\pi_1(M)$ admit a total ordering which is invariant under the $\pi_1(M)$-action by left multiplication.
\end{enumerate}
Let $\mathcal{M}(k)$ denote the number of linearly independent Milnor invariants of $k$-component links of length $k$ up to relabeling components. For each $n\geq 2$, the group $\ol{C}_n$ is free abelian of rank at least ${\mathcal{M}(n+1)}$.
\end{restatable*}

\noindent In Section~\ref{subsec:main:large}, we will see that $\mathcal{M}(n)$ grows on the order $n^n/n!$.
In particular, $\lim_{n\to\infty}\mathcal{M}(n)=\infty$.
Note that condition (3) holds if $\pi_1(M)$ is left-orderable and $\langle[K]\rangle$ is a relatively convex subgroup (see \cite{ClayRolfsen}). Further note that condition (2) holds for every nontrivial $g\in\pi_1(M)$ when $M$ is hyperbolic (see \cite{AFW}).

We then provide an explicit family of examples in Section~\ref{subsec:main:ex} realizing the conditions in Theorem~\ref{thm:infinite}. Specifically, let $M$ be a genus $g\geq 1$ surface bundle over $S^1$ with monodromy $\psi$, that is, \[M=\frac{\Sigma_g\times[0,1]}{(p,1)\sim(\psi(p),0)}.\] Assume without loss of generality that $\psi$ has a fixed point $p_0\in\Sigma_g$, and let $K=(\{p_0\}\times[0,1])/\sim$. As a consequence of Theorem~\ref{thm:infinite}, we prove the following.

\begin{restatable*}{corollary}{examplecor} \label{cor:examplecor}
If the monodromy $\psi$ is Anosov (when $g=1$) or pseudo-Anosov (when $g\geq 2$) and the induced automorphism $\psi_*$ of $H_1(\Sigma_g)$ has eigenvalues which are real and positive, then the \hyperref[conj:ac]{Almost-Concordance Conjecture} holds for $x=[K]\in[S^1,M]$.
\end{restatable*}

\noindent Note that Corollary~\ref{cor:examplecor} resolves the \hyperref[conj:ac]{Almost-Concordance Conjecture} in some cases where $x=[K]\in\pi_1(M)$ is primitive, infinite order, and normally generates $\pi_1(M)$; see Example \ref{ex:torusbundleanosov}.
Aside from the \hyperref[thm:lightbulb]{Concordance Lightbulb Theorem}, the conjecture has not been previously resolved in any such cases.


\subsection{Notation} \label{subsec:intro:notate}

Throughout, $K\subset M$ will denote a knot in the closed orientable 3-manifold $M$. 
We will usually assume $M$ and $K$ are oriented, so that the fundamental class $[M]$ and meridian of $K$ are well-defined. 
We will typically assume $M$ is aspherical and $[K]\in[S^1,M]$ is nontrivial unless otherwise stated. 
We will denote the regular neighborhood of a submanifold $X$ in a manifold $Y$ by $\nu X$ and the exterior of $X$ in $Y$ by $E_X$.
For convenience, we will often assume the basepoint of $M$ is on $\partial\nu K$, so that there is a canonical basing of $K$ consisting of a straight line segment from the basepoint to a point on $K$ in a $D^2$ fiber of $\nu K$.
We will also denote the based homotopy class of $K$ given by this basing by $[K]\in\pi_1(M)$.
The knot group of $K\subset M$, the fundamental group $\pi_1(E_K)$, will be denoted by $\pi$. 
We will let $\Gamma\trianglelefteq\pi$ be the normal subgroup $\Gamma=\ker(\pi\twoheadrightarrow\pi_1(M))$, so that $\pi/\Gamma_n$ is the $n\upth$ $\pi_1(M)$-lower central quotient of $\pi$.
For some other knot $K'\subset M$, we will use analogous notation.


\subsection*{Acknowledgements}

The author thanks Mark Powell for suggesting the invariants from \cite{Stees24} could be applied to study the \hyperref[conj:ac]{Almost-Concordance Conjecture}.
The author is also grateful to Kent Orr and Mark Powell for many valuable conversations.
A talk Jonathan Johnson gave at ICERM provided insight into the orderability aspect of Corollary~\ref{cor:examplecor}, and the author thanks him for helpful conversations.
Additionally, the author thanks Alexandra Kjuchukova, Slava Krushkal, and Maggie Miller for discussions concerning this work.


\tableofcontents


\section{Milnor's invariants in closed orientable 3-manifolds} \label{sec:Milnor}

In this section, we suspend our assumption that $M$ is aspherical in order to discuss the invariants of \cite{Stees24}, which are defined for links in closed orientable 3-manifolds, in full generality. 
Here we summarize some of the main contents of \cite{Stees24}. 
See also \cite{Stees23} for further details.

To any link $L\subset M$, we associate a tower of spaces $\{X_n(L)\}_{n\geq 1}$ over $\pi_1(M)$, well-defined up to homotopy equivalence and equipped with canonical homotopy classes of maps $\iota_n(L):M\to X_n(L)$. 
The space $X_n(L)$ is the homotopy pushout of the diagram $\mathcal{D}_n(L)$ below, where $j:E_L\hookrightarrow M$ is the inclusion and $p_n$ is induced by the canonical projection $\pi\twoheadrightarrow\pi/\Gamma_n$.
\begin{center}
\begin{tikzcd}
 & & E_L \arrow[rr, "p_n"] \arrow[dd, hookrightarrow, "j"'] & & K(\pi/\Gamma_n,1) \\
\mathcal{D}_n(L) & = & & & \\
 & & M & &
\end{tikzcd}
\end{center}
A useful model for $X_n(L)$ is the standard homotopy pushout $X_n(L)=M_{p_n}^\times\cup_{E_L} M=M_{p_n}^\times\cup_{\partial E_L}\nu L$, where $M_{p_n}^\times$ is the reduced mapping cylinder of the map $p_n$. 
The canonical projections $p_{m,n}:\pi/\Gamma_m\twoheadrightarrow\pi/\Gamma_n$ for $m\geq n$ induce morphisms of diagrams $\mathcal{D}_m(L)\to\mathcal{D}_n(L)$ over $\pi_1(M)$ which induce based maps $\psi_{m,n}=\psi_{m,n}(L):X_m(L)\to X_n(L)$, yielding the following tower of spaces over $\pi_1(M)$.
\begin{center}
\begin{tikzcd}
& & M \arrow[dl,"\iota_{n+1}(L)"'] \arrow[d,"\iota_n(L)"] \arrow[drr,"\iota_1(L)"] & & &\\
\cdots \arrow[r,"\psi_{n+2{,}n+1}"'] & X_{n+1}(L) \arrow[r,"\psi_{n+1{,}n}"'] & X_n(L) \arrow[r,"\psi_{n{,}n-1}"'] & \cdots \arrow[r,"\psi_{2{,}1}"'] & X_1(L)
\end{tikzcd}
\end{center}
Using the Stallings-Dwyer Theorem and obstruction theory, one sees that a concordance between two links $L, L'\subset M$ determines a canonical based homotopy equivalence of pairs \[\ol{h}:(X_n(L'),\nu L')\xrightarrow{\simeq}(X_n(L),\nu L)\] for each $n$, and the maps $\iota_n(L'):M\to X_n(L')$ and $\iota_n(L):M\to X_n(L)$ correspond via this homotopy equivalence, that is, we have $\ol{h}\circ\iota_n(L')\simeq \iota_n(L)$.

The invariants of \cite{Stees24} are defined \emph{relative to a fixed link $L$}. 
In 3-manifolds $M\neq S^3$, there is no sensible choice of link in $M$ to which all others may be reasonably compared. 
Recall from Section~\ref{sec:intro} that concordant links have components which are freely homotopic in $M$, so it does not make sense to compare, for instance, links with homotopically essential components to the local unlink. 
The fixed link $L$ therefore plays the role of the unlink, and other links $L',L''\subset M$ are compared ``over $L$".

Given a link $L'\subset M$ and an \emph{$n$-basing} $\phi$ for $L'$ relative to the fixed link $L$, data which determines a canonical homotopy equivalence of pairs $\ol{h}_\phi:(X_n(L'),\nu L')\xrightarrow{\simeq}(X_n(L),\nu L)$, invariants are obtained by considering the homotopy class $h_n(L',\phi)=\ol{h}_\phi\circ\iota_n(L')\in[M,X_n(L)]_0$ and comparing it to the fixed class $h_n(L,\id)=\iota_n(L)$. 

\begin{definition} \label{def:nbasing}
Fix an $m$-component link $L\subset M$, and let $L'\subset M$ be another $m$-component link. An {\it $n$-basing} for $L'$ relative to $L$ is an ordered pair $(\phi,\phi_\partial)$ such that
\begin{enumerate}[label=(\arabic*)]
\item $\phi_\partial:H_1(\partial E_{L'})\xrightarrow{\cong}H_1(\partial E_L)$ is an \emph{admissible} isomorphism, that is, it preserves meridians, orientations, and orderings of components, and
\item $\phi:\pi'/\Gamma'_n\xrightarrow{\cong}\pi/\Gamma_n$ is an isomorphism over $\pi_1(M)$ such that $\phi_\partial$ is {\it compatible} with $\phi$, that is, given any basing $\tau'$ for $L'$, there exists a basing $\tau$ for $L$ such that the following diagram commutes.
\begin{center}
\begin{tikzcd}
\pi_1(\partial E_{L'}\cup\tau') \arrow[r, "\phi_{\partial*}"', "\cong"] \arrow[d] & \pi_1(\partial E_L\cup\tau) \arrow[d] \\
\pi' \arrow[d, twoheadrightarrow] & \pi \arrow[d, twoheadrightarrow] \\
\pi'/\Gamma'_n \arrow[r, "\cong", "\phi"'] & \pi/\Gamma_n
\end{tikzcd}
\end{center}
\end{enumerate}
\end{definition}

\noindent We will often suppress notation and write $\phi$ instead of the pair $(\phi,\phi_\partial)$.

\begin{proposition}[\cite{Stees24} Proposition 4.2]\label{prop:nbasing}
The link $L'$ admits an $n$-basing relative to $L$ if and only if there exists a based homotopy equivalence of pairs \[h:(M_{p_n'}^\times,\partial E_{L'})\xrightarrow{\simeq}(M_{p_n}^\times,\partial E_L)\] over $\pi_1(M)$ restricting to an admissible homeomorphism $\partial E_{L'}\xrightarrow{\cong}\partial E_L$. 
Additionally, any two homotopy equivalences realizing the same $n$-basing are based homotopic as maps of pairs. 
Furthermore,  such a homotopy equivalence $h$ extends canonically (up to based homotopy) to a based homotopy equivalence of pairs \[\ol{h}:(X_n(L'),\nu L')\xrightarrow{\simeq} (X_n(L),\nu L).\]
\end{proposition}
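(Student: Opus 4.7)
The plan is to prove both directions by exploiting the fact that $M_{p_n}^\times$ deformation retracts onto $K(\pi/\Gamma_n,1)$ and is hence aspherical, so that based maps from any CW-complex into $M_{p_n}^\times$ are classified, up to based homotopy, by their induced homomorphism on $\pi_1$. The over-$\pi_1(M)$ condition together with admissibility on the boundary will pin down this induced homomorphism to agree with $\phi$ rather than merely be conjugate to it.

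For the easier direction ($\Leftarrow$), assume we are given $h$. Define $\phi_\partial$ to be the homeomorphism $h|_{\partial E_{L'}}$ (read off on $H_1$); the hypothesis that this restriction is admissible delivers condition (1) immediately. Define $\phi$ as the isomorphism $h_*:\pi_1(M_{p_n'}^\times)\cong\pi'/\Gamma'_n\to\pi/\Gamma_n\cong\pi_1(M_{p_n}^\times)$; the over-$\pi_1(M)$ hypothesis makes this an isomorphism over $\pi_1(M)$. For compatibility (2), given any basing $\tau'$ of $L'$, push it forward by $h$ to a path in $M_{p_n}^\times$; since $h$ is based and a homeomorphism on boundaries, one can homotope this path rel endpoints into a basing $\tau$ of $L$ in $M$, and the required square commutes by naturality of $\pi_1$.

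For the converse ($\Rightarrow$), construct $h$ in stages. On $\partial E_{L'}$, take $h|_{\partial E_{L'}} = \phi_\partial$, realized as an admissible homeomorphism to $\partial E_L$. To extend across $E_{L'}$, note that $E_{L'}\hookrightarrow M_{p_n'}^\times$ followed by the deformation retraction is the classifying map $p_n'$; postcomposing with $\phi$ produces a map $E_{L'}\to K(\pi/\Gamma_n,1)\simeq M_{p_n}^\times$ which, by the compatibility condition (2), agrees on $\partial E_{L'}$ with $\phi_\partial$ up to based homotopy (this is precisely where the $\tau'$-to-$\tau$ square is used to align basepoints). Asphericity of $M_{p_n}^\times$ lets such a boundary homotopy be absorbed into an honest extension. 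Finally, extend across $M_{p_n'}^\times\setminus E_{L'}$: the universal property of the reduced mapping cylinder, combined with the factorization through $\phi$, gives a map $M_{p_n'}^\times\to M_{p_n}^\times$ inducing $\phi$ on $\pi_1$ and compatible with our partial map. This is a based homotopy equivalence because it induces an isomorphism on $\pi_1$ between two aspherical spaces. Uniqueness of $h$ up to based homotopy of pairs is another asphericity argument: two extensions of the same boundary data inducing the same $\phi$ on $\pi_1$ are classified by the same element of $[M_{p_n'}^\times,M_{p_n}^\times]_0$, and the obstructions to a homotopy rel $\partial E_{L'}$ vanish in positive degrees.

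For the final assertion, extend $h$ to $\ol{h}:(X_n(L'),\nu L')\to(X_n(L),\nu L)$ by gluing the homeomorphism $\nu L'\cong\nu L$ determined by the admissible identification $\phi_\partial$ on boundaries; admissibility (preservation of meridians, orientations, and orderings) guarantees the gluing produces a well-defined map of pairs, and the result is a based homotopy equivalence since it is built from two along a common homeomorphism on the gluing locus. The main obstacle will be the bookkeeping with basepoints and basings in the converse direction: condition (2) is formulated for \emph{arbitrary} $\tau'$, and one must use this flexibility to arrange that the extension of $h|_{\partial E_{L'}}$ across $E_{L'}$ induces exactly $\phi$, not merely some inner-automorphism conjugate. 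Once basepoint data is handled correctly, the remaining arguments are routine obstruction theory against aspherical targets.
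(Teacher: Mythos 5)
This proposition is cited from \cite{Stees24} and not proved in the present paper, so there is no in-text proof to compare against; I can only evaluate your proposal on its own terms.

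Your overall strategy is sound: both reduced mapping cylinders are aspherical (they deformation retract to $K(\pi'/\Gamma'_n,1)$ and $K(\pi/\Gamma_n,1)$), the boundary tori are aspherical, so maps and homotopies are controlled entirely by $\pi_1$ data plus obstruction theory, and the admissibility of $\phi_\partial$ (meridians, orientations, orderings) is exactly what lets the boundary homeomorphism $\partial E_{L'}\xrightarrow{\cong}\partial E_L$ be capped off to a homeomorphism $\nu L'\xrightarrow{\cong}\nu L$, yielding $\ol{h}$. The $\Leftarrow$ direction and the final gluing step are essentially fine as you describe them.

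The genuine gap is the one you flag yourself but do not close: in the $\Rightarrow$ direction, you must produce a based map $M_{p_n'}^\times\to M_{p_n}^\times$ that induces $\phi$ on $\pi_1$ \emph{and} restricts on $\partial E_{L'}$ to the chosen realization of $\phi_\partial$ — not merely a map agreeing with it after conjugating $\phi$ by some element. Because $\partial E_{L'}$ is disconnected, its ``induced map on $\pi_1$'' is only meaningful after attaching a basing $\tau'$, and compatibility (condition (2) of Definition~\ref{def:nbasing}) supplies for each $\tau'$ a corresponding $\tau$ making the square commute. You need to actually use this: pick a basing $\tau'$, get the matching $\tau$, build the extension across $E_{L'}\cup\tau'$ rel the wedge of boundary tori with basing paths, and verify that the resulting $\pi_1$-map on $\pi'/\Gamma'_n$ is $\phi$ itself (the commutativity of the compatibility square is what rules out a stray inner automorphism). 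Similarly, the uniqueness claim (``any two homotopy equivalences realizing the same $n$-basing are based homotopic as maps of pairs'') is a relative obstruction-theory statement, and the $k=1$ stage of that obstruction theory — homotopy rel $\partial E_{L'}\cup\tau'$ on the $1$-skeleton, before the higher groups vanish — is exactly where the same basepoint alignment must be discharged; ``obstructions vanish in positive degrees'' is only automatic from degree $2$ onward. As written, your argument names the difficulty and asserts it can be handled, but the handling is the substance of the proof. With that step spelled out, the rest is routine, as you say.
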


\begin{definition} \label{def:hpair}
Fix an $m$-component link $L\subset M$, and let $L'\subset M$ be another $m$-component link. Suppose $L'$ admits an $n$-basing $(\phi,\phi_\partial)$ relative to $L$. The {\it $n^{\text{th}}$ lower central homotopy invariant} of the pair $(L',\phi)$ relative to $L$ is the homotopy class \[h_n(L',\phi)=\ol{h}_\phi\circ\iota_n(L')\in[M,X_n(L)]_0,\] where $\ol{h}_\phi$ is the homotopy equivalence of pairs $\ol{h}_\phi:(X_n(L'),\nu L')\xrightarrow{\simeq} (X_n(L),\nu L)$ obtained from Proposition~\ref{prop:nbasing}.
\end{definition}

The class $h_n(L',\phi)$ is well-defined, but it may depend on the choice of $n$-basing $\phi$. This dependence is measured by the group of {\it self-$n$-basings} of $L$, denoted $\Aut(\pi/\Gamma_n,\partial)$, which consists of all $n$-basings of $L$ relative to itself. The group $\Aut(\pi/\Gamma_n,\partial)$ acts on the set $[M,X_n(L)]_0$ by post-composition with homotopy self-equivalences of the pair $(X_n(L),\nu L)$. To remove the indeterminacy introduced by the choice of $n$-basing, we take the value of $h_n(L',\phi)$ in the orbit space $[M,X_n(L)]_0/\Aut(\pi/\Gamma_n,\partial)$.

\begin{definition} \label{def:h}
Let $L$ and $L'$ be as in the previous definition. The {\it $n^{\text{th}}$ lower central homotopy invariant} of $L'$ relative to $L$ is the image $h_n(L')$ of the homotopy class $h_n(L',\phi)$ in the orbit space $[M,X_n(L)]_0/\Aut(\pi/\Gamma_n,\partial)$. We say this invariant \emph{vanishes} if $h_n(L')=h_n(L)$.
\end{definition}

An approach K. Orr pioneered in \cite{Orr89} inspired the approach to defining the lower central homotopy invariants; they are defined in a similar manner to invariants of P. Heck \cite{Heck}. The invariants $h_n$ also determine two additional invariants:
\begin{itemize}
\item The \emph{lower central homology invariants} \[\theta_n(L')=[\theta_n(L',\phi)]\in H_3(X_n(L))/\Aut(\pi/\Gamma_n,\partial),\] where $\theta_n(L',\phi)=h_n(L',\phi)_*[M]\in H_3(X_n(L))$.
\item The invariants $\ol{\mu}_n(L')$, a further reduction of the $h_n$ which we call {\it Milnor's invariants}. Like Milnor's $\ol{\mu}$-invariants for links in $S^3$, they determine the lower central quotients of link groups one step at a time.
\end{itemize}

We will often refer to any of the invariants $h_n$, $\theta_n$, and $\ol{\mu}_n$ colloquially as ``Milnor's invariants", although the invariants $\ol{\mu}_n$ are the true extensions of Milnor's invariants for links in $S^3$. The lower central homology invariants $\theta_n$ will feature heavily in this work. We now paraphrase the main results involving the invariants $h_n$, $\theta_n$, and $\ol{\mu}_n$ (see \cite{Stees24} for details):
\begin{itemize}
\item \emph{Invariance:} The $h_n$ (hence $\theta_n$ and $\ol{\mu}_n$) are topological concordance invariants. Given concordant links $L',L''\subset M$, we have $h_n(L')=h_n(L'')$ when defined. In particular, if $L'$ is concordant to the fixed link $L$, then $h_n(L')$ is defined and vanishes for all $n$.
\item \emph{Characterization for $h_n$:} We have $h_n(L')=h_n(L'')$ if and only if $L'$ is $n$-cobordant to $L''$. Roughly speaking, $L'$ and $L''$ are {\it $n$-cobordant} if they cobound a surface in $M\times[0,1]$ which looks like a concordance to the $n^{\text{th}}$ lower central quotients. In particular, $h_n(L')$ vanishes if and only if $L'$ is $n$-cobordant to the fixed link $L$.
\item \emph{Characterization for $\ol{\mu}_n$:} We have $\ol{\mu}_n(L')=\ol{\mu}_n(L'')$ if and only if there exists an $(n+1)$-basing for $L''$ relative to $L'$. In particular, $\ol{\mu}_n(L')$ vanishes if and only if $L'$ admits an $(n+1)$-basing relative to the fixed link $L$.
\item {\it Realization:} Every homotopy class in $[M,X_n(L)]_0$ satisfying some mild conditions is realized as $h_n(L',\phi)$ for some link $L'\subset M$.
\item \emph{Lifting property:} The invariant $\ol{\mu}_n(L')$ vanishes if and only if the invariant $h_n(L')$ lifts to a realizable class in $[M,X_{n+1}(L)]_0$.
\item {\it Specialization to previous invariants:} For links in $S^3$ and $n\geq 2$, the invariants $h_n$ relative to the unlink are equivalent to invariants of Orr \cite{Orr89}, and the invariant $\ol{\mu}_n$ relative to the unlink is equivalent to Milnor's $\ol{\mu}$-invariant of length $n+1$ \cite{Milnor}. For empty links, the invariants $\theta_n$ and $\ol{\mu}_n$ are the Cha-Orr homology cobordism invariants, ``Milnor's invariants of 3-manifolds", from \cite{ChaOrr}.
\end{itemize}


\subsection{Invariance under local knotting} \label{subsec:Milnor:localknot}

We now establish the invariance of the lower central homotopy invariants $h_n$ (hence the lower central homology invariants $\theta_n$ and Milnor's invariants $\ol{\mu}_n$) under local knotting.

\begin{proposition} \label{prop:localknot}
Let $K\subset M$ be a knot, and suppose $K'=J\cdot K$ is obtained from $K$ by local knotting, where $J\subset S^3$. 
Then $h_n(K')=h_n(K)$ for all $n$.
\end{proposition}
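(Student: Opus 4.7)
The plan is to exhibit, directly from the local knotting data, an explicit $n$-basing $\phi$ for $K'$ relative to $K$ (in the sense of Definition~\ref{def:nbasing}) realized by a single map $F: E_{K'} \to E_K$ which is the identity outside the ball supporting the local connected sum, and then to deduce $h_n(K',\phi)=h_n(K,\id)$ from the naturality of the homotopy pushout construction of $X_n(-)$.

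First I would construct $F$. Fix a 3-ball $B\subset M$ realizing the local knotting, so that $K$ and $K'$ coincide on $M\setminus B$ while $K\cap B=\alpha$ is an unknotted arc and $K'\cap B=J_0$ is a $1$-$1$ tangle presenting $J$. Because $B$ is simply connected, the arcs $J_0$ and $\alpha$ are properly homotopic rel $\partial B$, and a standard thickening produces a smooth map of pairs $f:(B,J_0)\to(B,\alpha)$ rel $\partial B$ restricting to a diffeomorphism $\nu J_0\xrightarrow{\cong}\nu\alpha$ sending meridian to meridian. Define $F$ to be the identity on $E_K\setminus B=E_{K'}\setminus B$ and $f|_{X_J}:X_J\to N$ on $X_J=B\setminus\nu J_0$, where $N=B\setminus\nu\alpha$. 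Then $F$ restricts to an admissible diffeomorphism $F_\partial:\partial E_{K'}\xrightarrow{\cong}\partial E_K$, and the inclusions $j\circ F,\, j':E_{K'}\to M$ are homotopic rel $\partial E_{K'}$ since inside the contractible ball $B$ the two resulting maps $X_J\to B$ agree on the annulus $\partial B\cap X_J$ and are therefore homotopic rel that annulus.

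Next I would verify that $F$ induces an isomorphism $\overline{F}_n:\pi'/\Gamma'_n\xrightarrow{\cong}\pi/\Gamma_n$ over $\pi_1(M)$ for each $n$. Since $X_J$ and $N$ lie inside the contractible ball $B$, the compositions $\pi_1(X_J)\to\pi_1(E_{K'})\to\pi_1(M)$ and $\pi_1(N)\to\pi_1(E_K)\to\pi_1(M)$ factor through $\pi_1(B)=1$; hence the preimages of $X_J$ and $N$ in the respective $\pi_1(M)$-classifying covers of $E_{K'}$ and $E_K$ are disjoint unions of the universal covers of $X_J$ and $N$, both contractible (as each is aspherical Haken). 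A Mayer--Vietoris argument for $E_{K'}=(E_{K'}\setminus B)\cup X_J$ and $E_K=(E_K\setminus B)\cup N$ then shows $F$ is a $\Z[\pi_1(M)]$-homology equivalence, recovering the claim from Section~\ref{sec:intro} that local knotting preserves the $\Z[\pi_1(M)]$-homology type of the exterior. The $\pi_1(M)$-version of the Stallings--Dwyer theorem already invoked in Section~\ref{sec:intro} delivers $\overline{F}_n$ for all $n$, and compatibility with $F_\partial$ is automatic because $F$ itself restricts to $F_\partial$ on the boundary. Thus $\phi=(\overline{F}_n,F_\partial)$ is a legitimate $n$-basing for $K'$ relative to $K$.

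Finally, $F$ assembles into a morphism of diagrams $\mathcal{D}_n(K')\to\mathcal{D}_n(K)$ over $\pi_1(M)$ with components $F$, $B\overline{F}_n$, and $\id_M$: the left square commutes by naturality of the classifying map, and the right square commutes up to the homotopy $j\circ F\simeq j'$ constructed above. This morphism induces a based homotopy equivalence of pairs $\overline{h}_\phi:(X_n(K'),\nu K')\xrightarrow{\simeq}(X_n(K),\nu K)$ realizing $\phi$, and $\overline{h}_\phi\circ\iota_n(K')\simeq\iota_n(K)$ because the $M$-component of the diagram morphism is $\id_M$. Hence $h_n(K',\phi)=h_n(K,\id)\in[M,X_n(K)]_0$, and passing to the orbit under $\Aut(\pi/\Gamma_n,\partial)$ yields $h_n(K')=h_n(K)$. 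The central technical input is the $\Z[\pi_1(M)]$-version of Stallings--Dwyer, already cited by the paper; the remaining work is bookkeeping to align the conventions for $n$-basings, diagram morphisms, and boundary identifications.
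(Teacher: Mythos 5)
Your overall strategy is the same as the paper's: build a map $F:E_{K'}\to E_K$ that is the identity away from the local-knotting ball and a degree-one tangle-exterior map inside it, check it is a $\Z[\pi_1(M)]$-homology equivalence to get $n$-basings via Stallings--Dwyer, and then exhibit the homotopy $h_n(K',\phi)\simeq h_n(K,\id)$ from a homotopy-commutative morphism of the defining pushout diagrams with $\id_M$ on the $M$-corner. Two details are off as written, though neither is fatal.

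First, ``the arcs $J_0$ and $\alpha$ are properly homotopic rel $\partial B$, and a standard thickening produces a smooth map of pairs $f:(B,J_0)\to(B,\alpha)$'' does not actually produce $f$. A homotopy of arcs is a map $J_0\times I\to B$, not a map $B\to B$, and there is no canonical way to thicken it into a map of pairs that is a diffeomorphism on tubular neighborhoods; that is precisely the homotopy-versus-isotopy gap. The correct construction is the paper's: fix an admissible diffeomorphism $\partial\nu J_0\to\partial\nu\alpha$ (sending meridian to meridian and fixing $\partial B$), extend it to a map $E_{J_0\subset B}\to E_{\alpha\subset B}$ rel $\partial B$ by obstruction theory using that $E_{\alpha\subset B}$ is a solid torus and hence a $K(\Z,1)$, and then glue with the diffeomorphism of neighborhoods. (This is the local form of the paper's map $g:S^3\to S^3$.)

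Second, the claim that $j\circ F\simeq j'$ \emph{rel} $\partial E_{K'}$ is too strong. On the portion of $\partial\nu K'$ lying inside $B$, the map $j'$ is the inclusion of $\partial\nu J_0$ into $M$ while $j\circ F$ sends it to $\partial\nu\alpha$, which is a different torus; the two maps therefore disagree on part of $\partial E_{K'}$, so they cannot be homotopic rel $\partial E_{K'}$. What your contractibility argument actually gives is a homotopy rel $E_{K'}\setminus\mathring{B}$, and in particular a based homotopy (the basepoint of $M$ lies on $\partial\nu K=\partial\nu K'$ outside $B$). That weaker statement is all you need for the morphism of diagrams to commute up to (based) homotopy and hence for $\overline{h}_\phi\circ\iota_n(K')\simeq\iota_n(K)$. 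For comparison, the paper gets the analogous fact globally by observing that the degree-one map $g:S^3\to S^3$ is homotopic to the identity, so $f=g\,\#\,\id_M\simeq\id_M$; your contractible-ball argument is a valid local substitute for that Hopf-theorem input, so long as you do not over-assert the rel-boundary condition.
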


\begin{proof}
We first define a map of pairs $(E_{K'},\partial\nu K')\to(E_K,\partial\nu K)$ which restricts on the boundary to an admissible homeomorphism and which induces an isomorphism on $H_*(-;\Z[\pi_1(M)])$. 
First, define a degree one map $g:S^3\to S^3$ which sends $J$ homeomorphically to the unknot $U$ by extending a map $E_J\to E_U$ which maps $\partial\nu J\xrightarrow{\cong}\partial\nu U$ by an admissible homeomorphism.
An obstruction theory argument implies such a map exists since $E_U$ is a $K(\Z,1)$. 
Define $f:M\to M$ by $f=g\,\#\id_M$, where we view $(M,K')$ as a connected sum of pairs $(M,K')=(S^3,J)\,\#\,(M,K)$. 
This map sends $K'$ homeomorphically to $K$. 
By restricting to $E_{K'}$, we obtain the desired map of pairs. 
A Mayer-Vietoris argument shows that $f|_{E_{K'}}$ induces an isomorphism on $H_*(-;\Z[\pi_1(M)])$.
By the Stallings-Dwyer theorem, $f|_{E_{K'}}$ induces isomorphisms on $\pi_1(M)$-lower central quotients $\pi'/\Gamma'_n\xrightarrow{\cong}\pi/\Gamma_n$ for all $n$. 
Thus, for each $n$, $K'$ admits an $n$-basing $\phi_n$ relative to $K$, so $h_n(K',\phi_n)$ and $h_n(K')$ are defined for each $n$.

We now show $h_n(K',\phi_n)=h_n(K,\id)$ for all $n$. 
Since $g:S^3\to S^3$ is a degree 1 map, $g\simeq\id_{S^3}$. 
Thus, $f=g\#\id_M\simeq\id_{S^3}\#\id_M=\id_M$. 
Let $H:M\times[0,1]\to M$ be a homotopy from $\id_M$ to $f$.
Postcomposing with the map $\iota_n(K):M\hookrightarrow X_n(K)$ yields a homotopy from $h_n(K,\id)=\iota_n(K)$ to $\iota_n(K)\circ f$. 
It remains to see that $\iota_n(K)\circ f\simeq h_n(K',\phi_n)$. 
By construction, and by the fact that $f$ induces the $n$-basing $\phi_n$, we have the homotopy-commutative cube seen in Figure~\ref{fig:htpycube}.
\begin{figure}
\begin{center}
\begin{tikzcd}
E_{K'} \arrow[rr,"f|_{E_{K'}}"] \arrow[dr,hookrightarrow] \arrow[dd,hookrightarrow] & & E_K \arrow[dr,hookrightarrow] \arrow[dd,hookrightarrow]\\ 
& M_{p_n'}^\times \arrow[rr, crossing over,"\simeq" near start,"h_{\phi_n}"' near start] & & M_{p_n}^\times \\
M \arrow[dr,hookrightarrow,"\iota_n(K')"'] \arrow[rr,"f" near start] & & M \arrow[dr, hookrightarrow,"\iota_n(K)"] \\
& X_n(K') \arrow[from=uu, crossing over] \arrow[rr,"\simeq","\ol{h}_{\phi_n}"'] & & X_n(K). \arrow[from=uu, hookrightarrow]
\end{tikzcd}
\caption{The homotopy-commutative cube from the proof of Proposition~\ref{prop:localknot}.}
\label{fig:htpycube}
\end{center}
\end{figure}
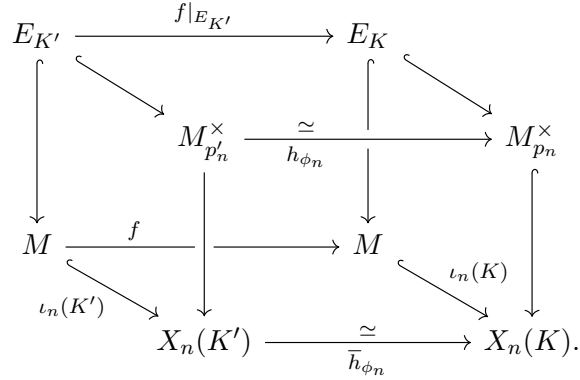
The bottom face of the cube shows that $h_n(K',\phi_n)=\ol{h}_{\phi_n}\circ\iota_n(K')\simeq\iota_n(K)\circ f$. Thus, $h_n(K',\phi_n)=h_n(K,\id)$, and $h_n(K')=h_n(K)$.
\end{proof}

\begin{corollary} \label{cor:acinv}
The invariants $h_n$, $\theta_n$, and $\ol{\mu}_n$ are almost-concordance invariants.
\end{corollary}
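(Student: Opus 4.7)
The plan is to observe that almost-concordance is the equivalence relation on $\mathcal{C}^M$ generated by concordance together with the local-knotting action of $\mathcal{C}$, so any invariant preserved by both operations automatically descends to $\wt{\mathcal{C}}^M = \mathcal{C}^M/\mathcal{C}$. The two ingredients at hand are the concordance invariance of $h_n$ recorded in the ``Invariance'' bullet summarizing \cite{Stees24} and the local-knotting invariance just proved in Proposition~\ref{prop:localknot}.

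Concretely, suppose $K$ and $K'$ are almost-concordant knots in $M$. Unpacking $\wt{\mathcal{C}}^M$ as the orbit set of the $\mathcal{C}$-action on $\mathcal{C}^M$, there exists some $J\subset S^3$ with $J\cdot K$ concordant to $K'$. I would first apply Proposition~\ref{prop:localknot} to obtain $h_n(J\cdot K)=h_n(K)$ for every $n$, then apply the concordance invariance of $h_n$ to obtain $h_n(J\cdot K)=h_n(K')$, and chain these to conclude $h_n(K)=h_n(K')$. The corresponding statements for $\theta_n$ and $\ol{\mu}_n$ follow automatically, since $\theta_n$ is defined as the orbit of $h_n(-,\phi)_*[M]$ in $H_3(X_n(L))/\Aut(\pi/\Gamma_n,\partial)$ and $\ol{\mu}_n$ is a further reduction of $h_n$; both are functions of the data encoded in $h_n$.

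There is essentially no obstacle: the corollary is a clean bookkeeping consequence of Proposition~\ref{prop:localknot} combined with the summary of \cite{Stees24}. The only minor check worth making explicit is that passing from the based representatives $h_n(L',\phi)$ to the unbased orbit-set invariant $h_n(L')$ respects the chain of equalities, which is immediate because Proposition~\ref{prop:localknot} already establishes equality at the level of chosen based representatives and this descends to the orbit space under $\Aut(\pi/\Gamma_n,\partial)$.
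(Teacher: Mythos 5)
Your proof is correct and is exactly the intended argument: the paper states the corollary without an explicit proof precisely because it follows immediately from Proposition~\ref{prop:localknot} (local-knotting invariance) combined with the concordance invariance of $h_n$ summarized from \cite{Stees24}, with $\theta_n$ and $\ol{\mu}_n$ inheriting the result because they are determined by $h_n$. Your unpacking of almost-concordance as the orbit relation under the $\mathcal{C}$-action, and the resulting chain $h_n(K)=h_n(J\cdot K)=h_n(K')$, is the same bookkeeping the paper leaves implicit.
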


\noindent Note that Proposition~\ref{prop:localknot} holds more generally for links, where we allow local knotting on all components.


\section{Algebraic properties of knots in 3-manifolds} \label{sec:algprop}

Before producing families of knots which we distinguish up to almost-concordance using the lower central homology invariants $\theta_n$, we analyze some relevant algebraic properties of knots in 3-manifolds. In Section~\ref{subsec:algprop:lcq}, we analyze the $\pi_1(M)$-lower central quotients of homotopically essential knots in aspherical 3-manifolds. In Section~\ref{subsec:algprop:thetapair}, we prove some general statements about $\theta$-invariants of knots in 3-manifolds and do not place these restrictions on the 3-manifold or class of the knot.


\subsection{The $\pi_1(M)$-lower central quotients} \label{subsec:algprop:lcq}

When $M$ is aspherical and $[K]\in[S^1,M]$ is nontrivial, the lower central quotients $\Gamma/\Gamma_n$ admit a particular form that allows us to analyze $\theta$-invariants relative to $K$ using tools from the theory of Milnor's invariants of links in $S^3$ \cites{Milnor,Orr89,Cochran90,Cochran91}.
We prove that, in this setting, $\Gamma/\Gamma_n$ is isomorphic to the $n\upth$ lower central quotient of a free group. 
The $\pi_1(M)$-lower central quotients are extensions of these quotients by $\pi_1(M)$.

\begin{proposition} \label{prop:finfty}
Let $K\subset M$ be a homotopically essential knot in the aspherical 3-manifold $M$. 
Then \[\Gamma/\Gamma_n\cong F(\infty)/F(\infty)_n\] for all $n$, where $F(\infty)$ is a countably infinitely generated free group whose generators correspond to (left) cosets of $\langle[K]\rangle\leq\pi_1(M)$ or, equivalently, meridians of $p^{-1}(K)$, where $p:\wt{M}\to M$ is the universal covering map.
\end{proposition}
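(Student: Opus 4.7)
The plan is to identify $\Gamma$ with $\pi_1(\wt{E_K})$, where $\wt{E_K}:=p^{-1}(E_K)$, and then apply the Stallings-Dwyer theorem to a natural map $F(\infty)\to\Gamma$. For this setup to work, I first need $p^{-1}(K)$ to be a disjoint union of lines indexed by the left cosets $\pi_1(M)/\langle[K]\rangle$, which requires $[K]$ to have infinite order in $\pi_1(M)$. Since $M$ is closed and aspherical, $\pi_1(M)$ acts freely on the contractible universal cover $\wt{M}$, so Smith theory (no nontrivial finite group acts freely on a contractible finite-dimensional space) forces $\pi_1(M)$ to be torsion-free; hence the nontrivial $[K]$ has infinite order, and each component of $p^{-1}(K)$ is a copy of $\R$ whose stabilizer is a conjugate of $\langle[K]\rangle$.

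The next step is a Mayer-Vietoris calculation on the decomposition $\wt{M}=\wt{E_K}\cup p^{-1}(\nu K)$, where I shrink $\nu K$ if necessary so that the components of $p^{-1}(\nu K)$ are disjoint open tubular neighborhoods. Since $\wt{M}$ is contractible, $p^{-1}(\nu K)$ is a disjoint union of open solid cylinders $D^2\times\R$ (hence has trivial reduced homology), and $p^{-1}(\partial\nu K)$ is a disjoint union of cylinders $S^1\times\R$ whose $H_1$ is free abelian on meridians indexed by cosets of $\langle[K]\rangle$. The sequence then collapses to give $H_1(\wt{E_K})\cong\bigoplus_\alpha\Z\langle m_\alpha\rangle$ and $H_2(\wt{E_K})=0$.

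Finally, I would define $f:F(\infty)\to\Gamma$ by sending the free generator associated to a coset $\alpha$ to the chosen meridian $m_\alpha\in\pi_1(\wt{E_K})=\Gamma$. Combining the Mayer-Vietoris output with Hopf's theorem (so that $H_1(\Gamma)=H_1(\wt{E_K})$ and $H_2(\Gamma)$ is a quotient of $H_2(\wt{E_K})=0$), $f$ induces an isomorphism on $H_1$ and a surjection (trivially an isomorphism) on $H_2$. The Stallings-Dwyer theorem then immediately delivers the desired isomorphisms $F(\infty)/F(\infty)_n\xrightarrow{\cong}\Gamma/\Gamma_n$ for all $n$.

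The step most deserving of care is matching the free generators of $H_1(\wt{E_K})$ produced by the Mayer-Vietoris boundary map with the meridians used to define $f$, so that $f$ is genuinely compatible with the identifications on $H_1$; once the geometric picture (disjoint lifted tubes, each meridian surrounding one lift) is arranged, this reduces to a routine check. The torsion-freeness input is essential, as otherwise components of $p^{-1}(K)$ could be circles and the Mayer-Vietoris tally of meridians would change, but this is the only place asphericity of $M$ plays a nontrivial role beyond furnishing a contractible $\wt{M}$.
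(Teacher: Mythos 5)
Your proposal is correct and follows essentially the same route as the paper: a Mayer--Vietoris computation in the universal cover (equivalently, with $\Z[\pi_1(M)]$ coefficients) showing $H_2(\wt{E_K})=0$ and that $H_1(\wt{E_K})$ is free abelian on meridians indexed by cosets of $\langle[K]\rangle$, followed by Hopf's theorem and Stallings--Dwyer applied to the natural map $F(\infty)\to\Gamma$. The one point where you are more explicit than the paper is the invocation of torsion-freeness of $\pi_1(M)$ (so that $[K]$ has infinite order and each lift of $K$ is a line rather than a circle) -- the paper uses this implicitly when asserting $H_1(\nu K;\Z[\pi_1(M)])=0$; your remark that this is the only place asphericity enters beyond contractibility of $\wt{M}$ is accurate and a good catch.
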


\begin{proof}
Let $\wt{X}$ denote the preimage of a space $X\subset M$ under the universal covering $p:\wt{M}\to M$. 
We analyze the Mayer-Vietoris sequence with coefficients in $\Z[\pi_1(M)]$ corresponding to the decomposition $M=E_K\cup_{\partial\nu K}\nu K$. 
Because $M$ is aspherical, $H_i(M;\Z[\pi_1(M)])=0$ for all $i>0$. 
Since $K$ is homotopically essential, $H_1(\nu K;\Z[\pi_1(M)])=0$ and $H_2(\partial\nu K;\Z[\pi_1(M)])=0$. 
The Mayer-Vietoris sequence therefore yields isomorphisms \[H_1(\partial\nu K;\Z[\pi_1(M)])\xrightarrow{\cong}H_1(E_K;\Z[\pi_1(M)])\]  and \[0=H_2(\partial\nu K;\Z[\pi_1(M)])\xrightarrow{\cong}H_2(E_K;\Z[\pi_1(M)])=0.\] 
The nontriviality of $[K]\in[S^1,M]$ also implies $H_1(\partial\nu K;\Z[\pi_1(M)])=H_1(\mu;\Z[\pi_1(M)])$, where $\mu$ is the meridian for $K$. 
Thus, the inclusion $\mu\hookrightarrow E_K$ induces isomorphisms on $H_i(-;\Z[\pi_1(M)])$ for $i=1,2$. 
Equivalently, the inclusion $\wt{\mu}\hookrightarrow\wt{E_K}$ induces isomorphisms on $H_i(-;\Z)$ for $i=1,2$. 
The space $\wt{\mu}$ consists of meridians for the components of $\wt{K}$ which are in (non-canonical) bijective correspondence with (left) cosets of $\langle[K]\rangle\leq\pi_1(M)$. 
We may turn $\wt{\mu}$ into a connected space by connecting the meridians to the basepoint of $\wt{E_K}$ by pairwise disjoint paths in $\wt{E_K}$. 
Note that the deck transformations on $\wt{M}$ act properly discontinuously on the meridians, so resulting space, which we still call $\wt{\mu}$, is a countably infinite wedge of circles with fundamental group $F(\infty)$ and generators corresponding to cosets of $\langle[K]\rangle\leq\pi_1(M)$. 
This modification does not alter the homology isomorphisms induced by the inclusion $\wt{\mu}\hookrightarrow\wt{E_K}$. 
As $\wt{\mu}$ is a $K(F(\infty),1)$ and \[0=H_2(\wt{\mu};\Z)\xrightarrow{\cong}H_2(\wt{E_K};\Z)\twoheadrightarrow H_2(\pi_1(\wt{E_K});\Z)=H_2(\Gamma;\Z),\] where the surjection follows from Hopf's Theorem (see \cite{Brown} Section II.5), we have isomorphisms $H_i(F(\infty);\Z)\xrightarrow{\cong}H_i(\Gamma;\Z)$ for $i=1,2$.
By the Stallings-Dwyer Theorem \cites{Stallings,Dwyer}, we obtain isomorphisms on lower central quotients $F(\infty)/F(\infty)_n\xrightarrow{\cong}\Gamma/\Gamma_n$.
\end{proof}

In Section~\ref{subsec:construct:Gamma}, we use finitely generated subgroups $F(\mu_\mathbf{g})/F(\mu_\mathbf{g})_n\leq F(\infty)/F(\infty)_n$, where the $F(\mu_\mathbf{g})=\langle\mu_{g_1},\dots\mu_{g_m}\rangle$ are free groups generated by finitely many generators of $F(\infty)$, as input data for constructing knots which are not almost-concordant to the fixed knot $K$.

\subsection{The space $X_n(K)$ and $\theta$-invariants for knots in 3-manifolds} \label{subsec:algprop:thetapair}

Once we construct families of knots which are candidates to satisfy Theorem~\ref{thm:family} in Section~\ref{subsec:construct:Gamma}, we analyze the differences of their $\theta$-invariants with the fixed class $\theta_n(K)\in H_3(X_n(K))/\Aut(\pi/\Gamma_n,\partial)$ in Section~\ref{subsec:construct:S3}.
In this section, we show that for a knot $K\subset M$ the action of the group of self-$n$-basings $\Aut(\pi/\Gamma_n,\partial)$ on $H_3(X_n(K))$ is trivial, and thus it is enough to understand these differences in $H_3(X_n(K))$.
More precisely, Proposition~\ref{prop:thetapair} and Corollary~\ref{cor:thetapair} below assert the lower central homology invariant $\theta_n(K',\phi)$ of the pair $(K',\phi)$ is independent of the choice of $n$-basing $\phi$ for $K'$ relative to $K$.
Thus, if $\theta_n(K',\phi')\neq\theta_n(K'',\phi'')\in H_3(X_n(K))$ \emph{for any choices of $\phi'$ and $\phi''$}, $K'$ and $K''$ are not almost-concordant. 
This fact does not generalize to links.
Indeed, for links in $S^3$ the invariant $\theta_n$ depends on the choice of $n$-basing; see \cites{Orr89,IgusaOrr}.
In this subsection, we do not assume that $M$ is aspherical or that $[K]\in [S^1,M]$ is nontrivial.

\begin{proposition} \label{prop:thetapair}
Fix $K\subset M$, and suppose $K',K''\subset M$ admit $n$-basings $\phi'$ and $\phi''$, respectively, relative to $K$. Then $\theta_n(K')=\theta_n(K'')$ if and only if $\theta_n(K',\phi')=\theta_n(K'',\phi'')$. In particular, $\theta_n(K')$ vanishes if and only if $\theta_n(K',\phi')$ vanishes.
\end{proposition}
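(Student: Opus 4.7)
The ``if'' direction is immediate, since equality in $H_3(X_n(K))$ descends to equality in the orbit space $H_3(X_n(K))/\Aut(\pi/\Gamma_n,\partial)$. My plan for the ``only if'' direction is to prove the stronger statement that the group $\Aut(\pi/\Gamma_n,\partial)$ acts trivially on $H_3(X_n(K))$, from which the proposition follows immediately.

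To compute $H_3(X_n(K))$ and the induced action, I would first use the decomposition $X_n(K)=M_{p_n}^\times\cup_{\partial E_K}\nu K$. The long exact sequence of the pair $(X_n(K),\nu K)$ together with $H_2(\nu K)=H_3(\nu K)=0$ gives $H_3(X_n(K))\cong H_3(X_n(K),\nu K)$, which by excision is $H_3(M_{p_n}^\times,\partial E_K)\cong H_3(K(\pi/\Gamma_n,1),\partial E_K)$. The long exact sequence of the latter pair yields a short exact sequence
\[
0\to H_3(\pi/\Gamma_n)\to H_3(K(\pi/\Gamma_n,1),\partial E_K)\to\ker\bigl(H_2(\partial E_K)\to H_2(\pi/\Gamma_n)\bigr)\to 0.
\]

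For a self-$n$-basing $\psi=(\phi,\phi_\partial)$, the induced self-equivalence $\bar\psi$ of $X_n(K)$ respects this decomposition. On the kernel summand, which sits inside $H_2(T^2)=\Z$, triviality follows from the fact that $\phi_\partial$ is admissible, hence orientation-preserving, and therefore fixes $[T^2]$. On the $H_3(\pi/\Gamma_n)$ summand, the induced map is $\phi_*$, and I would argue this is the identity by showing that the constraints on a self-$n$-basing of a knot---namely, $\phi$ covers the identity on $\pi_1(M)$ and, via some basing, is compatible with $\phi_\partial$ preserving the meridian---force $\phi$ to be conjugation by an element of $\pi/\Gamma_n$ lying over $\id\in\pi_1(M)$. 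Inner automorphisms act trivially on group homology, yielding $\phi_*=\id$.

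The main obstacle will be the last step: reducing self-$n$-basings of a knot to inner automorphisms. The key input is that, for a knot, $\Gamma/\Gamma_n$ is normally generated in $\pi/\Gamma_n$ by a single meridian. Combined with $\pi_1(M)$-equivariance and the conjugation ambiguity introduced by the choice of basing, this should pin down $\phi$ up to an inner automorphism. The analogous argument fails for links, where multiple non-conjugate meridians leave genuine latitude in choosing $\phi$, explaining why $\theta_n$ exhibits real basing-dependence for links in $S^3$ as in \cites{Orr89,IgusaOrr}.
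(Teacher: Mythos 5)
Your ``if'' direction is fine, and your instinct to study the action of $\Aut(\pi/\Gamma_n,\partial)$ on $H_3(X_n(K))$ via the Mayer--Vietoris/excision decomposition is the same general strategy as the paper (your short exact sequence is, after identifying the $\Z$ quotient with $H_3(M)$ via the isomorphism $H_3(M)\xrightarrow{\cong}H_2(\partial\nu K)$, precisely the content of the paper's Lemma~\ref{lem:H3}). However, you aim to prove a strictly stronger statement than the proposition requires---that $\Aut(\pi/\Gamma_n,\partial)$ acts trivially on all of $H_3(X_n(K))$---and the step you flag as ``the main obstacle'' is a genuine gap, not a routine verification. You would need to show that every self-$n$-basing $\phi$ of $\pi/\Gamma_n$ is inner, but the constraints you list do not obviously force this. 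An automorphism over $\pi_1(M)$ is classified (up to inner) by a class in $H^1(\pi_1(M);\Gamma/\Gamma_n)$-type data, which need not vanish; composing with conjugation to normalize $\phi(\mu)=\mu$ generally destroys the ``over $\pi_1(M)$'' condition unless the conjugating element lies over a central element of $\pi_1(M)$, which for aspherical $M$ is typically only the identity. Moreover, even granting $\phi_*=\id$ on $H_3(\pi/\Gamma_n)$, your decomposition is only a short exact sequence, so you would still need to rule out a possible ``cross term'' (an extension not split equivariantly under $\bar\psi$), which your argument does not address.

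The paper avoids all of this. It does not claim the action is trivial on $H_3(X_n(K))$---in fact it makes no assertion that $\phi_*=\id$ on $H_3(\pi/\Gamma_n)$. Instead, Lemma~\ref{lem:H3} establishes both the splitting $H_3(X_n(K))\cong H_3(\pi/\Gamma_n)\oplus H_3(M)$ and, crucially, that every class realized by a knot has the form $(\eta,[M])$. Since the action of a self-$n$-basing preserves the $H_3(\pi/\Gamma_n)$ summand (it restricts to a self-equivalence of $M_{p_n}^\times$) and carries realizable classes to realizable classes, and since $\theta_n(K,\id)=\iota_n(K)_*[M]=(0,[M])$, the only fact needed is that any automorphism of $H_3(\pi/\Gamma_n)$ sends $0$ to $0$. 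That gives $\psi\cdot\theta_n(K,\id)=\theta_n(K,\id)$ for every self-$n$-basing $\psi$, which is exactly what is needed; the general case of two knots $K',K''$ then reduces to this by composing basings. So the paper's route requires no structural understanding of $\Aut(\pi/\Gamma_n,\partial)$ beyond the fact that it acts by automorphisms, whereas yours hinges on a classification of self-$n$-basings that is neither proved nor evidently true. To salvage your approach you would either have to prove the inner-automorphism claim (and also handle the cross term), or, better, retreat to the weaker and sufficient claim that the action fixes the distinguished class $(0,[M])$, which is where the realizability input from Lemma~\ref{lem:H3} becomes indispensable.
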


\begin{corollary} \label{cor:thetapair}
Fix $K\subset M$, and suppose $\theta_n(K')$ is defined. Then $\theta_n(K',\phi')=\theta_n(K',\phi'')$ for any $n$-basings $\phi'$ and $\phi''$.
\end{corollary}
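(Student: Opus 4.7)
The plan is to obtain Corollary~\ref{cor:thetapair} as an immediate consequence of Proposition~\ref{prop:thetapair}. Given a fixed $K\subset M$ and two $n$-basings $\phi',\phi''$ of $K'$ relative to $K$, I would take $K''=K'$ in the proposition and use $\phi''$ as the relevant $n$-basing for $K''$. Then the equality $\theta_n(K')=\theta_n(K'')$ in the orbit set $H_3(X_n(K))/\Aut(\pi/\Gamma_n,\partial)$ holds tautologically, since both sides literally refer to the same class. Applying the ``only if'' direction of Proposition~\ref{prop:thetapair}, we conclude $\theta_n(K',\phi')=\theta_n(K',\phi'')\in H_3(X_n(K))$, which is exactly the content of the corollary.

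There is no further technical step required for the corollary itself: it is a one-line specialization of the proposition. Consequently, the real content and the main obstacle live in Proposition~\ref{prop:thetapair}. The nontrivial direction there amounts to showing that the $\Aut(\pi/\Gamma_n,\partial)$-action is effectively trivial on the pair-invariant classes $\theta_n(-,\phi)\in H_3(X_n(K))$ in the knot case. The phenomenon that genuinely distinguishes knots from links here --- and the reason the corollary fails for multi-component links, as the paper explicitly warns --- is that for a single boundary torus the admissible self-homeomorphisms preserving the meridian and orientation are extremely constrained, so the induced self-homotopy equivalences of the pair $(X_n(K),\nu K)$ act trivially on the fundamental-class image in $H_3$. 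Once the proposition has been established with this ingredient, invoking it with $K''=K'$ delivers the corollary with no additional argument.
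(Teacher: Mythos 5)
Your proof is correct and matches the paper's argument exactly: specialize Proposition~\ref{prop:thetapair} to $K''=K'$, note that $\theta_n(K')=\theta_n(K'')$ holds tautologically, and apply the forward implication. The extra commentary on why the proposition's mechanism is special to knots is accurate but not needed for the corollary itself.
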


\begin{proof}
Take $K'=K''$ and let $\phi'$ and $\phi''$ be any two $n$-basings for $K'$ relative to $K$ in the statement of Proposition~\ref{prop:thetapair}.
\end{proof}

Before we prove Proposition~\ref{prop:thetapair}, we first recall our setup.
Fix a knot $K\subset M$. 
Recall from Section~\ref{sec:Milnor} that $X_n(K)=M_{p_n}^\times\cup_{E_K}M=M_{p_n}^\times\cup_{\partial E_K}\nu K$, where $M_{p_n}^\times$ is the reduced mapping cylinder of $p_n:E_K\to K(\pi/\Gamma_n,1)$.
Given a knot $K'\subset M$ which admits an $n$-basing $\phi$ relative to $K$, we have $\theta_n(K')=[\theta_n(K',\phi)]\in H_3(X_n(K))/\Aut(\pi/\Gamma_n,\partial)$, where $\theta_n(K',\phi)=h_n(K',\phi)_*[M]$, and where $h_n(K',\phi)$ is the homotopy class of maps $M\to X_n(K)$ induced by the $n$-basing whose existence is guaranteed by Proposition~\ref{prop:nbasing}.

Next, we analyze the group $H_3(X_n(K))$.

\begin{lemma} \label{lem:H3}
The Mayer-Vietoris sequence corresponding to the decomposition $X_n(K)=M_{p_n}^\times\cup_{E_K}M$ induces an isomorphism \[H_3(\pi/\Gamma_n)\oplus H_3(M)\xrightarrow{\cong} H_3(X_n(K)).\] Furthermore, if $K'\subset M$ admits an $n$-basing $\phi$ relative to $K$, then $\theta_n(K',\phi)\in H_3(X_n(K))$ corresponds under this isomorphism to a class of the form $(\eta,[M])$ for some $\eta\in H_3(\pi/\Gamma_n)$.
\end{lemma}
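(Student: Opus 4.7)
The plan is to derive the isomorphism from the Mayer--Vietoris sequence for $X_n(K) = M_{p_n}^\times \cup_{E_K} M$ and then identify $\theta_n(K',\phi)$ by tracking it through the quotient $X_n(K) \twoheadrightarrow X_n(K)/K(\pi/\Gamma_n,1)$. Since $E_K$ is a compact 3-manifold with nonempty boundary, $H_3(E_K) = 0$, so the relevant portion of Mayer--Vietoris is
\[ 0 \to H_3(\pi/\Gamma_n) \oplus H_3(M) \xrightarrow{\Phi} H_3(X_n(K)) \xrightarrow{\partial} H_2(E_K) \to H_2(\pi/\Gamma_n) \oplus H_2(M), \]
and it remains to show $\partial = 0$ and then to locate $\theta_n(K',\phi)$ under the resulting decomposition.

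To see $\partial = 0$, I would reinterpret it through the homotopy cofiber sequence $K(\pi/\Gamma_n,1) \to X_n(K) \to X_n(K)/K(\pi/\Gamma_n,1)$, whose cofiber is $M/E_K \simeq \nu K/\partial\nu K \simeq S^2 \vee S^3$ by the pushout identity. The natural map of cofiber sequences
\begin{center}
\begin{tikzcd}[column sep=small]
E_K \ar[r] \ar[d, "p_n"'] & M \ar[r] \ar[d, "\iota_n(K)"] & M/E_K \ar[d, "\id"] \\
K(\pi/\Gamma_n,1) \ar[r] & X_n(K) \ar[r] & M/E_K
\end{tikzcd}
\end{center}
gives by naturality that the connecting map $H_3(M/E_K) \to H_2(\pi/\Gamma_n)$ factors through the connecting map $\partial': H_3(M/E_K) \to H_2(E_K)$ of the top row. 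But $\partial'$ vanishes, because in the long exact sequence of $(M, E_K)$ the map $H_3(M) \to H_3(M, E_K) \cong H_3(\nu K, \partial\nu K) \cong \mathbb{Z}$ sends $[M]$ to the generator. Combined with $H_4(M/E_K) = 0$, this yields
\[ 0 \to H_3(\pi/\Gamma_n) \to H_3(X_n(K)) \to H_3(M/E_K) \to 0, \]
split by $\iota_n(K)_*: H_3(M) \to H_3(X_n(K))$ since $H_3(M) \to H_3(M/E_K)$ is the isomorphism above. Identifying $H_3(M) \cong H_3(M/E_K)$, this gives $H_3(X_n(K)) \cong H_3(\pi/\Gamma_n) \oplus H_3(M)$, and $\iota_n(K)_*[M]$ corresponds to $(0, [M])$.

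For the second assertion, write $\theta_n(K',\phi) = (\ol{h}_\phi \circ \iota_n(K'))_*[M]$. By Proposition~\ref{prop:nbasing}, $\ol{h}_\phi$ is a homotopy equivalence of pairs $(X_n(K'), \nu K') \to (X_n(K), \nu K)$, so it restricts to a homotopy equivalence on complements $M_{p_n'}^\times \to M_{p_n}^\times$ and therefore descends to an equivalence $M/E_{K'} \to M/E_K$ on quotients. Admissibility of $\phi_\partial$ ensures this quotient equivalence has degree $+1$ on the $S^3$-summand, since the equivalence of pairs $(\nu K', \partial\nu K') \to (\nu K, \partial\nu K)$ preserves the fundamental class induced by the orientation of $M$. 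Therefore the composition $M \xrightarrow{\iota_n(K')} X_n(K') \xrightarrow{\ol{h}_\phi} X_n(K) \twoheadrightarrow M/E_K$ sends $[M]$ to the generator of $H_3(M/E_K)$, so $\theta_n(K',\phi)$ projects to $[M]$ under the splitting, giving $\theta_n(K',\phi) = (\eta, [M])$ for some $\eta \in H_3(\pi/\Gamma_n)$.

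The main obstacle is the vanishing of $\partial$. After reducing this (via naturality) to the elementary computation that $H_3(M) \to H_3(M, E_K) \cong \mathbb{Z}$ is an isomorphism, the remaining work is orientation bookkeeping: confirming that the collapse $X_n(K) \to M/E_K$ detects $[M]$ rather than $-[M]$ in the second coordinate of $\theta_n(K',\phi)$, which is forced by the admissibility built into $\phi_\partial$.
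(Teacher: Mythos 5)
Your proof is correct and takes a cleaner, partially different route than the paper's. For the isomorphism, the paper runs two Mayer--Vietoris sequences (first for $M = E_K \cup_{\partial\nu K}\nu K$ to show $H_2(E_K)\hookrightarrow H_2(M)$ is injective, then for $X_n(K) = M_{p_n}^\times\cup_{E_K} M$). You package the same underlying fact as surjectivity of $H_3(M)\to H_3(M,E_K)$, feed it through the excision identification $H_*\bigl(X_n(K),K(\pi/\Gamma_n,1)\bigr)\cong H_*(M,E_K)$, and obtain the split short exact sequence in one step; the two viewpoints are equivalent by exactness of the pair sequence for $(M,E_K)$, so this part is a repackaging rather than a new idea, though an economical one. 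The genuinely different step is the second assertion: the paper cites Theorem C of \cite{Stees24} (a realization result asserting that realized classes in $H_3(X_n(K))$ map to $[\partial\nu K]$), whereas you compute the image of $\theta_n(K',\phi)$ under the collapse $X_n(K)\to M/E_K$ directly, by comparing the composites $X_n(K')\to X_n(K)\to M/E_K$ and $X_n(K')\to M/E_{K'}\to M/E_K$ and observing that $\ol{h}_\phi$ descends to a degree-one map on cofibers. This self-contained computation is a genuine improvement: it proves the needed fact for $\theta_n(K',\phi)$ without invoking the heavier realization theorem. The one step worth spelling out is the degree-one claim: since $\phi_\partial$ is only a map on $H_1(\partial\nu K')$, you should note that an admissible homeomorphism of $\partial\nu K'$ extends to an orientation-preserving homeomorphism of the pair $(\nu K',\partial\nu K')\to(\nu K,\partial\nu K)$ because the boundary orientation of a solid torus (as a codimension-zero submanifold of the oriented $M$) determines the orientation of the solid torus, and it is this that forces $[\nu K',\partial\nu K']\mapsto[\nu K,\partial\nu K]$ on $H_3$.
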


\begin{proof}
First consider the Mayer-Vietoris sequence corresponding to $M=E_K\cup_{\partial\nu K}\nu K$.
Observe that $H_3(E_K)=H_3(\nu K)=H_2(\nu K)=0$, so we obtain the following sequence.
\[0\to H_3(M)\to H_2(\partial\nu K)\to H_2(E_K)\to H_2(M)\to\cdots\]
Analysis of the connecting homomorphism $H_3(M)\to H_2(\partial\nu K)$ shows that it is an isomorphism.
Thus, $H_2(E_K)\to H_2(M)$ is injective.

Next consider the Mayer-Vietoris sequence corresponding to $X_n(K)=M_{p_n}^\times\cup_{E_K}M$.
Note that $M_{p_n}^\times$ is a $K(\pi/\Gamma_n,1)$, so $H_*(M_{p_n}^\times)=H_*(\pi/\Gamma_n)$.
Using $H_3(E_K)=0$ again, we obtain
\[0\to H_3(\pi/\Gamma_n)\oplus H_3(M)\to H_3(X_n(K))\to H_2(E_K)\to H_2(\pi/\Gamma_n)\oplus H_2(M)\to\cdots\]
By the injectivity of the inclusion-induced homomorphism $H_2(E_K)\hookrightarrow H_2(M)$, the homomorphism $H_2(E_K)\to H_2(\pi/\Gamma_n)\oplus H_2(M)$ is also injective.
Thus, the inclusions $M_{p_n}^\times\hookrightarrow X_n(K)$ and $M\hookrightarrow X_n(K)$ induce an isomorphism \[H_3(\pi/\Gamma_n)\oplus H_3(M)\xrightarrow{\cong} H_3(X_n(K)).\]

Now observe that the Mayer-Vietoris sequence corresponding to the alternate decomposition $X_n(K)=M_{p_n}^\times\cup_{\partial\nu K}\nu K$ yields the short exact sequence \[0\to H_3(\pi/\Gamma_n)\to H_3(X_n(K))\to H_2(\partial\nu K)\to 0\] because $H_3(X_n(K))\to H_2(\partial\nu K)$ is surjective with $\iota_n(K)_*[M]\mapsto[\partial\nu K]$.
Because $H_2(\partial\nu K)\cong\Z$, the homomorphism $H_3(X_n(K))\to H_2(\partial\nu K)$ splits, and we may choose this splitting in accordance with the above direct sum decomposition because $\iota_n(K)_*[M]\mapsto[\partial\nu K]$. 
(Recall that $\iota_n(K):M\hookrightarrow X_n(K)$ is the inclusion.) 
In other words, classes of the form $(\eta,[M])\in H_3(\pi/\Gamma_n)\oplus H_3(M)$ are precisely those whose images in $H_3(X_n(K))$ map to $[\partial\nu K]\in H_2(\partial\nu K)$.

Finally, suppose $K'\subset M$ admits an $n$-basing $\phi$ relative to $K$ so that $\theta_n(K',\phi)\in H_3(X_n(K))$ is defined.
By Theorem C of \cite{Stees24}, classes $\theta\in H_3(X_n(K))$ which are realized by knots in $M$ must satisfy $\theta\mapsto[\partial\nu K]$, so $\theta_n(K',\phi)$ corresponds to a class of the form $(x,[M])\in H_3(\pi/\Gamma_n)\oplus H_3(M)$.
\end{proof}

\begin{proof}[Proof of Proposition~\ref{prop:thetapair}]
Fix $K\subset M$, and suppose $K', K''\subset M$ admit $n$-basings $\phi'$ and $\phi''$, respectively, relative to $K$.
The backward direction is immediate by definition.
We first prove the forward direction in the case that $K''=K$ and $\phi''=\id_{\pi/\Gamma_n}$.
Suppose that $\theta_n(K')$ vanishes, that is, $\theta_n(K',\phi_0')=\theta_n(K,\id)$ for some $n$-basing $\phi_0'$. 
The composition $\psi=\phi'\circ(\phi_0')^{-1}$ is a self-$n$-basing for $K$, so $\theta_n(K',\phi'_0)=\theta_n(K,\id)$ implies \[\theta_n(K',\phi')=\psi\cdot\theta_n(K',\phi_0')=\psi\cdot\theta_n(K,\id)=\theta_n(K,\psi)\] (see Theorem A of \cite{Stees24}).
It therefore suffices to prove $\theta_n(K,\psi)=\theta_n(K,\id)$.

Recall from Section~\ref{sec:Milnor} that the action of $\Aut(\pi/\Gamma_n,\partial)$ on $H_3(X_n(K))$ is given by postcomposition with homotopy self-equivalences of the pair $(X_n(K),\nu K)$ which restrict to homotopy self-equivalences of the pair $(M_{p_n}^\times,\partial\nu K)$.
Thus, the action on $H_3(X_n(K))$ induces to an action on $H_3(\pi/\Gamma_n)$ by automorphisms.
By Lemma~\ref{lem:H3}, classes $\theta \in H_3(X_n(K))$ realized by knots in $M$ correspond to elements of the form $(\eta,[M])\in H_3(\pi/\Gamma_n)\oplus H_3(M)\cong H_3(X_n(K))$.
Because the action of an element in $\Aut(\pi/\Gamma_n,\partial)$ on a realizable class $\theta=(\eta,[M])$ produces another realizable class, we have $\psi\cdot\theta=\psi\cdot (\eta,[M])=(\psi\cdot\eta,[M])$, where the action $\psi\cdot\eta$ is the restriction-induced action on $H_3(\pi/\Gamma_n)$. 
In particular, as $\psi$ acts as an automorphism on $H_3(\pi/\Gamma_n)$, $\psi\cdot (0,[M])=(0,[M])\in H_3(\pi/\Gamma_n)\oplus H_3(M)$.

The ``trivial" invariant $\theta_n(K,\id)$ is by definition $\iota_n(K)_*[M]$, where $\iota_n(K):M\hookrightarrow X_n(K)$ is the inclusion.
Thus, $\theta_n(K,\id)$ corresponds to $(0,[M])$ under the isomorphism $H_3(\pi/\Gamma_n)\oplus H_3(M)\xrightarrow{\cong} H_3(X_n(K))$ from Lemma~\ref{lem:H3}.
Therefore, for any self-$n$-basing $\psi\in\Aut(\pi/\Gamma_n,\partial)$, \[\theta_n(K,\psi)=\psi\cdot\theta_n(K,\id)=\psi\cdot(0,[M])=(0,[M])=\theta_n(K,\id).\]
This proves the forward direction in this special case.

We finish by proving the forward direction in the general case.
Let $K',K''\subset M$ admit $n$-basings $\phi'$ and $\phi''$, respectively, relative to $K$, and suppose $\theta_n(K')=\theta_n(K'')$.
Then, by definition, $\theta_n(K',\phi')=\theta_n(K'',\phi''_0)$ for some $n$-basing $\phi''_0$ for $K''$ relative to $K$. 
Let $\psi_0=(\phi')^{-1}\circ\phi_0''$ and $\psi=(\phi')^{-1}\circ \phi''$, which are $n$-basings for $K''$ relative to $K'$.
Then $\theta_n(K'',\psi_0)=\theta_n(K',\id)$ (where these are now invariants relative to $K'$).
The composition $\psi\circ\psi_0^{-1}$ is a self-$n$-basing for $K'$.
By the special case proved above, \[\theta_n(K'',\psi)=(\psi\circ\psi_0^{-1})\cdot\theta_n(K'',\psi_0)=(\psi\circ\psi_0^{-1})\cdot \theta_n(K',\id)=\theta_n(K',\id).\]
Thus, $\theta_n(K'',\phi'')=\theta_n(K',\phi')$ (where these are again invariants relative to $K$).
Note that we have used that an $n$-basing $\phi$ for $K'$ relative to $K$ induces a bijection $\phi_*:\mathcal{R}_n(K')\xrightarrow{\cong}\mathcal{R}_n(K)$ between sets of realizable classes relative to $K'$ and $K$ given by $\theta_n(K'',\psi)\mapsto\theta_n(K'',\phi\circ\psi)$ \cite{Stees24}.
\end{proof}


\section{Construction} \label{sec:construct}

In this section, we develop a method for constructing, given some fixed knot $K\subset M$, large families of knots which are pairwise homotopic and which are candidates to satisfy Theorem~\ref{thm:family}. 
In particular, we hope to show these knots are pairwise not almost-concordant using the $\theta$-invariants.
We begin with a motivating example which serves as a prototype for our construction.
Then, we describe a type of satellite operation we call an \emph{ambient connected sum with a weakly Brunnian link}.
Starting with the fixed knot $K$, we perform an ambient connected sum using data prescribed by a link $L\subset S^3$ to form another knot $K'\subset M$.
For suitably chosen connected sums of this type, which we call \emph{$\Gamma$-ambient connected sums}, the invariant $\theta_n(K')$ relative to $K$ is defined and its difference from $\theta_n(K)$ is measured by $L$.


\subsection{A motivating example} \label{subsec:construct:ex0}

Before presenting our construction for homotopically essential knots in aspherical 3-manifolds, we begin with a prototypical method of constructing families of pairwise not almost-concordant knots in 3-manifolds $M\neq S^3$.
This construction appears in various forms in, for instance, \cites{MillerD, Schneiderman, Celoria, FNOP}.
Given $K\subset M$, one modifies $K$ into a new knot $K'$ by performing a finger move pushing part of $K$ around some nontrivial class $g\in\pi$ and forming a clasp.
We may view $K$, the operation we perform on $K$, and the resulting knot $K'$ as sitting inside an embedded genus 2 handlebody in $M$ whose 1-handle cores are $K$ and a curve representing $g$; see Figure~\ref{fig:whitehead}.
\begin{figure}
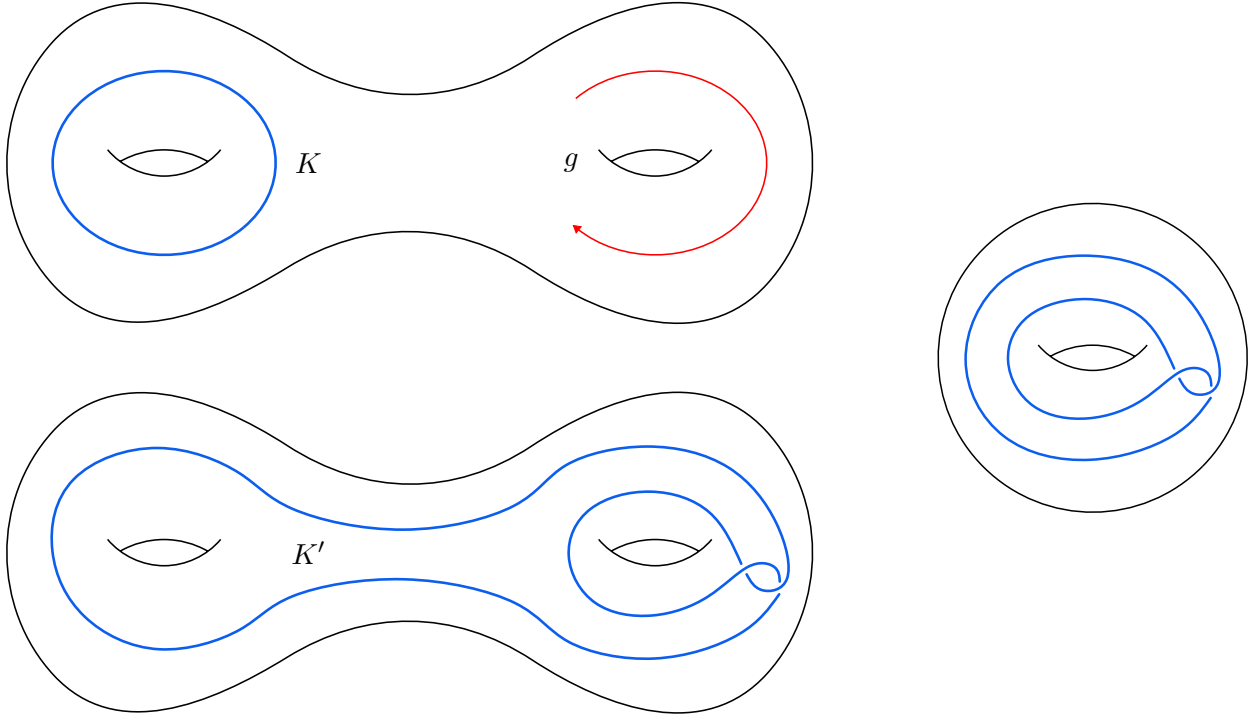

    \centering
    \begin{minipage}{0.65\textwidth}
    \begin{subfigure}{\textwidth}
    \centering
    \begin{tikzpicture}
        \node[anchor=south west,inner sep=0] (image) at (0,0) {\includesvg[width=\textwidth]{pictures/genus2Kg.svg}};
        \begin{scope}[x={(image.south east)},y={(image.north west)}]
        \node at (0.375,0.5) {$K$};
        \node at (0.7,0.5) {$g$};
        \end{scope}
    \end{tikzpicture}
    \end{subfigure}
    \vspace{1em}
    
    \begin{subfigure}{\textwidth}
    \centering
    \begin{tikzpicture}
        \node[anchor=south west,inner sep=0] (image) at (0,0) {\includesvg[width=\textwidth]{pictures/genus2Kprime.svg}};
        \begin{scope}[x={(image.south east)},y={(image.north west)}]
        \node at (0.375,0.5) {$K'$};
        \end{scope}
    \end{tikzpicture}
    \end{subfigure}
    \end{minipage}
    \hfill
    \begin{minipage}{0.25\textwidth}
    \begin{subfigure}{\textwidth}
    \centering
    \begin{tikzpicture}
        \node[anchor=south west,inner sep=0] (image) at (0,0) {\includesvg[width=\textwidth]{pictures/whitehead.svg}};
        \begin{scope}[x={(image.south east)},y={(image.north west)}]
        \end{scope}
    \end{tikzpicture}
    \end{subfigure}
    \end{minipage}
    \caption{Top left: The knot $K$ in a genus 2 handlebody embedded in $M$. Bottom left: The knot $K'$, an ambient connected sum of $K$ with the Whitehead link. Right: A knot whose exterior in the solid torus is the exterior of the Whitehead link in $S^3$.}
    \label{fig:whitehead}
\end{figure}
In the simplest example of this construction, $K$ is the local unknot and the resulting knot $K'$ is a Whitehead curve in some embedded solid torus in $M$.

One expects to distinguish the almost-concordance classes of $K$ and $K'$ because $K'$ exhibits \emph{self-linking} behavior which $K$ does not.
Indeed, such self-linking is detected in \cites{MillerD, Schneiderman, FNOP} using various methods.
Our construction which follows generalizes this example, replacing the Whitehead link with an arbitrary \emph{weakly Brunnian link}.


\subsection{Ambient connected sums with weakly Brunnian links} \label{subsec:construct:wBrunnian}

In this section, we describe an ambient connected sum operation which takes as input a knot $K\subset M$ and a link $L\subset S^3$ and produces another knot $K'\subset M$. One may think of this procedure as a type of satellite operation.

\begin{definition} \label{def:wbrunnian}
Let $L\subset S^3$ be an $m$-component link. 
We say $L$ is \emph{weakly Brunnian} if there is a component $L_0$ of $L$ such that the link $L-L_0$ is the $(m-1)$-component unlink. 
\end{definition}

\noindent Recall that if $L_0$ in the above definition may be chosen to be any component of $L$, we say $L$ is \emph{Brunnian}, so Brunnian links are weakly Brunnian with respect to any component.

Let $L\subset S^3$ be an $m$-component weakly Brunnian link with distinguished component $L_0$.
Since $L-L_0$ is the $(m-1)$-component unlink, we may view the components of $L-L_0$ as the cores of the 1-handles in a genus $m-1$ handlebody in the standard genus $m-1$ Heegaard splitting of $S^3$.
Thus, the exterior of $L-L_0$ is a  genus $m-1$ handlebody $H_0^-$ with 3-dimensional 2-handles $h_i$ attached, $2\leq i\leq m-1$; see Figure~\ref{fig:weaklyBrunniana}.
\begin{figure}
\centering
    \begin{minipage}{0.45\textwidth}
    \begin{subfigure}{\textwidth}
    \centering
    \begin{tikzpicture}
        \node[anchor=south west,inner sep=0] (image) at (0,0) {\includesvg[width=\textwidth]{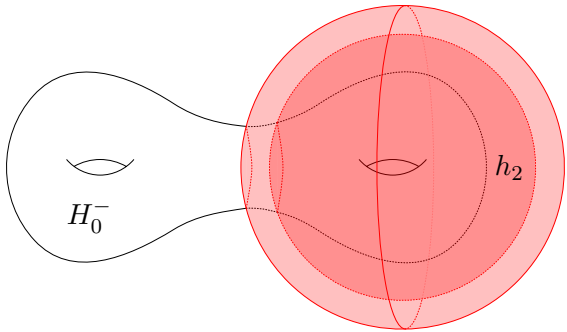}};
        \begin{scope}[x={(image.south east)},y={(image.north west)}]
        \node at (0.15,0.35) {$H_0^-$};
        \node at (0.9,0.5) {$h_2$};
        \end{scope}
    \end{tikzpicture}
    \caption{The exterior of $L-L_0$ when $m=3$.}
    \label{fig:weaklyBrunniana}
    \end{subfigure}
    \end{minipage}
    \hfill
    \begin{minipage}{0.45\textwidth}
    \begin{subfigure}{\textwidth}
    \centering
    \begin{tikzpicture}
        \node[anchor=south west,inner sep=0] (image) at (0,0) {\includesvg[width=\textwidth]{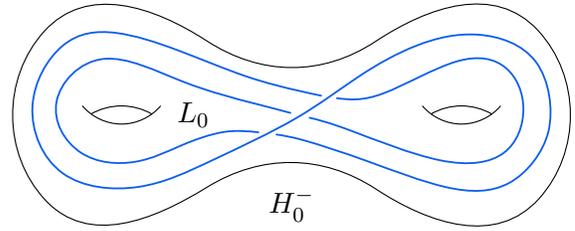}};
        \begin{scope}[x={(image.south east)},y={(image.north west)}]
        \node at (0.325,0.5) {$L_0$};
        \node at (0.5,0.1) {$H_0^-$};
        \end{scope}
    \end{tikzpicture}
    \caption{The knot $L_0\subset H_0^-$ in the case where $L$ is the Borromean rings.}
    \label{fig:weaklyBrunnianb}
    \end{subfigure}
    \end{minipage}
    \caption{Describing the exterior of a weakly Brunnian link $L$ with distinguished component $L_0$.}
    \label{fig:weaklyBrunnian}
\end{figure}
By an isotopy of $L_0$, we may ensure $L_0$ misses the $h_i$.
Consequently, the exterior $E_L$ may be viewed as the exterior of $L_0$ in $H_0^-$ along with the 2-handles $h_i$ which we now ignore; see Figure~\ref{fig:weaklyBrunnianb}. 
Form a genus $m$ handlebody $H_0$ via a boundary connected sum of a solid torus $V_0$ with $H_0^-$.

We consider two knots in $H_0$, as seen in Figure~\ref{fig:connsum1}: Let $K_0\subset H_0$ be the core of $V_0$, and form another knot $K'_0$ as an ambient connected sum of $K_0$ and $L_0$. 
More precisely, assume the boundary of the disk used in the boundary connected sum contains the basepoint of $H_0$. 
Choose basepoint paths for $K_0$ and $L_0$ in $V_0$ and $H_0^-$, respectively, and choose an arc $\alpha$ connecting $K_0$ and $L_0$ which, together with the basepoint paths, forms a nullhomotopic loop in $H_0$.
Use an untwisted band which follows $\alpha$ to ambiently surger $K_0$ and $L_0$ to form $K'_0$; see Figure~\ref{fig:connsum1}.
\begin{figure}
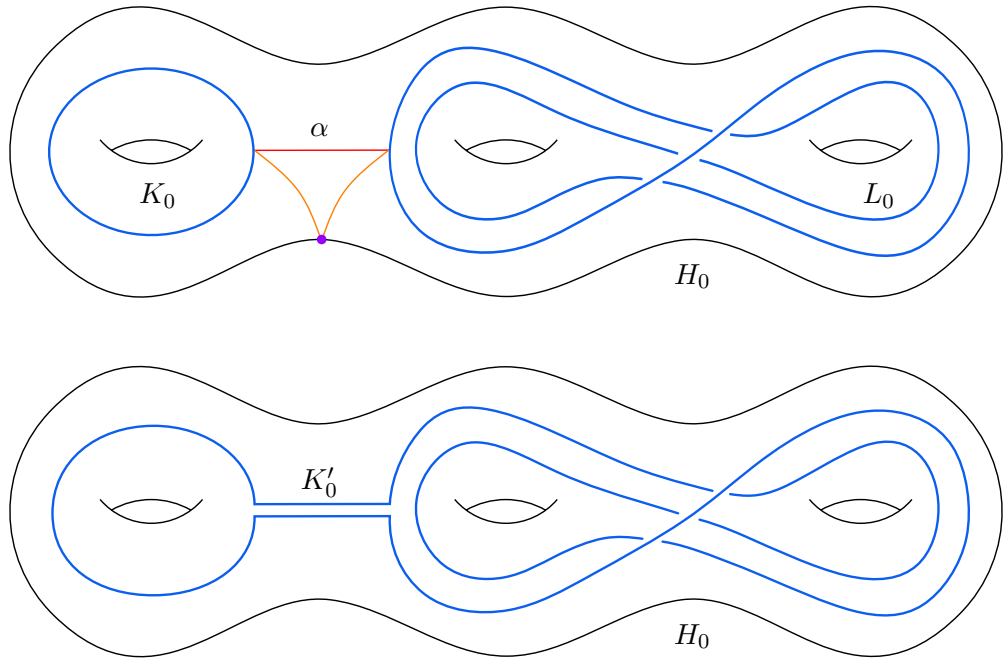

\centering
\begin{minipage}{0.8\textwidth}
    \begin{tikzpicture}
        \node[anchor=south west,inner sep=0] (image) at (0,0) {\includesvg[width=\textwidth]{pictures/genus3connsum.svg}};
        \begin{scope}[x={(image.south east)},y={(image.north west)}]
        \node at (0.15,0.35) {$K_0$};
        \node at (0.875,0.35) {$L_0$};
        \node at (0.3125,0.575) {$\alpha$};
        \node at (0.6875,0.075) {$H_0$};
        \end{scope}
    \end{tikzpicture}
\vspace{1em}

    \begin{tikzpicture}
        \node[anchor=south west,inner sep=0] (image) at (0,0) {\includesvg[width=\textwidth]{pictures/genus3Kprime.svg}};
        \begin{scope}[x={(image.south east)},y={(image.north west)}]
        \node at (0.3125,0.6) {$K_0'$};
        \node at (0.6875,0.075) {$H_0$};
        \end{scope}
    \end{tikzpicture}
\end{minipage}
\caption{Forming $K_0'$ as an ambient connected sum of $K_0$ and $L_0$ in the handlebody $H_0$.}
\label{fig:connsum1}
\end{figure}

A slightly different description of this ambient connected sum, which we now describe, will be useful in Section~\ref{subsec:construct:Gamma}; see Figure~\ref{fig:connsum2}.
\begin{figure}
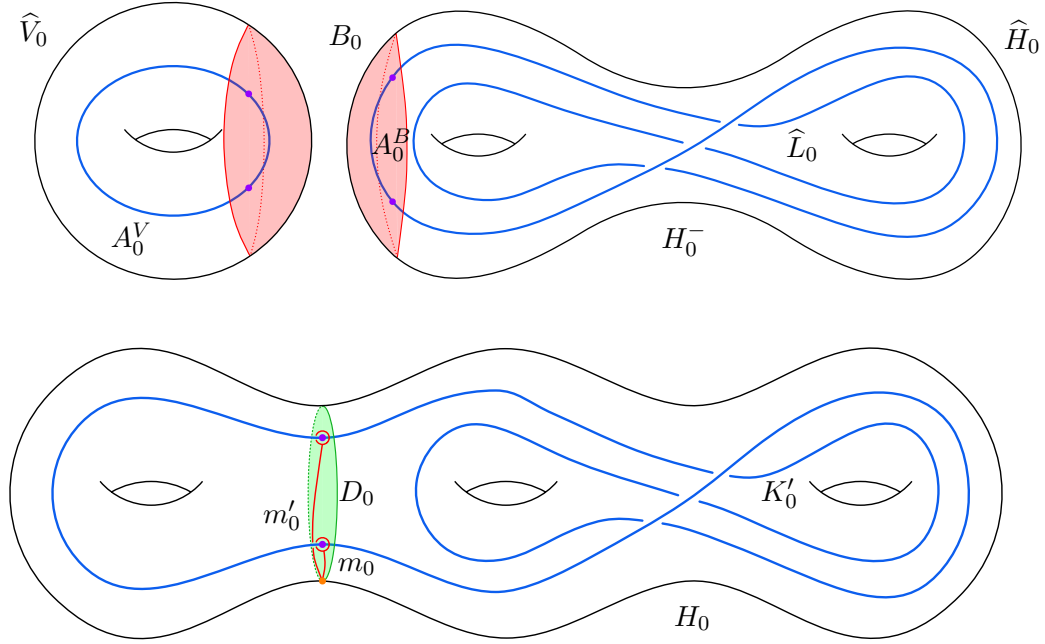

\centering
\begin{minipage}{0.8\textwidth}
    \begin{tikzpicture}
        \node[anchor=south west,inner sep=0] (image) at (0,0) {\includesvg[width=0.28\textwidth]{pictures/genus1Kconnsum.svg} };
        \begin{scope}[x={(image.south east)},y={(image.north west)}]
        \node at (0.35,0.15) {$A_0^V$};
        \node at (0,0.9) {$\wh{V}_0$};
        \end{scope}
    \end{tikzpicture}
\hfill
    \begin{tikzpicture}
        \node[anchor=south west,inner sep=0] (image) at (0,0) {\includesvg[width=0.68\textwidth]{pictures/genus2connsum.svg}};
        \begin{scope}[x={(image.south east)},y={(image.north west)}]
        \node at (0.5,0.15) {$H_0^-$};
        \node at (1,0.9) {$\wh{H}_0$};
        \node at (0,0.9) {$B_0$};
        \node at (0.0675,0.5) {$A_0^B$};
        \node at (0.675,0.5) {$\wh{L}_0$};
        \end{scope}
    \end{tikzpicture}
\vspace{1em}

    \begin{tikzpicture}
        \node[anchor=south west,inner sep=0] (image) at (0,0) {\includesvg[width=\textwidth]{pictures/genus3KprimeD.svg}};
        \begin{scope}[x={(image.south east)},y={(image.north west)}]
        \node at (0.35,0.5) {$D_0$};
        \node at (0.6875,0.075) {$H_0$};
        \node at (0.775,0.5) {$K_0'$};
        \node at (0.35,0.25) {$m_0$};
        \node at (0.275,0.425) {$m_0'$};
        \end{scope}
    \end{tikzpicture}
\end{minipage}
\caption{Forming $K_0'$ via a connected sum of pairs.}
\label{fig:connsum2}
\end{figure}
Consider a genus $m-1$ handlebody $\wh{H}_0$ which is a (non-proper) submanifold of the genus $m-1$ handlebody $H_0^-$: Remove from the pair $(H_0^-,L_0)$ a pair $(B_0,A_0^B)$, where $B_0$ is a 3-ball which meets $\partial H_0^-$ in a disk, and $A_0^B$ is a properly embedded trivial arc in $B_0$.
Denote the closure of the resulting pair by $(\wh{H}_0,\wh{L}_0)$.
Note that the boundary of $\wh{H}_0$ contains a disk $D_0$ meeting $\wh{L}_0$ in two points.
This disk $D_0$ is the closure of the part of $\partial B_0$ which does not meet $\partial H_0^-$.
The pair $(H_0^-,L_0)$ is the boundary connected sum of pairs \[(H_0^-,L_0)=(B_0,A_0^B)\,\natural\, (\wh{H}_0,\wh{L}_0).\]
Remove from $V_0$ and its core an analogous 3-ball-trivial-arc pair to obtain the pair $(\wh{V}_0,A_0^V)$, where $\wh{V}_0$ is a solid torus and $A_0^V$ is a properly embedded arc.
There is an analogous disk on $\partial \wh{V}_0$, which we also call $D_0$, meeting $A_0^V$ in two points.
By gluing the two copies of $D_0$, we obtain the boundary connected sum decomposition \[(H_0,K'_0)=(\wh{V}_0,A_0^V)\,\natural\,(\wh{H}_0,\wh{L}_0).\]

Now fix some knot $K\subset M$, and pick an embedding $\iota:H_0\hookrightarrow M$ which maps $K_0$ homeomorphically to $K$.
Consider the knot $K'\subset M$ which is the image $K'=\iota(K_0')$. 
We call $K'$ an \emph{ambient connected sum of $K$ with the weakly Brunnian link $L\subset S^3$}. 
We may view $K'$ as an ambient connected sum of the knot $K$ with the knot $\iota(L_0)$.


\subsection{$\Gamma$-ambient connected sums and $n$-basings} \label{subsec:construct:Gamma}

While ambient connected sum with a weakly Brunnian link is an operation we may consider for any $K$ in any $M$ with any embedding $\iota$, we now restrict this construction in order to study almost-concordance of homotopically essential knots in aspherical 3-manifolds. 
To this end, suppose $K$ is homotopically essential in aspherical $M$.
Assume for convenience that the basepoint of $M$ is on $\partial\nu K$, so that there is a distinguished based meridian $\mu_0$ for $K$ which is just the boundary of a meridional disk.

Recall from Proposition~\ref{prop:finfty} that the group $\Gamma/\Gamma_2$ is isomorphic to $\Z^\infty$, generated by the meridians of the preimage $p^{-1}(K)$ of $K$ under the universal covering map $p:\wt{M}\to M$.
These meridians correspond to left cosets of $\langle[K]\rangle\leq\pi_1(M)$. 
In addition to $\mu_0$, choose $m-1$ distinct such meridians $\mu_1,\dots,\mu_{m-1}$, and represent them by based loops in $E_K$. 
More specifically, a choice of basepoint $\wt{\ast}$ in the universal cover of $M$ determines a correspondence between components (hence meridians) of $p^{-1}(K)$ and cosets of $\langle [K]\rangle$.
The distinguished meridian $\mu_0$ lifts to a meridian of the component corresponding to the identity coset $\langle [K]\rangle$.
Each other (unbased) meridian $\mu_i$ represents an element of $\Gamma/\Gamma_2$ and corresponds to some coset $g_i\langle[K]\rangle$.
(Later in the paper, we will write $\mu_{g_i}$ for $\mu_i$, where $\mu_g=\mu_h$ if $g\langle[K]\rangle=h\langle[K]\rangle$; see Section~\ref{subsec:zpi:H1}.)
Adding a basepoint path from $\wt{\ast}$ to $g_i\cdot\wt{\ast}$ and taking the image of the resulting based meridian under the covering map $p$ yields a based loop in $E_K$, which we still call $\mu_i$, which is a conjugate of $\mu_0$ by some element of $\pi$ which maps to $g_i$ under the inclusion-induced homomorphism $\pi\twoheadrightarrow\pi_1(M)$.

Represent the classes $[\mu_i]\in\pi$ by disjoint embedded curves along with basepoint paths in $E_K$, which we still call $\mu_i$.
Henceforth we will use $\mu_i$ to denote both the based curve $\mu_i$ and its homotopy class.
We consider ambient connected sums with weakly Brunnian $L$ where the embedding $\iota$ sends the wedge of circles consisting of the 1-handle cores of $\wh{H}_0$ along with basepoint paths to the wedge of circles $\vee_{i=1}^{m-1}\mu_i$; see Figure~\ref{fig:embed}.
\begin{figure}[t]
\centering
    \begin{subfigure}{\textwidth}
    \centering
    \begin{tikzpicture}
        \node[anchor=south west,inner sep=0] (image) at (0,0) {\includesvg[width=0.6\textwidth]{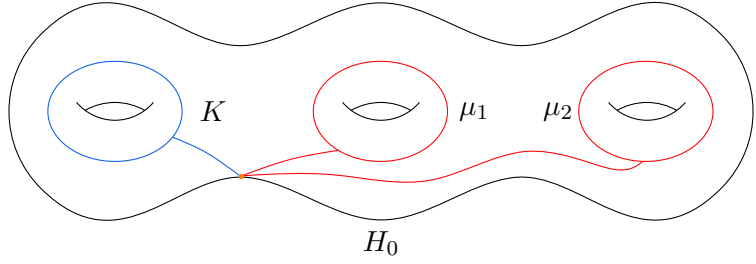} };
        \begin{scope}[x={(image.south east)},y={(image.north west)}]
        \node at (0.5,-0.1) {$H_0$};
        \node at (0.275,0.5) {$K$};
        \node at (0.625,0.5) {$\mu_1$};
        \node at (0.7375,0.5) {$\mu_2$};
        \end{scope}
    \end{tikzpicture}
    \caption{The handlebody $H_0$, labeled with the images of its 1-handle cores.}
    \label{fig:embeda}
    \end{subfigure}
\vspace{1em}
    
    \begin{subfigure}{0.45\textwidth}
    \centering
    \begin{tikzpicture}
        \node[anchor=south west,inner sep=0] (image) at (0,0) {\includesvg[width=\textwidth]{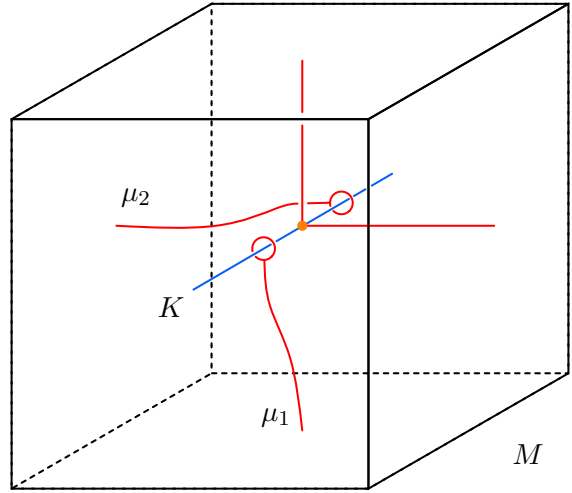}};
        \begin{scope}[x={(image.south east)},y={(image.north west)}]
        \node at (0.925,0.075) {$M$};
        \node at (0.475,0.15) {$\mu_1$};
        \node at (0.225,0.6) {$\mu_2$};
        \node at (0.2875,0.375) {$K$};
        \end{scope}
    \end{tikzpicture}
    \caption{The images of the 1-handle cores of $H_0$ in $M$.}
    \label{fig:embedb}
    \end{subfigure}
\caption{An example of an embedding $\iota:H_0\hookrightarrow M$, where in this case $M$ is a torus bundle over $S^1$. The front and back faces of the cube are identified using the monodromy of the bundle, and the other opposite pairs of faces are glued by the identity.}
\label{fig:embed}
\end{figure}
For conciseness, we call such an ambient connected sum a \emph{$\Gamma$-ambient connected sum}.
Note that any $\Gamma$-ambient connected sum $K'$ of the knot $K$ with a weakly Brunnian link is homotopic to $K$ because $\mu_i\mapsto 1$ under $\pi\twoheadrightarrow\pi_1(M)$ for all $i$.

The primary goal of this section is to determine a sufficient condition for a $\Gamma$-ambient connected sum $K'$ to admit an $n$-basing relative to the original knot $K$ (see Definition~\ref{def:nbasing}).

\begin{proposition} \label{prop:suminducesbasing}
Let $K\subset M$ be a homotopically essential knot in the aspherical 3-manifold $M$. Suppose $L\subset S^3$ is a weakly Brunnian link with vanishing Milnor invariants of lengths $\leq n$.
Then any $\Gamma$-ambient connected sum $K'$ of $K$ with $L$ admits an $n$-basing relative to $K$.
\end{proposition}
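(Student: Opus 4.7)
The plan is to construct the $n$-basing $(\phi,\phi_\partial)$ by building a continuous map of pairs $f\colon(E_{K'},\partial E_{K'})\to(E_K,\partial E_K)$ that restricts to an admissible homeomorphism on the boundary, commutes with the projections to $\pi_1(M)$, and induces an isomorphism $\pi'/\Gamma_n'\xrightarrow{\cong}\pi/\Gamma_n$ over $\pi_1(M)$. Reading off $\phi$ and $\phi_\partial$ from $f$, the compatibility required by Definition~\ref{def:nbasing} will hold by construction.

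First I would produce $f$. Outside the embedded handlebody $\iota(H_0)\subset M$, $E_{K'}$ and $E_K$ literally coincide, so set $f$ to be the identity there. Inside $\iota(H_0)$, using the boundary-connected-sum decomposition $(H_0,K_0')=(\wh V_0,A_0^V)\,\natural\,(\wh H_0,\wh L_0)$ and the analogous decomposition of $(H_0,K_0)$ in which $\wh L_0$ is replaced by a trivial arc $\wh U_0$, it suffices to build a map of pairs
\[
(\wh H_0\setminus\nu\wh L_0,\,\partial\nu\wh L_0)\to(\wh H_0\setminus\nu\wh U_0,\,\partial\nu\wh U_0)
\]
that is the identity on $\partial\wh H_0\setminus D_0$ and admissible on meridians. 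Because $\wh U_0$ is trivial, the target is a genus-$m$ handlebody, hence a $K(F(m),1)$. Obstruction theory supplies the required extension once the $\pi_1$-map is specified: send each meridian of $\wh L_0$ to the corresponding meridian of $\wh U_0$ and each handle core of $\wh H_0$ to itself. By construction, $f$ then sends the meridian of $K'$ to the meridian of $K$, commutes with the projections to $\pi_1(M)$, and carries $\Gamma'$ into $\Gamma$.

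The main obstacle is showing that $f_*\colon\Gamma'\to\Gamma$ descends to an isomorphism $\Gamma'/\Gamma_n'\xrightarrow{\cong}\Gamma/\Gamma_n$. Following the strategy of the proof of Proposition~\ref{prop:finfty}, I would appeal to the Stallings-Dwyer theorem: it is enough to verify that $f$ induces an isomorphism on $H_1(-;\Z[\pi_1(M)])$ and a surjection on $H_2(-;\Z[\pi_1(M)])$. Comparing the Mayer-Vietoris sequences for $E_{K'}=(M\setminus\iota(H_0))\cup\iota(H_0\setminus\nu K_0')$ and $E_K=(M\setminus\iota(H_0))\cup\iota(H_0\setminus\nu K_0)$, and using that $f$ is the identity on $M\setminus\iota(H_0)$, this reduces to a comparison on the $\iota(H_0)$-pieces. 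Passing to the universal cover $\wt M$, the comparison decomposes over the components of $p^{-1}(\iota(H_0))$, indexed by cosets of $\langle[K]\rangle\leq\pi_1(M)$, since the meridians $\mu_i$ all lie in $\ker(\pi_1(\iota(H_0))\to\pi_1(M))$. On each such component, the comparison is essentially the one between $E_L\subset S^3$ and the exterior of the $m$-component unlink. The hypothesis that Milnor's $\ol{\mu}$-invariants of $L$ of length $\leq n$ vanish is precisely the statement that the wedge of meridians induces an isomorphism $F(m)/F(m)_n\cong\pi_1(E_L)/\pi_1(E_L)_n$; a local application of Stallings-Dwyer then yields the required $H_1$-isomorphism and $H_2$-surjection, and assembling via Mayer-Vietoris completes the argument.
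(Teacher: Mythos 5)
Your approach is genuinely different from the paper's: you aim to realize the $n$-basing by constructing an honest continuous map of pairs $f\colon (E_{K'},\partial E_{K'})\to(E_K,\partial E_K)$ and then invoking Stallings-Dwyer, whereas the paper never constructs a map between knot exteriors; instead it works entirely at the level of nilpotent quotients, using Lemma~\ref{lemma:ELhat}, the $n$-basing for $L$ relative to the unlink, and a Van Kampen/pushout argument to assemble the isomorphism $\phi\colon\pi'/\Gamma_n'\to\pi/\Gamma_n$ directly.

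The difficulty lies in the very first step. You want to extend, by obstruction theory, to a map
\[
(\wh H_0\setminus\nu\wh L_0,\,\partial\nu\wh L_0)\longrightarrow(\wh H_0\setminus\nu\wh U_0,\,\partial\nu\wh U_0),
\]
using that the target is a $K(F(m),1)$. But obstruction theory only tells you that a map exists \emph{once a compatible homomorphism} $\wh\pi^L=\pi_1(\wh H_0\setminus\nu\wh L_0)\to F(m)$ \emph{exists}; it does not supply that homomorphism for you. Specifying the image of the meridian of $\wh L_0$ and of the handle cores of $\wh H_0$ is not enough: $\wh\pi^L$ is not a free group on these elements, and there is no reason the assignment is consistent with the relations in $\wh\pi^L$. (The analogous step for a \emph{knot} $J\subset S^3$, as in Proposition~\ref{prop:localknot}, works precisely because the target has $\pi_1=\Z$ and the abelianization of $\pi_1(E_J)$ provides the homomorphism. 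For $m\geq 2$ there is no such automatic map onto a free group: for example, the Whitehead link group admits no meridian-preserving surjection onto $F_2$, and the groups $\wh\pi^L$ inherit the same obstruction in general.) This is exactly the point the paper's proof is designed to avoid: the Milnor-invariant hypothesis guarantees an isomorphism $\pi^L/\pi^L_n\cong F(m)/F(m)_n$ (and, via Lemma~\ref{lemma:ELhat}, $\wh\pi^L/\wh\pi^L_n\cong F(m)/F(m)_n$), but it gives \emph{no} lift to $F(m)$ itself, which is why the paper maps into $K(\pi/\Gamma_n,1)$-type spaces rather than into the handlebody $\wh E_U$. If you replaced the target of your extension problem by a $K(F(m)/F(m)_n,1)$ (or ultimately $K(\pi/\Gamma_n,1)$) the obstruction-theoretic existence would go through, but then you would be reconstructing, in geometric language, essentially the same truncated comparison the paper makes algebraically.

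A secondary concern: in the homology step you assert that the Mayer-Vietoris comparison over $p^{-1}(\iota(H_0))$ is ``essentially the one between $E_L$ and the unlink exterior.'' The components of that preimage are infinite covers of $E'$ and $E$, not copies of $E_L$ and $E_U$, and the relevant comparison is between $\wh E_L$ and $\wh E_U$, glued with the other pieces along disks. This can likely be made precise, but as written it conflates $\wh E_L$ with $E_L$; and in any case the argument does not get off the ground without the map $f$ from the first step.
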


\begin{corollary} \label{cor:suminducestheta}
Let $K$, $L$, and $K'$ be as in the previous proposition. Then the invariants $h_n(K')$, $\theta_n(K')$, and $\ol{\mu}_n(K')$ relative to $K$ are defined.
\end{corollary}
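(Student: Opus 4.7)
The plan is to exhibit an admissible $n$-basing $(\phi,\phi_\partial)$ for $K'$ relative to $K$ by constructing a map of pairs $f:(E_{K'},\partial E_{K'})\to(E_K,\partial E_K)$ and invoking the Stallings--Dwyer theorem on the $\pi_1(M)$-cover, using the vanishing of Milnor's invariants of $L$ as the key input.

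For the boundary identification $\phi_\partial$, since $K$ and $K'$ both lie in the embedded handlebody $\iota(H_0)\subset M$, each carries a canonical meridian (bounding a meridional disk of its tubular neighborhood) and a compatible longitude from any framing extending through $\iota(H_0)$. Sending meridian to meridian and longitude to longitude yields the admissible isomorphism $\phi_\partial:H_1(\partial E_{K'})\xrightarrow{\cong}H_1(\partial E_K)$.

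For the map $f$, I will use the boundary-connected-sum decomposition $(H_0,K_0')=(\wh V_0,A_0^V)\natural(\wh H_0,\wh L_0)$ together with its analogue $(H_0,K_0)=(\wh V_0,A_0^V)\natural(\wh H_0,A_0^{\mathrm{triv}})$, where $A_0^{\mathrm{triv}}\subset\wh H_0$ is the trivially embedded arc corresponding to the piece of $K_0$ in the removed ball $B_0^V$. After embedding by $\iota$, these present $E_{K'}$ and $E_K$ as a common outer piece $Y=E_K\setminus\mathrm{int}\,\iota(\wh H_0)$ glued to local inner pieces $\iota(\wh H_0)-\nu\iota(\wh L_0)$ and $\iota(\wh H_0)-\nu\iota(A_0^{\mathrm{triv}})$, respectively, along a common subsurface of $\partial\iota(\wh H_0)$. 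I take $f=\mathrm{id}$ on $Y$. On the local piece, I use a degree-one map $E_L\to E_{U_m}$ (where $U_m$ is the $m$-component unlink) that is an admissible homeomorphism on boundary tori and realizes the isomorphism $\pi_L/(\pi_L)_n\cong F(m)/F(m)_n$; such a map exists because vanishing of Milnor's invariants of length $\leq n$ implies the longitudes of $L$ lie in $(\pi_L)_n$, so that the classifying map $E_L\to K(\pi_L/(\pi_L)_n,1)=K(F(m)/F(m)_n,1)$ can be factored suitably through $E_{U_m}$ rel boundary by obstruction theory. Gluing these pieces and ensuring compatibility on the shared subsurface produces $f$.

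To see that $f$ induces the desired isomorphism $\phi:\pi'/\Gamma'_n\xrightarrow{\cong}\pi/\Gamma_n$ over $\pi_1(M)$ (compatible with $\phi_\partial$), I will apply the Stallings--Dwyer theorem on the $\pi_1(M)$-cover of the exteriors. A Mayer--Vietoris argument applied to the decomposition $E_{K'}=Y\cup(\iota(\wh H_0)-\nu\iota(\wh L_0))$ splits the $\Z[\pi_1(M)]$-homology computation into the outer part (where $f$ is the identity and contributes no difference) and the local inner part (where the required $H_1$-isomorphism and $H_2$-surjection follow from the vanishing Milnor invariants of $L$). \emph{The main obstacle} will be this local step: verifying that the local map induces the right isomorphism on lower central quotients after lifting to the $\pi_1(M)$-cover. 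Because $\iota$ sends meridians of $L-L_0$ to the loops $\mu_1,\ldots,\mu_{m-1}$, which lie in distinct cosets of $\langle[K]\rangle\leq\pi_1(M)$ (per Proposition~\ref{prop:finfty}), the $\pi_1(M)$-cover of the inner piece decomposes into infinitely many translated copies of the link-exterior picture, and the free-group model $\Gamma/\Gamma_n\cong F(\infty)/F(\infty)_n$ from Proposition~\ref{prop:finfty} must be invoked to confirm that the isomorphism lifts coherently across these translates.
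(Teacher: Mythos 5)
The corollary is an immediate consequence of Proposition~\ref{prop:suminducesbasing} together with the definitions in Section~\ref{sec:Milnor}: by Definitions~\ref{def:hpair} and~\ref{def:h} (and the subsequent descriptions of $\theta_n$ and $\ol{\mu}_n$), the invariants $h_n$, $\theta_n$, and $\ol{\mu}_n$ are defined for $K'$ relative to $K$ precisely when $K'$ admits an $n$-basing relative to $K$, and Proposition~\ref{prop:suminducesbasing} asserts exactly this. No further construction is required, so your proposal is solving a much harder problem than the one posed.

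More importantly, what you are actually attempting is a re-derivation of Proposition~\ref{prop:suminducesbasing}, and that re-derivation has a genuine gap. You want a degree-one map $E_L\to E_{U_m}$ that is an admissible homeomorphism on boundary tori and therefore induces a homomorphism $\pi_L\to F_m$ sending each meridian $m_i$ to a free generator and each longitude $l_i$ to the identity. Such a homomorphism (hence such a map) need not exist for a link $L$ with vanishing Milnor invariants of length $\leq n$. The hypothesis gives $l_i\in(\pi_L)_n$, which is a statement about the image of $l_i$ in the nilpotent quotient $\pi_L/(\pi_L)_n\cong F_m/(F_m)_n$; it does not place $l_i$ in the kernel of any surjection onto $F_m$ itself, and indeed a surjection $\pi_L\twoheadrightarrow F_m$ sending meridians to free generators generally fails to exist because the Wirtinger relations of $\pi_L$ do not hold in $F_m$. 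Your appeal to ``obstruction theory rel boundary'' does not repair this: the abstract isomorphism $\pi_L/(\pi_L)_n\cong F_m/(F_m)_n$ does not lift to a homomorphism $\pi_L\to F_m$, and moreover $E_{U_m}$ for $m\geq 2$ is not aspherical (it contains essential $2$-spheres), so a $\pi_1$-level map would not determine a map of spaces even if it existed. This is precisely the obstacle the paper's proof of Proposition~\ref{prop:suminducesbasing} avoids by working entirely at the level of lower central quotients via Lemma~\ref{lemma:ELhat}, Van Kampen's theorem, and the pushout diagrams of Figure~\ref{fig:pushoutlg}, rather than attempting to build a map of link exteriors.
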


Let $L\subset S^3$ be a weakly Brunnian link with distinguished component $L_0$.
Using the notation of Section~\ref{subsec:construct:wBrunnian}, let $\wh{E_L}$ denote the exterior of $\wh{L}_0$ in the genus $m-1$ handlebody $\wh{H}_0$; see Figure~\ref{fig:connsum2}.
In order to prove Proposition~\ref{prop:suminducesbasing}, we first prove that for weakly Brunnian $L\subset S^3$ with vanishing Milnor invariants of lengths $\leq n$ the submanifold $\wh{E_L}$ carries the $n\upth$ lower central quotients of the link group.

\begin{lemma} \label{lemma:ELhat}
Let $L\subset S^3$ be a weakly Brunnian link with distinguished component $L_0$ with vanishing Milnor invariants of lengths $\leq n$, and let $\pi^L=\pi_1(E_L)$. Let $\wh{E_L}$ be as above, and let $\wh{\pi}^L=\pi_1(\wh{E_L})$. Then the inclusion $\wh{E_L}\hookrightarrow E_L$ induces an isomorphism \[\wh{\pi}^L/\wh{\pi}^L_n\xrightarrow{\cong}\pi^L/\pi^L_n.\]
\end{lemma}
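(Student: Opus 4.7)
The plan is to factor the desired isomorphism through a free group $F = F(m)$ using Stallings' theorem, and then compare with Milnor's characterization of links with vanishing $\ol{\mu}$-invariants.

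First I would compute $H_*(\wh{E_L};\mathbb{Z})$ in low degrees via Mayer--Vietoris. Since $\wh{L}_0$ is an arc properly embedded in the genus $m-1$ handlebody $\wh{H}_0$ with both endpoints on $\partial\wh{H}_0$, a tubular neighborhood $\nu \wh{L}_0$ is a 3-ball meeting $\wh{E_L}$ along an annulus $A$ (the lateral portion of its boundary). The Mayer--Vietoris sequence for $\wh{H}_0 = \wh{E_L} \cup \nu\wh{L}_0$, combined with $H_{\geq 2}(\wh{H}_0) = 0$ and the homology of a 3-ball, yields $H_2(\wh{E_L}) = 0$ together with the split short exact sequence
\[
0 \to H_1(A) \to H_1(\wh{E_L}) \to H_1(\wh{H}_0) \to 0.
\]
Because $L - L_0$ is the unlink (as $L$ is weakly Brunnian), the loops around the $m-1$ 1-handles of $\wh{H}_0$ are meridians of the corresponding components of $L-L_0$, and the generator of $H_1(A)$ is a meridian of $\wh{L}_0$. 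Hence $H_1(\wh{E_L}) \cong \mathbb{Z}^m$, with a basis realized by based meridians $\mu_0, \mu_1, \ldots, \mu_{m-1}$ chosen relative to a basepoint in $\wh{E_L}$.

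Next I would apply Stallings' theorem to the homomorphism $F \to \wh{\pi}^L$ sending the $i$-th generator to $\mu_i$. This map is an iso on $H_1$ (both are $\mathbb{Z}^m$ with meridians as basis); moreover, by Hopf's theorem $H_2(\wh{\pi}^L)$ is a quotient of $H_2(\wh{E_L}) = 0$, so the map is surjective (vacuously) on $H_2$. Stallings thus gives $F/F_n \xrightarrow{\cong} \wh{\pi}^L/\wh{\pi}^L_n$ for every $n$. On the other hand, the hypothesis that $L$ has vanishing Milnor invariants of lengths $\leq n$ (equivalently $\ol{\mu}_k(L) = 0$ for $k \leq n-1$ in the paper's conventions) is exactly Milnor's condition for the meridional map $F \to \pi^L$ to induce an isomorphism $F/F_n \xrightarrow{\cong} \pi^L/\pi^L_n$.

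Since the meridians chosen in $\wh{E_L}$ map under the inclusion $\wh{E_L} \hookrightarrow E_L$ to meridians of $L$, the composition $F \to \wh{\pi}^L \to \pi^L$ agrees with the direct meridional homomorphism, so in the triangle
\[
\begin{tikzcd}
F/F_n \arrow[r, "\cong"] \arrow[dr, "\cong"'] & \wh{\pi}^L/\wh{\pi}^L_n \arrow[d] \\
 & \pi^L/\pi^L_n
\end{tikzcd}
\]
both slanted arrows are isomorphisms, forcing the vertical inclusion-induced map to be an isomorphism as well. The main technical obstacle is the Mayer--Vietoris computation and correctly identifying the $H_1$-generators of $\wh{E_L}$ with meridians of $L$; once this bookkeeping is set up, the Stallings step and appeal to Milnor's theorem are routine.
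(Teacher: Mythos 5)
Your proof is correct, and it takes a genuinely different route from the paper's. The paper works entirely inside the groups: it identifies the kernel of the inclusion-induced surjection $\wh{\pi}^L\twoheadrightarrow\pi^L$ as normally generated by the commutators $[\wh{m}_i,\wh{l}_i]$ (using the surface relation $\wh{m}_0(\wh{m}_0')^{-1}=\prod_{i=1}^{m-1}[\wh{m}_i,\wh{l}_i]$ on $\partial\wh{E_L}$), and then runs an induction on lower central depth to show that the vanishing Milnor hypothesis $l_i\in\pi^L_n$ forces $\wh{l}_i\in\wh{\pi}^L_n$, hence the kernel lies in $\wh{\pi}^L_n$. You instead pass both groups through the free group $F$ on $m$ meridians: a Mayer--Vietoris computation for $\wh{H}_0=\wh{E_L}\cup\nu\wh{L}_0$ gives $H_2(\wh{E_L})=0$ and $H_1(\wh{E_L})\cong\Z^m$ generated by meridians (the annulus $A$ contributes the meridian of $L_0$, and $H_1(\wh{H}_0)$ contributes meridians of the unlink $L-L_0$), so by Hopf and Stallings the meridional map $F\to\wh{\pi}^L$ is a lower-central isomorphism; Milnor's theorem applied to the hypothesis does the same for $F\to\pi^L$; and commutativity of the triangle forces the inclusion-induced map to be an isomorphism. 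Both proofs are sound. The paper's argument is more self-contained, avoids the homology computation, and, more importantly, produces the explicit kernel generators $[\wh{m}_i,\wh{l}_i]$ that are reused verbatim in the proof of Proposition~\ref{prop:suminducesbasing}, so the paper's version does double duty. Your approach is structurally cleaner and parallels the Stallings-theorem argument in Proposition~\ref{prop:finfty}, at the cost of the Mayer--Vietoris bookkeeping; one small caveat is that your appeal to Milnor gives the iso $F/F_n\cong\pi^L/\pi^L_n$ as a \emph{consequence} (in fact $F/F_{n+1}\cong\pi^L/\pi^L_{n+1}$ holds) rather than as an exact equivalence, but only the one direction is needed, so the argument stands.
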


\begin{proof}
Recall from Section~\ref{subsec:construct:wBrunnian} the disk $D_0\subset\partial\wh{H}_0$ which meets $\wh{L}_0$ in two points (see Figure~\ref{fig:connsum2}). 
Assume for convenience that the basepoint of $\wh{E_L}$ is also the basepoint of $E_L$ and lies on $\partial D_0$.
Choose a basing of the link $L-L_0$ so that the (based) attaching curves of the 2-handles $h_i$ represent the commutators $[m_i,l_i]$, $2\leq i\leq m-1$, where $m_i$ and $l_i$ denote the meridians and longitudes, respectively, of $L-L_0$, $1\leq i\leq m-1$; see Figure~\ref{fig:weaklyBrunniana}.
We also use $m_i$ and $l_i$ to denote the classes of these (based) curves in $\pi^L$, and we use $\wh{m}_i$ and $\wh{l}_i$ to denote the classes of these meridians and longitudes in $\wh{\pi}^L$.
The intersection $D_0\cap\wh{E_L}$ is the disk $D_0$ with two meridional disks for $L_0$ removed.
Form two distinguished based meridians of the distinguished component $L_0$, denoted $m_0$ and $m_0'$, by basing the boundaries of these two meridional disks so that $[\partial D_0]=m_0(m_0')^{-1}\in\pi^L$; see Figure~\ref{fig:connsum2}. 
Denote the analogous classes in $\wh{\pi}^L$ by $\wh{m}_0$ and $\wh{m}_0'$.

Up to homotopy equivalence, the inclusion $\wh{E_L}\hookrightarrow E_L$ involves attaching $m-1$ disks to $\wh{E_L}$. 
Of these, $m-2$ are attached along the attaching curves for the 2-handles $h_i$ and, when based, represent the commutators $[\wh{m}_i,\wh{l}_i]$, $2\leq i\leq m-1$.
The remaining disk is attached along a curve representing $[\partial D_0]=\wh{m}_0(\wh{m}_0')^{-1}$.
Thus, the inclusion $\wh{E_L}\hookrightarrow E_L$ induces a surjective homomorphism $\wh{\pi}^L\twoheadrightarrow\pi^L$ whose kernel is normally generated by the $[\wh{m}_i,\wh{l}_i]$, $2\leq i\leq m-1$, and the element $[\partial D_0]=\wh{m}_0(\wh{m}_0')^{-1}$.
This homomorphism induces a surjection $\wh{\pi}^L/\wh{\pi}^L_n\twoheadrightarrow\pi^L/\pi^L_n$.
To prove this is an isomorphism, it suffices to show the attaching curves represent elements of $\wh{\pi}^L_n$. 
Note that the curve $\partial D_0$ is the boundary of a genus $m-1$ surface on $\partial\wh{E_L}$, and \[\wh{m}_0(\wh{m}_0')^{-1}=\prod_{i=1}^{m-1}[\wh{m}_i,\wh{l}_i].\]
Thus, $\ker(\wh{\pi}^L\twoheadrightarrow\pi^L)$ is normally generated by $[\wh{m}_i,\wh{l}_i]$, $1\leq i\leq m-1$, and it suffices to prove $[\wh{m}_i,\wh{l}_i]\in\wh{\pi}^L_n$ for $1\leq i\leq m-1$.

We in fact show $\wh{l}_i\in\wh{\pi}^L_n$ using induction on $n$.
Since $L$ has vanishing Milnor invariants of lengths $\leq n$, $l_i\in\pi^L_n$ for all $0\leq i\leq m-1$.
For the base case, it is apparent by construction that $\wh{\pi}^L/\wh{\pi}^L_2\twoheadrightarrow\pi^L/\pi^L_2$ is an isomorphism, so $\wh{l}_i\in\wh{\pi}^L_2$ since $\wh{l}_i\mapsto l_i\in\pi^L_n$.
Now suppose for some $2\leq k<n$ that $\wh{l}_i\in\wh{\pi}^L_k$ for all $1\leq i\leq m-1$.
Because $\wh{l}_i\mapsto l_i\in\pi^L_n$, we may express $\wh{l}_i$ as $\wh{l}_i=c_ik_i$ for some $c_i\in\wh{\pi}^L_n$ and some $k_i\in\ker(\wh{\pi}^L\twoheadrightarrow\pi^L)$. 
But the kernel of $\wh{\pi}^L\twoheadrightarrow\pi^L$ is normally generated by commutators $[\wh{m}_i,\wh{l}_i]$, and all $\wh{l}_i\in\wh{\pi}^L_k$ by assumption.
Thus, $k_i\in\wh{\pi}^L_{k+1}$, and therefore so is $\wh{l}_i$.
This completes the induction.
(Note that we may in fact achieve an isomorphism on $(n+1)\upst$ lower central quotients, but we do not need this.)
\end{proof}

We now leverage Lemma~\ref{lemma:ELhat} to prove Proposition~\ref{prop:suminducesbasing}.

\begin{proof}[Proof of Proposition~\ref{prop:suminducesbasing}]
In addition to the weakly Brunnian link $L$, consider the $m$-component unlink $U$ as a weakly Brunnian link with distinguished component $U_0$.
Recall the genus $m$ handlebody $H_0$ from Section~\ref{subsec:construct:wBrunnian} and the boundary connected sum decompositions \[(H_0,K_0)=(\wh{V}_0,A_0^V)\,\natural\,(\wh{H}_0,\wh{U}_0)\] and \[(H_0,K'_0)=(\wh{V}_0,A_0^V)\,\natural\,(\wh{H}_0,\wh{L}_0),\] following the notation of Section~\ref{subsec:construct:wBrunnian}; see Figure~\ref{fig:connsum2}.
Then the knots $K$ and $K'$ in $M$ are the images $\iota(K_0)$ and $\iota(K'_0)$, respectively.
Define $H=\iota(H_0)$ and $\wh{H}=\iota(\wh{H}_0)$.

Following the notation of Lemma~\ref{lemma:ELhat}, write $\wh{\pi}^U=\pi_1(\wh{E_U})$ and $\wh{\pi}^L=\pi_1(\wh{E_L})$.
Denote the exteriors of $K_0$ and $K_0'$ in the handlebody $H_0$ by $E_0=E_{K_0\subset H_0}$ and $E'_0=E_{K_0'\subset H_0}$, respectively.
Using the decompositions above and notation from Lemma~\ref{lemma:ELhat}, Van Kampen's Theorem implies \[\pi_1(E_0)\cong\frac{\wh{\pi}^U\ast\langle l_0\rangle}{\langle\langle [\wh{m}_0,l_0]=\prod_{i=1}^{m-1}[\wh{m}_i,\wh{l}_i]\rangle\rangle}\] and \[\pi_1(E'_0)\cong\frac{\wh{\pi}^L\ast\langle l_0\rangle}{\langle\langle [\wh{m}_0,l_0]=\prod_{i=1}^{m-1}[\wh{m}_i,\wh{l}_i]\rangle\rangle},\] where $l_0$ is a longitude for $K_0$.
For simplicity, we use the notation $\wh{m}_i$ and $\wh{l}_i$, $1\leq i\leq m-1$, to denote the meridians and longitudes of both $U$ and $L$ in $\wh{E_U}$ and $\wh{E_L}$, respectively.
Further, when we view the $\wh{m}_i$ and $\wh{l}_i$ as elements of $E_0$ or $E'_0$, we will now write $m_i$ and $l_i$.
Note that $m'_0=l_0m_0l_0^{-1}$ in the language of Lemma~\ref{lemma:ELhat}.

Since $L$ has vanishing Milnor invariants of lengths $\leq n$, $L$ admits an $n$-basing $\phi_0:\pi^L/\pi^L_n\xrightarrow{\cong}\pi^U/\pi^U_n$ relative to $U$ (see Lemma 8.1 of \cite{Stees24}).
Applying Lemma~\ref{lemma:ELhat} twice, we obtain an isomorphism $\wh{\pi}^L/\wh{\pi}^L_n\xrightarrow{\cong}\wh{\pi}^U/\wh{\pi}^U_n$ via the composition \[\wh{\pi}^L/\wh{\pi}^L_n\xrightarrow{\cong}\pi^L/\pi^L_n\xrightarrow[\phi_0]{\cong}\pi^U/\pi^U_n\xrightarrow{\cong}\wh{\pi}^U/\wh{\pi}^U_n\] which, by construction, preserves meridians and longitudes.

Let $E$ and $E'$ denote the respective images of $E_0$ and $E'_0$ under the embedding $\iota$.
This embedding induces isomorphisms $\pi_1(E_0)\xrightarrow{\cong}\pi_1(E)$ and $\pi_1(E'_0)\xrightarrow{\cong}\pi_1(E')$, and we will not distinguish between these isomorphic groups.
We will prove that $K'$ admits an $n$-basing relative to $K$ using Van Kampen's Theorem applied to the decompositions $E_{K'}=E_H\cup_{\partial H} E'$ and $E_K=E_H\cup_{\partial H} E$, where $E_H$ is the exterior of the handlebody $H$ in $M$. 
The $n$-basing we define will be the identity on the image of $\pi_1(E_H)$ and will leverage the above isomorphism $\wh{\pi}^L/\wh{\pi}^L_n\xrightarrow{\cong}\wh{\pi}^U/\wh{\pi}^U_n$ to relate $\pi_1(E')$ to $\pi_1(E)$.

Define the auxilliary groups $G$ and $G'$ by \[ G=\frac{\left(\wh{\pi}^U/\wh{\pi}^U_n\right)\ast\langle l_0\rangle}{\langle\langle [\wh{m}_0,l_0]\rangle\rangle}\] and \[ G'=\frac{\left(\wh{\pi}^L/\wh{\pi}^L_n\right)\ast\langle l_0\rangle}{\langle\langle [\wh{m}_0,l_0]\rangle\rangle}.\]
Using the isomorphism $\wh{\pi}^L/\wh{\pi}^L_n\xrightarrow{\cong}\wh{\pi}^U/\wh{\pi}^U_n$ above, we obtain an isomorphism $\wt{\phi}:G'\xrightarrow{\cong} G$.

Observe that we have surjective homomorphisms $\wt{p}:\pi_1(E)\twoheadrightarrow G$ and $\wt{p}\,':\pi_1(E')\twoheadrightarrow G'$: The element $\prod_{i=1}^{m-1}[\wh{m}_i,\wh{l}_i]$ is in the kernel of the composition of surjective homomorphisms \[\wh{\pi}^U\ast\langle l_0\rangle\twoheadrightarrow\left(\wh{\pi}^U/\wh{\pi}^U_n\right)\ast\langle l_0\rangle\twoheadrightarrow G\] since $\wh{l}_i\in\wh{\pi}^U_n$ for all $1\leq i\leq m-1$.
Thus, the surjective homomorphism $\wh{\pi}^U\ast\langle l_0\rangle\twoheadrightarrow G$ factors through $\pi_1(E)$ into two surjections.
In fact, the above argument shows that $G$ is the pushout
\begin{center}
\begin{tikzcd}
\wh{\pi}^U\ast\langle\l_0\rangle \arrow[r,twoheadrightarrow] \arrow[d,twoheadrightarrow] & \pi_1(E) \arrow[d,twoheadrightarrow,"\wt{p}"] \\
\wh{\pi}^U/\wh{\pi}^U_n\ast\langle l_0\rangle \arrow[r,twoheadrightarrow] & G.
\end{tikzcd}
\end{center}
A similar argument yields a surjective homomorphism $\wt{p}\,':\pi_1(E')\twoheadrightarrow G'$ which fits into an analogous pushout diagram.

Now consider the composite homomorphisms $p'\circ\iota_1':\pi_1(E')\to\pi'\twoheadrightarrow\pi'/\Gamma'_n$ and $p\circ\iota_1:\pi_1(E)\to \pi\twoheadrightarrow\pi/\Gamma_n$, where the initial maps are induced by the inclusions $E'\hookrightarrow E_{K'}$ and $E\hookrightarrow E_K$. 
These compositions factor through the surjective homomorphisms $\wt{p}\,'$ and $\wt{p}$ obtained above.
Indeed, the pushout squares for $G'$ and $G$ fit into the diagrams seen in Figure~\ref{fig:pushout}.
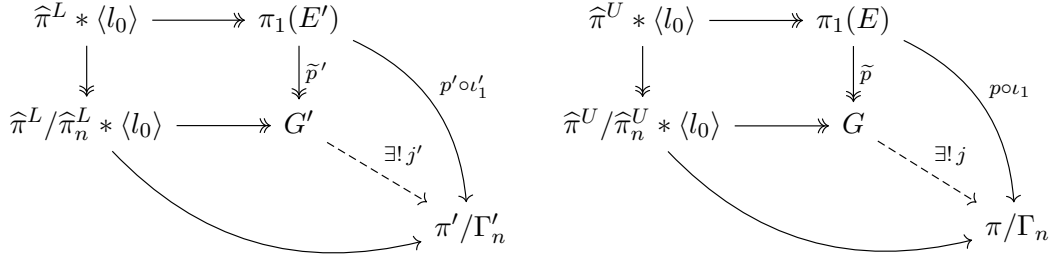
\begin{figure}
\centering
\begin{tabular}{cc}
\begin{tikzcd}
\wh{\pi}^L\ast\langle l_0\rangle \arrow[r,twoheadrightarrow] \arrow[d,twoheadrightarrow] & \pi_1(E') \arrow[d,twoheadrightarrow,"\wt{p}\,'"] \arrow[ddr, bend left=30,"p'\circ\iota'_1"] & \\
\wh{\pi}^L/\wh{\pi}^L_n\ast\langle l_0\rangle \arrow[r,twoheadrightarrow] \arrow[drr, bend right=30] & G' \arrow[dr, dashed, "\exists!\,j'"] & \\
& & \pi'/\Gamma'_n
\end{tikzcd}
&
\begin{tikzcd}
\wh{\pi}^U\ast\langle l_0\rangle \arrow[r,twoheadrightarrow] \arrow[d,twoheadrightarrow] & \pi_1(E) \arrow[d,twoheadrightarrow,"\wt{p}"] \arrow[ddr, bend left=30,"p\circ\iota_1"] & \\
\wh{\pi}^U/\wh{\pi}^U_n\ast\langle l_0\rangle \arrow[r,twoheadrightarrow] \arrow[drr, bend right=30] & G \arrow[dr, dashed, "\exists!\,j"] & \\
& & \pi/\Gamma_n
\end{tikzcd}
\end{tabular}
\caption{Pushout squares for $G'$ and $G$ yield homomorphisms $j':G'\to\pi'/\Gamma'_n$ and $j:G\to\pi/\Gamma_n$.}
\label{fig:pushout}
\end{figure}
The homomorphism at the bottom of each diagram exists because the inclusion-induced homormophisms $\wh{\pi}^L\to\pi'$ and $\wh{\pi}^U\to\pi$ factor through $\Gamma'$ and $\Gamma$, respectively, since $\wh{\pi}^L$ and $\wh{\pi}^U$ are normally generated by meridians which all map trivially to $\pi_1(M)$.
More specifically, the inclusion-induced homomorphism $\wh{\pi}^L\to\pi_1(M)$ factors through $\pi_1(\wh{H})$, which is generated by the cores of its 1-handles which, by construction, map to $\pi_1(M)$ trivially, and similarly for $\wh{\pi}^U$. 
This yields the composite $\wh{\pi}^L/\wh{\pi}^L_n\to\Gamma'/\Gamma'_n\hookrightarrow\pi'/\Gamma'_n$.
A similar argument holds for $\wh{\pi}^U/\wh{\pi}^U_n$.

We thus obtain the commutative diagram over $\pi_1(M)$ in Figure~\ref{fig:pushoutlg}. The left-front and left-back squares of this diagram are pushout squares which commute because all maps are induced by inclusions. The right-front and right-back squares commute by the factorizations described above. The top rectangle commutes by construction: The group $\pi_1(\partial H)$ is generated by $\{m_i,l_i\}_{i=0}^{m-1}$, and the isomorphism $\wt{\phi}$ preserves meridians and longitudes.
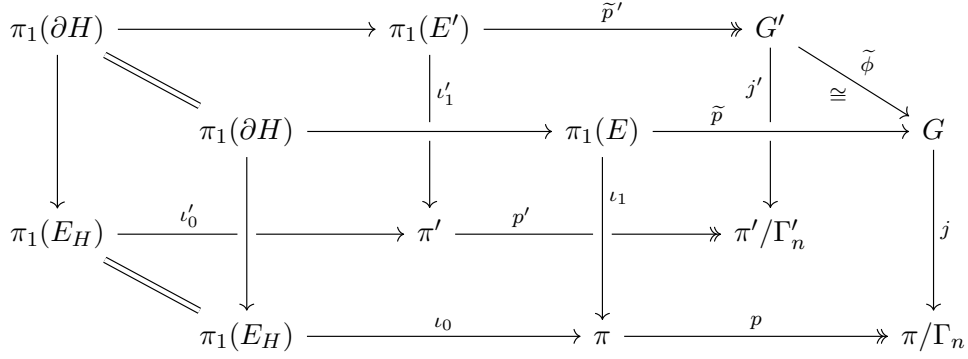
\begin{figure}
\centering
\begin{tikzcd}
\pi_1(\partial H) \arrow[rr] \arrow[dr, equal] \arrow[dd] & & \pi_1(E') \arrow[dd, "\iota_1'" near start] \arrow[rr, twoheadrightarrow, "\wt{p}\,'"] & & G' \arrow[dd, "j'"' near start] \arrow[dr, "\wt{\phi}", "\cong"'] & \\ 
& \pi_1(\partial H) \arrow[rr, crossing over] & & \pi_1(E) \arrow[rr, crossing over, "\wt{p}" near start] & & G \arrow[dd,"j"] \\
\pi_1(E_H) \arrow[dr, equal] \arrow[rr, "\iota_0'" near start] & & \pi' \arrow[rr,twoheadrightarrow, "p'" near start] & & \pi'/\Gamma'_n & \\
& \pi_1(E_H) \arrow[from=uu, crossing over] \arrow[rr, "\iota_0"] & & \pi \arrow[from=uu, crossing over, "\iota_1" near start] \arrow[rr, twoheadrightarrow, "p"] & & \pi/\Gamma_n
\end{tikzcd}
\caption{Obtaining isomorphisms $\phi:\pi'/\Gamma'_n\xrightarrow{\cong}\pi/\Gamma_n$ and $\phi':\pi/\Gamma_n\xrightarrow{\cong}\pi'/\Gamma'_n$.}
\label{fig:pushoutlg}
\end{figure}

Leveraging the left-front and left-back pushout squares of Figure~\ref{fig:pushoutlg}, we first obtain homomorphisms $\phi:\pi'/\Gamma'_n\to\pi/\Gamma_n$ and $\phi':\pi/\Gamma_n\to\pi'/\Gamma'_n$ which fit into the diagram. 
We then show they are mutually inverse isomorphisms. 
Because the left-back square is a pushout, the homomorphisms $p\circ\iota_0:\pi_1(E_H)\to\pi/\Gamma_n$ and $j\circ\wt{\phi}\circ\wt{p}\,':\pi_1(E')\to\pi/\Gamma_n$ induce a homomorphism $\pi'\to\pi/\Gamma_n$ which factors through $\pi'/\Gamma'_n$ because the diagram is over $\pi_1(M)$. 
We wish to show the resulting homomorphism $\phi:\pi'\Gamma'_n\to\pi/\Gamma_n$ fits into the diagram in Figure~\ref{fig:pushoutlg}. 
Since $\phi$ is defined using a pushout diagram, $p\circ\iota_0=\phi\circ p'\circ\iota'_0$, so the bottom rectangle commutes. 
Pushout properties also yield $j\circ\wt{\phi}\circ\wt{p}\,'=\phi\circ p'\circ\iota'_1$. 
This implies $\phi\circ j'\circ\wt{p}\,'=\phi\circ p'\circ\iota'_1=j\circ\wt{\phi}\circ\wt{p}\,'$. 
The surjectivity of $\wt{p}\,'$ implies $j\circ\wt{\phi}=\phi\circ j'$, and the rightmost square commutes. 
By a similar argument which uses $\wt{\phi}^{-1}$, we obtain a homomorphism $\phi':\pi/\Gamma_n\to\pi'/\Gamma'_n$ fitting into the diagram.

We now show that $\phi'=\phi^{-1}$. 
Observe that \[\phi'\circ\phi\circ p'\circ\iota'_0=\phi'\circ p\circ\iota_0=p'\circ\iota'_0\] and \[\phi'\circ\phi\circ p'\circ\iota'_1=\phi'\circ j\circ\wt{\phi}\circ\wt{p}\,'=j'\circ\wt{p}\,'=p'\circ\iota'_1.\]
Since $\phi'\circ\phi\circ p'$ and $p'$ are the same after precomposing with $\iota'_0$ or $\iota'_1$, $\phi'\circ\phi\circ p'=p'$ because the left-back square of Firgure~\ref{fig:pushoutlg} is a pushout.
The surjectivity of $p'$ implies $\phi'\circ\phi=\id_{\pi'/\Gamma'_n}$.
Reversing the respective roles of $\phi$ and $\phi'$ and the the back and front faces of the diagram, we obtain $\phi\circ\phi'=\id_{\pi/\Gamma_n}$.
Thus, $\phi'=\phi^{-1}$.

Finally, observe that $\phi$ satisfies the remaining condition to be an $n$-basing by construction. 
The isomorphism $\wt{\phi}$ maps the elements which become the images of the (based) meridian and longitude of $K'$ in $\pi'/\Gamma'_n$ to the elements which become the images of the (based) meridian and longitude of $K$ in $\pi/\Gamma_n$.
\end{proof}


\subsection{Relating $\theta$-invariants to links in $S^3$} \label{subsec:construct:S3}

By Proposition~\ref{prop:suminducesbasing}, any $\Gamma$-ambient connected sum $K'$ of a homotopically essential knot $K$ in the aspherical 3-manifold $M$ with a weakly Brunnian link $L\subset S^3$ with vanishing Milnor invariants of lengths $\leq n$ has a well-defined $n\upth$ lower central homotopy invariant $h_n(K')$ relative to $K$, hence a well-defined $n\upth$ lower central homology invariant $\theta_n(K')$ and Milnor invariant $\ol{\mu}_n(K')$. 
In this section, we analyze the invariant $\theta_n(K',\phi)$ using the specific $n$-basing $\phi$ constructed in the proof of Proposition~\ref{prop:suminducesbasing} and show that the difference $\theta_n(K',\phi)-\theta_n(K,\id)$ is measured by the (classical) Milnor invariants of $L$. 
Recall from Section~\ref{sec:Milnor} that the $n$-basing $\phi$ induces the homotopy class $h_n(K',\phi)\in[M,X_n(K)]_0$, and by definition $\theta_n(K',\phi)=h_n(K',\phi)_*[M]\in H_3(X_n(K))$. 
We say $\theta_n(K',\phi)$ \emph{vanishes} if $\theta_n(K',\phi)=\theta_n(K,\id)$. 

We first compare $h_n(K',\phi)$ to $h_n(K,\id)$.
By construction and the proof of Proposition~\ref{prop:suminducesbasing}, these maps may be taken to agree on the handlebody exterior $E_H$.
Additionally, $h_n(K',\phi)$ and $h_n(K,\id)$ map $\nu K'$ and $\nu K$, respectively, homeomorphically to $\nu K\subset X_n(K)$. 
In particular, the images of $\nu K'$ and $\nu K$ agree.
Taking these statements together, we see that the images of $h_n(K',\phi)$ and $h_n(K,\id)$ differ only on $\wh{E_{K'}}=\iota(\wh{E_{L}})$ and $\wh{E_K}=\iota(\wh{E_U})$.

Represent $[M]$ as a sum of simplices in two different ways so that the respective images in $X_n(K)$ of the simplices outside $\wh{E_{K'}}$ and $\wh{E_K}$ under the maps $h_n(K',\phi)$ and $h_n(K,\id)$ agree.
This is possible because the maps agree on those domains.
The difference $\theta_n(K',\phi)-\theta_n(K,\id)$ is then represented by the chain \[\sum_{\sigma'\subset \wh{E_{K'}}}(h_n(K',\phi)\circ\sigma')-\sum_{\sigma\subset\wh{E_K}}(h_n(K,\id)\circ\sigma),\] where the sums are taken over simplices which make up $\wh{E_{K'}}$ and $\wh{E_K}$, respectively.

We now examine the restrictions $h_n(K',\phi)|_{\wh{E_{K'}}}:\wh{E_{K'}}\to X_n(K)$ and $h_n(K,\id)|_{\wh{E_K}}:\wh{E_K}\to X_n(K)$ and show (after precomposing with the embedding $\iota$) that they can be extended to produce maps $\ol{h},\ol{h}\,':S^3\to X_n(K)$ such that \[\ol{h}\,'_*[S^3]-\ol{h}_*[S^3]=\theta_n(K',\phi)-\theta_n(K,\id)\in H_3(X_n(K)).\]
Moreover, the maps $\ol{h}$ and $\ol{h}\,'$ are directly related to the Milnor invariants of $U$ and $L$, respectively, via work of Orr \cite{Orr89}.

Since $h_n(K,\id)$ and $h_n(K',\phi)$ are maps of pairs from $(M,K)$ and $(M,K')$ to $(X_n(K),K)$, respectively, the restrictions in question map $\wh{E_K}$ and $\wh{E_{K'}}$ to the (reduced) mapping cylinder $M_{p_n}^\times$, which is a $K(\pi/\Gamma_n,1)$ (see Section~\ref{sec:Milnor}). 
Henceforth we denote this space by $K(\pi/\Gamma_n,1)$.
By construction and as seen in the proof of Proposition~\ref{prop:suminducesbasing}, the inclusion-induced homomorphisms $\wh{\pi}^U\to\pi$ and $\wh{\pi}^L\to\pi'$ factor through $\Gamma$ and $\Gamma'$, respectively.
The restrictions $h(K,\id)|_{\wh{E_K}}$ and $h(K',\phi)|_{\wh{E_{K'}}}$ therefore lift to $K(\Gamma/\Gamma_n,1)$ in such a way that their images agree on $\partial\wh{E_K}$ and $\partial\wh{E_{K'}}$. Precompose these lifts with the embedding $\iota$ to obtain maps $\wh{E_U}\to K(\Gamma/\Gamma_n,1)$ and $\wh{E_L}\to K(\Gamma/\Gamma_n,1)$.

Next, we extend these maps to the link exteriors in $S^3$ and obtain maps $h:E_U\to K(\Gamma/\Gamma_n,1)$ and $h':E_L\to K(\Gamma/\Gamma_n,1)$.
Extend first to $E_{U_0\subset H_0}$ and $E_{L_0\subset H_0}$, the exteriors of the distinguished components $U_0$ and $L_0$ in the genus $m-1$ handlebody $H_0^-$; see Figure~\ref{fig:weaklyBrunnianb}.
Borrowing notation from Sections~\ref{subsec:construct:wBrunnian} and~\ref{subsec:construct:Gamma}, these spaces are formed as boundary connected sums of $E_{A_0^B}$ with $\wh{E_U}$ and $\wh{E_L}$, respectively, taken along the disk $D_0$ with two meridional disks removed, where $E_{A_0^B}$ is the exterior of the arc $A_0^B$ in the 3-ball $B_0$; see Figure~\ref{fig:connsum2}.
Define the extensions to have the same image on $E_{A_0^B}$.
Such extensions are possible because the element $[\partial D_0]=\wh{m}_0(\wh{m}'_0)^{-1}$ maps trivially to $\pi/\Gamma_n$ (see Section~\ref{subsec:construct:Gamma}).
To complete the extensions to $E_U$ and $E_L$, we extend over the $(m-2)$ 2-handles $h_i$ (see Figure~\ref{fig:weaklyBrunniana}) whose attaching maps are nullhomotopic in $K(\Gamma/\Gamma_n,1)$, again defining the extensions to have the same image.
Thus, we have produced extensions $h:E_U\to K(\Gamma/\Gamma_n,1)$ and $h':E_L\to K(\Gamma/\Gamma_n,1)$.

These extensions induce homomorphisms $\pi^U/\pi^U_n\to\Gamma/\Gamma_n$ and $\pi^L/\pi^L_n\to\Gamma/\Gamma_n$ which by construction map $m_i\mapsto\mu_i$ for each $0\leq i\leq m-1$.
Since $U$ and $L$ have vanishing Milnor invariants of lengths $\leq n$, these homomorphisms factor through $F(\mu)/F(\mu)_n$, where $F(\mu)$ is the $m$-generator free group on $\mu=\{\mu_0,\dots,\mu_{m-1}\}$.
Note that $F(\mu)/F(\mu)_n$ includes into $\Gamma/\Gamma_n\cong F(\infty)/F(\infty)_n$, sending generators to generators, so these factorizations consist of an isomorphism (because Milnor's invariants vanish) followed by an inclusion.
Thus, $h$ and $h'$ factor through $K(F/F_n,1)$, where we now write $F=F(\mu)$.
Denote these factorizations by $h=k\circ h_0$ and $h'=k\circ h_0'$, where $h_0:E_U\to K(F/F_n,1)$, $h_0':E_L\to K(F/F_n,1)$, and $k:K(F/F_n,1)\to K(\Gamma/\Gamma_n,1)$.

Define $K_n=K(F/F_n,1)\cup_{(\vee\mu_i)}(\vee_{i=0}^{m-1}D^2)$ as in \cite{Orr89}, the space obtained by attaching $m$ disks $D^2$ to $K(F/F_n,1)$ along the generators of $F$, that is, the images of the meridians of $U$ (or $L$).
As in \cite{Orr89}, we may extend $h_0$ and $h_0'$ to $\ol{h_0}:S^3\to K_n$ and $\ol{h_0}\,':S^3\to K_n$, respectively, by projecting the components of $\nu U$ and $\nu L$ to the respective disks.
Such an extension is possible because the images of the longitudes of $U$ and $L$ are nullhomotopic in $K(F/F_n,1)$.
By construction, we have $\ol{h_0}=h^O_n(U,\id)$ and $\ol{h_0}\,'=h^O_n(L,\phi_0)\in\pi_3(K_n)$, where $h^O_n$ is Orr's invariant from \cite{Orr89}, and where $\phi_0$ is the $n$-basing for $L$ relative to $U$ from which the $n$-basing $\phi$ for $K'$ relative to $K$ is constructed as in the proof of Proposition~\ref{prop:suminducesbasing}.
Orr denotes the invariant $h^O_n$ by $\theta_n$, but we use the notation $h^O_n$ as in \cite{Stees24} to avoid confusion with the lower central homology invariant $\theta_n$ defined in Section~\ref{sec:Milnor} and to indicate the relationship with the lower central homotopy invariant $h_n$.
In fact, Proposition 8.7 of \cite{Stees24} implies that for links in $S^3$ and $n\geq 2$ the invariants $h_n$ relative to the unlink and the invariants $h^O_n$ are equivalent.
We choose to use the invariant $h^O_n$ in this situation because it simplifies the discussion below.
As in \cite{Stees24}, we write $\theta^O_n(L,\phi_0)=h^O_n(L,\phi_0)_*[S^3]=\ol{h_0}_*[S^3]\in H_3(K_n)=H_3(F/F_n)$.
The map $h^O_n(U,\id)$ is nullhomotopic, so $\theta^O_n(U,\id)=\ol{h_0}\,'_*[S^3]=0$.

Analogous to $K_n$, define $K_n^\Gamma=K(\Gamma/\Gamma_n,1)\cup_{(\vee\mu_i)}(\vee_{i=0}^{m-1}D^2)$, where we attach $m$ disks to the images of the same meridians in $K(\Gamma/\Gamma_n,1)$.
The map $k:K(F/F_n,1)\to K(\Gamma/\Gamma_n,1)$ extends to $\ol{k}:K_n\to K_n^\Gamma$ by the identity on the $m$ disks $D^2$, and the inclusion-induced map $i:K(\Gamma/\Gamma_n,1)\to K(\pi/\Gamma_n,1)$ extends to $\ol{i}:K_n^\Gamma\to X_n(K)$ because the meridians in $K_n^\Gamma$ to which the disks are attached are nullhomotopic in $X_n(K)$.
Our desired maps $\ol{h}:S^3\to X_n(K)$ and $\ol{h}\,':S^3\to X_n(K)$ are the compositions $\ol{h}=\ol{i}\circ\ol{k}\circ\ol{h_0}$ and $\ol{h}\,'=\ol{i}\circ\ol{k}\circ\ol{h_0}\,'$.

Consider $\ol{h}\,'_*[S^3]-\ol{h}_*[S^3]\in H_3(X_n(K))$.
Using a suitable decomposition of $[S^3]$ as a sum of simplices, and based on the construction of the maps $\ol{h}$ and $\ol{h}\,'$, we have 
\begin{align*}
\ol{h}\,'_*[S^3]-\ol{h}_*[S^3]&=\sum_{\sigma'\subset \wh{E_{K'}}}(h_n(K',\phi)\circ\sigma')-\sum_{\sigma\subset\wh{E_K}}(h_n(K,\id)\circ\sigma) \\
&=\theta_n(K',\phi)-\theta_n(K,\id)\in H_3(\pi/\Gamma_n)\leq H_3(X_n(K)).
\end{align*}
Note that we view $H_3(\pi/\Gamma_n)$ as a subgroup of $H_3(X_n(K))$ in light of Proposition~\ref{lem:H3}.
On the other hand,
\begin{align*}
\ol{h}\,'_*[S^3]-\ol{h}_*[S^3]&=\ol{i}_*\ol{k}_*\left(\theta^O_n(L,\phi_0)-\theta^O_n(U,\id)\right) \\
&=\ol{i}_*\ol{k}_*\left(\theta^O_n(L,\phi_0)\right),
\end{align*}
where $\theta^O_n(L,\phi_0)\in H_3(K_n)=H_3(F/F_n)$.
Thus, the difference of the $\theta$-invariants of $K$ and $K'$ is measured by the Milnor invariants of the link $L\subset S^3$.
We have proved the following.

\begin{proposition} \label{prop:difference}
Let $K'$ be a $\Gamma$-ambient connected sum of a homotopically essential knot $K$ in the aspherical 3-manifold $M$ with a weakly Brunnian link $L\subset S^3$ with vanishing Milnor invariants of lengths $\leq n$. Let $\phi$ be the $n$-basing for $K'$ relative to $K$ constructed in the proof of Proposition~\ref{prop:suminducesbasing}. Then \[(i_*\circ k_*)\left(\theta^O_n(L,\phi_0)\right)=\theta_n(K',\phi)-\theta_n(K,\id)\in H_3(\pi/\Gamma_n),\] where $\theta^O_n$ is Orr's invariant, and $k_*$ and $i_*$ are induced by the inclusions $F/F_n\hookrightarrow\Gamma/\Gamma_n$ and $\Gamma/\Gamma_n\hookrightarrow\pi/\Gamma_n$, respectively.
\end{proposition}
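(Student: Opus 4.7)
The plan is to localize the difference $\theta_n(K',\phi) - \theta_n(K,\id) \in H_3(X_n(K))$ to the subregion of $M$ where $K$ and $K'$ differ, and then identify the localized contribution with Orr's invariant of $L$ under a natural map of classifying spaces. First I would use the construction of $\phi$ from the proof of Proposition~\ref{prop:suminducesbasing} to choose representatives for $h_n(K,\id)$ and $h_n(K',\phi)$ which agree on the exterior $E_H$ of the embedded handlebody $H = \iota(H_0)$ and which map $\nu K, \nu K'$ homeomorphically onto $\nu K \subset X_n(K)$ in the same way. With this choice the two maps differ only on $\wh{E_K} = \iota(\wh{E_U})$ and $\wh{E_{K'}} = \iota(\wh{E_L})$. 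Then I would pick singular chain representatives for $[M]$, one for each map, whose simplices on $E_H \cup \nu K$ agree, so that subtracting the pushforwards leaves only simplices supported in $\wh{E_K}$ and $\wh{E_{K'}}$ to represent the difference.

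Next, the decisive observation is that $\wh{\pi}^U$ and $\wh{\pi}^L$ are normally generated by meridians and hence map into $\Gamma$ and $\Gamma'$ respectively, so the restricted maps lift through $K(\Gamma/\Gamma_n,1) \hookrightarrow K(\pi/\Gamma_n,1) \subset X_n(K)$. After precomposing with $\iota$, I would work with maps $\wh{E_U}, \wh{E_L} \to K(\Gamma/\Gamma_n,1)$ and extend them successively to $E_U$, $E_L$, and eventually $S^3$. The extension over the 3-ball filling across $D_0$ is possible because $[\partial D_0] = \wh{m}_0 (\wh{m}'_0)^{-1}$ becomes nullhomotopic by construction of $\phi$; the extension over the 2-handles $h_i$ is possible because their attaching curves represent commutators $[\wh{m}_i, \wh{l}_i]$ with longitudes lying in $\wh{\pi}^U_n$ and $\wh{\pi}^L_n$ by Lemma~\ref{lemma:ELhat} and the vanishing-Milnor hypothesis; and the final extension across $\nu U$ and $\nu L$, landing in $X_n(K)$, is possible because meridians bound disks after passage to $X_n(K)$. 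This yields maps $\ol{h}, \ol{h}': S^3 \to X_n(K)$ with $\ol{h}'_*[S^3] - \ol{h}_*[S^3] = \theta_n(K',\phi) - \theta_n(K,\id)$ by the localization of the first paragraph.

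Finally, I would factor each extension through $K(F/F_n,1)$ (using that Milnor's invariants of $U$ and $L$ vanish through length $n$, which forces the induced homomorphisms on link groups to factor through the free nilpotent quotient) and then through Orr's spaces $K_n$ and $K_n^\Gamma$, obtained by coning off meridians. Under these factorizations $\ol{h}$ and $\ol{h}'$ become the images of the Orr classes $h_n^O(U,\id)$ and $h_n^O(L,\phi_0)$ under the composite $K_n \xrightarrow{\ol{k}} K_n^\Gamma \xrightarrow{\ol{i}} X_n(K)$. Since $h_n^O(U,\id)$ is nullhomotopic, so $\theta_n^O(U,\id) = 0$, pushforward gives the stated identity, and the image automatically lands in the summand $H_3(\pi/\Gamma_n) \leq H_3(X_n(K))$ from Lemma~\ref{lem:H3} because $\ol{h}$ and $\ol{h}'$ factor through the mapping-cylinder factor $M_{p_n}^\times \cup \nu K$ rather than contributing to the $H_3(M)$-summand.

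The step I expect to be most delicate is the chain-level alignment in the first paragraph: one must verify that the two representative cycles for $[M]$ can be chosen to cancel outside the local region without introducing a spurious class from $H_3(M)$, which relies on the maps genuinely agreeing on $E_H$ up to the $n$-basing data, not merely being freely homotopic there. This is where the precise equality of $\phi$ with the identity on $\pi_1(E_H)$, built into the construction of $\phi$ through the pushout argument of Proposition~\ref{prop:suminducesbasing}, is essential.
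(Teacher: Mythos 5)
Your proposal is correct and follows essentially the same approach as the paper: localize the difference of $\theta$-invariants to the handlebody region where $K$ and $K'$ differ, lift the restricted maps to $K(\Gamma/\Gamma_n,1)$ using the normal generation of $\wh{\pi}^U$, $\wh{\pi}^L$ by meridians, extend through $E_U$, $E_L$ to $S^3$, and factor through $K(F/F_n,1)$ and Orr's spaces $K_n$, $K_n^\Gamma$ to identify the resulting class with $(i_*\circ k_*)(\theta^O_n(L,\phi_0))$ using $\theta^O_n(U,\id)=0$. The only cosmetic difference is the ordering of the last steps — you describe extending to $S^3$ with target $X_n(K)$ (citing that meridians bound disks in $\nu K$) before factoring through $K_n$, whereas the paper factors through $K(F/F_n,1)$ and extends into $K_n$ first (using that longitudes die in $K(F/F_n,1)$), then includes via $\ol{i}\circ\ol{k}$ into $X_n(K)$ — but both routes reach the same maps $\ol{h}$, $\ol{h}'$ and the argument is substantively identical.
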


\noindent We use $k_*$ and $i_*$ instead of $\ol{k}_*$ and $\ol{i}_*$ because of the following commutative diagram.
The first two vertical arrows are canonical isomorphisms given by Mayer-Vietoris sequences.

\begin{center}
\begin{tikzcd}
H_3(F/F_n) \arrow[d,equal] \arrow[r,"k_*"] & H_3(\Gamma/\Gamma_n) \arrow[d,equal] \arrow[r, "i_*"] & H_3(\pi/\Gamma_n) \arrow[d, hookrightarrow] \\
H_3(K_n) \arrow[r,"\ol{k}_*"'] & H_3(K_n^\Gamma) \arrow[r,"\ol{i}_*"'] & H_3(X_n(K))
\end{tikzcd}
\end{center}

We have thus reduced the problem of constructing a family of knots which are not almost-concordant to $K$ to finding a family of links in $S^3$ with vanishing Milnor invariants of lengths $\leq n$ whose length $n+1$ Milnor invariants survive and are distinguished in the images of inclusion-induced homomorphisms $H_3(F/F_n)\to H_3(\pi/\Gamma_n)$. 


\section{Proofs of main results} \label{sec:main}

We now turn to the main results of this paper.
In Section~\ref{subsec:main:family}, we prove Theorem~\ref{thm:family}, which applies the $\theta$-invariants to obstruct pairwise almost-concordance among families of $\Gamma$-ambient connected sums.
These families are indexed by groups $\ol{C}_n$ which are quotients of the images of the homomorphisms $i_*:H_3(\Gamma/\Gamma_n)\to H_3(\pi/\Gamma_n)$ which appeared at the end of Section~\ref{subsec:construct:S3}.
The following sections,~\ref{subsec:main:large} and~\ref{subsec:main:ex}, place some restrictions on $M$ and $K$ in order to analyze the groups $\ol{C}_n$ in detail.
In Section~\ref{subsec:main:large}, we prove Theorem~\ref{thm:infinite}, which supplies conditions under which the groups $\ol{C}_n$ are, in some sense, as large as possible.
We then apply Theorem~\ref{thm:infinite} to a specific family of examples and prove Corollary~\ref{cor:examplecor} in Section~\ref{subsec:main:ex}.

The results in this section, particularly in Sections~\ref{subsec:main:large} and~\ref{subsec:main:ex}, rely heavily on an analysis of the $\Z[\pi_1(M)]$-module structure on $H_*(\Gamma/\Gamma_n)$.
We postpone this analysis until Section~\ref{sec:zpi}, referencing results from that section as needed.


\subsection{Families of knots with distinct $\theta$-invariants} \label{subsec:main:family}

The purpose of this section is to prove the following main structural result of the paper.

\family

\noindent Theorem~\ref{thm:family} answers the \hyperref[conj:ac]{Almost-Concordance Conjecture} for the class $x$ in the 3-manifold $M$ if the set $\cup_{n\geq 2}(\ol{C}_n-\{0\})$ is infinite.
In Section~\ref{subsec:main:large}, we will analyze the $\ol{C}_n$ and provide sufficient conditions under which these groups are, in some sense, as large as possible.
In particular, each $\ol{C}_n$ is infinite under these conditions.

\begin{proof}
Fix a knot $K$ in the aspherical 3-manifold $M$ representing the nontrivial class $x\in[S^1,M]$.
For convenience, assume the basepoint of $M$ is on $\partial\nu K$, and base $K$ via a straight line segment from $\ast$ to $K$ in a $D^2$ fiber of $\nu K$, so that $[K]\in\pi_1(M)$ is well-defined.
We first describe how to construct a knot $K^n_\alpha\subset M$ given $\alpha\in \ol{C}_n$.
The knot $K^n_\alpha$ will be a $\Gamma$-ambient connected sum of $K$ with a Brunnian link $L^n_\alpha\subset S^3$.
Following notation of \cite{Orr89}, let \[C_n=\coker\left(H_3(\Gamma/\Gamma_{n+1})\to H_3(\Gamma/\Gamma_n)\right).\]
The homomorphism $H_3(\Gamma/\Gamma_n)\to H_3(\pi/\Gamma_n)$ induces a surjective homomorphism $C_n\twoheadrightarrow\ol{C}_n$.
Lift $\alpha$ to some class $\theta\in C_n$.

Recall from Proposition~\ref{prop:finfty} that $\Gamma/\Gamma_n\cong F(\infty)/F(\infty)_n$ generated by meridians of the preimage $p^{-1}(K)$ of $K$ under the universal covering map $p:\wt{M}\to M$.
These meridians are in bijective correspondence with left cosets of $\langle[K]\rangle\leq \pi_1(M)$, and this correspondence is determined by a choice of basepoint in $\wt{M}$ and, for $n>2$, choices of basepoint paths, as discussed in Section~\ref{subsec:construct:Gamma} and in more detail in Section~\ref{subsec:zpi:H1}.
Denote the (unbased) meridian corresponding to $g\langle[K]\rangle$ by $\mu_g$, where we consider $\mu_g=\mu_h$ if $g\langle[K]\rangle=h\langle[K]\rangle$.
We discuss the $\Z[\pi_1(M)]$-module structure on these meridians in more detail in Section~\ref{subsec:zpi:H1}.
In short, there is a $\pi_1(M)$-action on $H_k(\Gamma/\Gamma_n)$ for all $k$, and in particular on $H_1(\Gamma/\Gamma_n)=\Gamma/\Gamma_2$, induced by the short exact sequence \[1\to\Gamma/\Gamma_n\hookrightarrow\pi/\Gamma_n\twoheadrightarrow\pi_1(M)\to 1.\]
In Proposition~\ref{prop:cosets}, we show that this action corresponds to the action on cosets of $\langle[K]\rangle$ by left multiplication.

Now fix a representative for each coset of $\langle[K]\rangle$, and fix a basing for each corresponding meridian.
For convenience, we choose $1\in\pi_1(M)$ to represent the identity coset and the boundary of a meridional disk for the corresponding component of $p^{-1}(K)$ to be the based meridian.
For a set $\mathbf{g}=\{g_1,\dots,g_m\}$ of elements of $\pi_1(M)$ representing distinct cosets of $\langle[K]\rangle$, let $\mu_\mathbf{g}=\{\mu_{g_1},\dots,\mu_{g_m}\}$ denote the corresponding set of (based or unbased) meridians.
In Proposition~\ref{prop:colimC}, we prove that \[C_n=\colim_{\mathbf{g}}C_n^\mathbf{g},\] where $C_n^\mathbf{g}=\coker\left(H_3(F(\mu_\mathbf{g})/F(\mu_\mathbf{g})_{n+1})\to H_3(F(\mu_\mathbf{g})/F(\mu_\mathbf{g})_n)\right)$, and where $F(\mu_\mathbf{g})$ is the free group on the based meridians $\mu_\mathbf{g}$.
Furthermore, this colimit is over a system of split inclusions.
In Lemma~\ref{lem:actioncn}, we prove that the $\Z[\pi_1(M)]$-module structure on $H_3(\Gamma/\Gamma_n)$ induces a $\Z[\pi_1(M)]$-module structure on $C_n$.
With respect to the colimit structure on $C_n$, the action of $h\in\pi_1(M)$ induces an isomorphism $C_n^\mathbf{g}\to C_n^{h\cdot\mathbf{g}}$, where $h\cdot\mathbf{g}=\{hg_1,\dots,hg_m\}$; see the discussion in Section~\ref{subsec:zpi:action} following the proof of Lemma~\ref{lem:actioncn}.
This isomorphism agrees with the one induced by a ``relabeling isomorphism" $F(\mu_\mathbf{g})\xrightarrow{\cong}F(\mu_{h\cdot\mathbf{g}})$ defined by $\mu_{g_i}\mapsto\mu_{hg_i}$; see Definition~\ref{def:relabel}.
Recall from above that we write $\mu_{g_i}=\mu_{g_j}$ if and only if $g_i\langle[K]\rangle=g_j\langle[K]\rangle$ even though we fixed a coset representative and basepoint path in order to define a based meridian corresponding to each coset.

View $\theta\in C_n$ as the image of some $\theta_\mathbf{g}\in C_n^\mathbf{g}$, $\mathbf{g}=\{g_1,\dots,g_m\}$, under the inclusion $C_n^\mathbf{g}\hookrightarrow C_n$.
As part of our analysis of the groups $\ol{C}_n$ in Section~\ref{subsec:main:large}, we will show that the homomorphism $C_n\to\ol{C}_n$ factors through the module of coinvariants $(C_n)_{\pi_1(M)}$, so we may assume that $g_1=1$.
We combine work of Cochran \cite{Cochran91} and Orr \cite{Orr89} to realize $\theta_\mathbf{g}$ as an invariant of an $m$-component Brunnian link $L^n_\alpha\subset S^3$.
By Theorem 4 of \cite{Orr89}, all elements in $C_n^\mathbf{g}$ are realized by $m$-component links in $S^3$ with vanishing Milnor invariants of lengths $\leq n$.
Conversely, every $m$-component link with vanishing Milnor invariants of lengths $\leq n$ represents some class in $C_n^\mathbf{g}$.
Furthermore, Theorem 13 of \cite{Orr89} states that any two such links represent the same class in $C_n^\mathbf{g}$ if and only if they have the same Milnor invariants of length $n+1$.
Milnor's invariants are invariants of ordered links, and in this setting the links in $S^3$ corresponding to elements of $C_n^\mathbf{g}$ have components which are labeled by distinct elements of $\mathbf{g}$.

Realize $\theta_\mathbf{g}$ by some link $L\subset S^3$ with vanishing Milnor invariants of lengths $\leq n$, that is, obtain $L$ such that $[\theta_n^O(L,\phi_0')]=\theta_\mathbf{g}$, where $\phi_0'$ is some $n$-basing for $L$ relative to the $m$-component unlink.
Consider the collection of length $n+1$ Milnor invariants of $L$.
By Theorem 3.3 of \cite{Cochran91}, there is a link $L^n_\alpha\subset S^3$ with vanishing Milnor invariants of lengths $\leq n$ and the same length $n+1$ invariants as $L$ such that $L^n_\alpha$ is a connected sum of Brunnian links.
In Cochran's notation, $L^n_\alpha\in\langle\mathbb{B}(m,n)\rangle$.
Recall that while connected sum is not a well-defined operation on links, connected sum is well-defined with respect to the first nonvanishing Milnor invariants \cites{Cochran90 ,Orr89}.
Thus, by choosing the bands for the connected sums of Brunnian links carefully, we may replace $L^n_\alpha$ with a Brunnian link, which we still call $L^n_\alpha$, whose length $\leq n+1$ Milnor invariants still agree with those of $L$.
By Theorem 13 of \cite{Orr89}, $L^n_\alpha$ represents the class $\theta_\mathbf{g}\in C_n^\mathbf{g}$, that is, $[\theta_n^O(L^n_\alpha,\phi_0)]=\theta_\mathbf{g}$ for some $n$-basing $\phi_0$ relative to the unlink.
Because $L^n_\alpha$ is a Brunnian link, we may form a $\Gamma$-ambient connected sum of $K$ with $L^n_\alpha$ to produce a knot $K^n_\alpha\subset M$.
The labels on the components of $L^n_\alpha$ determine the correspondence between the meridians of $L^n_\alpha$ and the 1-handles of the handlebody used in the $\Gamma$-ambient connected sum operation; see Section~\ref{subsec:construct:Gamma} and Figure~\ref{fig:embed}.
Arranging $g_1=1$ above ensures that the meridian of the distinguished component of $L^n_\alpha$ (in the sense of Definition~\ref{def:wbrunnian}) is identified with the distinguished meridian (previously called $\mu_0$) of $K^n_\alpha$.

By Proposition~\ref{prop:suminducesbasing}, $K^n_\alpha$ admits an $n$-basing $\phi$ relative to $K$, hence $\theta_n(K^n_\alpha,\phi)$ is defined. By Proposition~\ref{prop:difference}, \[\theta_n^O(L^n_\alpha,\phi_0)\mapsto\theta_n(K^n_\alpha,\phi)-\theta_n(K,\id)\] under the composite $H_3(F(\mu_\mathbf{g})/F(\mu_\mathbf{g})_n)\to H_3(\Gamma/\Gamma_n)\to H_3(\pi/\Gamma_n)$.
This implies $[\theta_n(K^n_\alpha,\phi)-\theta_n(K,\id)]=\alpha\in \ol{C}_n$, since $[\theta_n^O(L,\phi_0)]=\theta_\mathbf{g}\in C_n^\mathbf{g}$ and $\theta_\mathbf{g}\mapsto\alpha$ under the composite $C_n^\mathbf{g}\hookrightarrow C_n\twoheadrightarrow\ol{C}_n$.
Thus, for each $n\geq 2$ and each element $\alpha$ of $\ol{C}_n$, we have produced a knot $K^n_\alpha$ which is homotopic to $K$ (see Section~\ref{subsec:construct:Gamma}) such that $[\theta_n(K^n_\alpha,\phi)-\theta_n(K,\id)]=\alpha\in\ol{C}_n$.

It is apparent by construction that $\{K\}\in\{K^m_\alpha\}\cap\{K^n_\beta\}$ for $m\neq n$.
We will see that this intersection contains only $K$ by showing all other knots in $\{K^n_\alpha\}_{n\geq 2}$ are not pairwise almost-concordant, by which we complete the proof.
For $\alpha\neq\beta\in\ol{C}_n$, we have \[[\theta_n(K^n_\alpha,\phi_\alpha)-\theta_n(K^n_\beta,\phi_\beta)]=[\theta_n(K^n_\alpha,\phi_\alpha)-\theta_n(K,\id)]-[\theta_n(K^n_\beta,\phi_\beta)-\theta_n(K,\id)]=\alpha-\beta\neq 0,\] so $\theta_n(K^n_\alpha,\phi_\alpha)\neq\theta_n(K^n_\beta,\phi_\beta)$.
By Proposition~\ref{prop:thetapair}, the invariant $\theta_n$ is independent of the choice of $n$-basing, so $\theta_n(K^n_\alpha)\neq\theta_n(K^n_\beta)$ and thus $K^n_\alpha$ and $K^n_\beta$ are not almost-concordant.
By construction, for $n>m$ and any $\beta\in\ol{C}_n$, the link $L^n_\beta$ has vanishing Milnor invariants of length $m$.
So $[\theta_m^O(L^n_\beta,\phi_0)]=0\in C_m^\mathbf{g}$.
This implies $[\theta_m(K^n_\beta,\phi)-\theta_m(K,\id)]=0\in\ol{C}_m$.
Thus, $K^n_\beta$ is not almost-concordant to any $K^m_\alpha$ unless $K^n_\beta=K^m_\alpha=K$.
\end{proof}


\subsection{Cases where the family $\{K^n_\alpha\}$ is large} \label{subsec:main:large}

Theorem~\ref{thm:family} does not make any claim on the sizes of the groups $\ol{C}_n$. 
We expect that $\cup_{n\geq 2}(\ol{C}_n-\{0\})$, which is in bijective correspondence with the family of knots constructed in Theorem~\ref{thm:family}, is often (and possibly always) infinite; however, analyzing the $\ol{C}_n$ without placing any additional restrictions on the 3-manifold $M$ and the fixed  knot $K$ seems difficult.
As we will see in Section~\ref{sec:zpi}, the $\pi_1(M)$-action on the meridians $\{\mu_g\}$ exerts a large influence on the $\Z[\pi_1(M)]$-module structures of $H_2(\Gamma/\Gamma_n)$ and $H_3(\Gamma/\Gamma_n)$, both of which play an important role in understanding the $\ol{C}_n$.
This action can vary widely depending on $M$ and the class $[K]\in[S^1,M]$.

In this section, we impose some further restrictions on the 3-manifolds and knots we consider in order to provide instances where the groups $\ol{C}_n$ are, in some sense, as large as possible.
One can think of the following Theorem~\ref{thm:infinite} as providing sufficient conditions under which Milnor's invariants of links in $S^3$ ``embed" as invariants of almost-concordance of knots in the 3-manifold $M$ homotopic to the fixed knot $K$.

\infinite

\noindent Note that $\lim_{n\to\infty}\mathcal{M}(n)=\infty$.
The number of linearly independent Milnor invariants of length $k$ for $m$-component links grows on the order $m^{k+1}/k^2$ \cite{Orr89}.
Thus, $\mathcal{M}(n)$ grows on order at least \[\frac{n^{n+1}}{n!n^2}=\frac{n^{n-1}}{n!}.\]

\begin{proof}
We study the homomorphism $H_3(\Gamma/\Gamma_n)\to H_3(\pi/\Gamma_n)$ involved in the definition of $\ol{C}_n$.
Consider the Lyndon-Hochschild-Serre spectral sequence associated to the short exact sequence of groups \[1\to\Gamma/\Gamma_n\hookrightarrow\pi/\Gamma_n\twoheadrightarrow\pi_1(M)\to 1.\]
This is a homology spectral sequence with \[E^2_{p,q}\cong H_p(\pi_1(M);H_q(\Gamma/\Gamma_n))\implies H_{p+q}(\pi/\Gamma_n).\]
The short exact sequence of groups yields a $\pi_1(M)$-action on the homology of $\Gamma/\Gamma_n$, hence a $\Z[\pi_1(M)]$-module structure on $H_*(\Gamma/\Gamma_n)$.
The coefficients on the $E^2$ page of the spectral sequence are twisted and defined using this $\pi_1(M)$-action.
Because $M$ is aspherical, $H_p(\pi_1(M);H_q(\Gamma/\Gamma_n))=H_p(M;H_q(\Gamma/\Gamma_n))$, and therefore $E^2_{p,q}=0$ for $p>3$.

The homomorphism $H_3(\Gamma/\Gamma_n)\to H_3(\pi/\Gamma_n)$ factors as \[H_3(\Gamma/\Gamma_n)\to H_0(M;H_3(\Gamma/\Gamma_n))\to H_3(\pi/\Gamma_n),\] the composition of the canonical homomorphism from $H_3(\Gamma/\Gamma_n)$ to its group of $\pi_1(M)$-coinvariants $H_0(M;H_3(\Gamma/\Gamma_n))=H_3(\Gamma/\Gamma_n)_{\pi_1(M)}$ with an edge homomorphism from the spectral sequence (see, for instance, \cite{DavisKirk}).
Sometimes the above composite is called an edge homomorphism, but we reserve this term for the second map to distinguish it from the first.
We study the homomorphisms in this factorization separately, beginning with the edge homomorphism.

Consider the $E^2$ page of the spectral sequence, the relevant parts of which are pictured in Figure~\ref{fig:spectralsequence}.
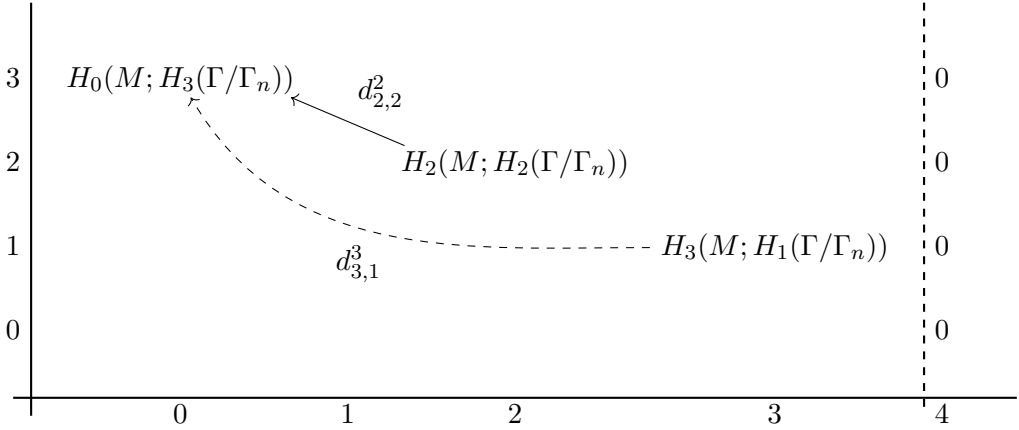
\begin{figure}
\centering
\begin{tikzpicture}
\matrix (m) [matrix of math nodes,nodes in empty cells,nodes={minimum width=5ex,minimum height=5ex,outer sep=-5pt},column sep=1ex,row sep=1ex]{
& & & & & \\
3 & H_0(M;H_3(\Gamma/\Gamma_n)) & & & & 0\\
2 & & & H_2(M;H_2(\Gamma/\Gamma_n)) & & 0\\
1 & & & & H_3(M;H_1(\Gamma/\Gamma_n)) & 0\\
0 & & & & & 0 \\
\quad\strut &   0  &  1  &  2  & 3 & 4 & \strut \\};
\draw[thick] (m-1-1.east) -- (m-6-1.east) ;
\draw[thick] (m-6-1.north) -- (m-6-7.north) ;
\draw[thick,dashed] (m-1-6.west) -- (m-6-6.west) ; 
\draw[->] (m-3-4.north west) to node[above right] {$d^2_{2,2}$} (m-2-2.south east);
\draw[shorten <=0.5em, ->, dashed] (m-4-5) to[out=180,in=300] node[below left] {$d^3_{3,1}$} (m-2-2);
\end{tikzpicture}
    \caption{The $E^2$ page of the Lyndon-Hochschild-Serre spectral sequence.}
    \label{fig:spectralsequence}
\end{figure}
We first observe that $H_3(M;H_1(\Gamma/\Gamma_n))=0$, so that the differential $d^3_{3,1}$ on the $E^3$ page provides no contribution to the computation of $E^\infty_{0,3}$.
We have isomorphisms \[H_3(M;H_1(\Gamma/\Gamma_n))\cong H^0(M;H_1(\Gamma/\Gamma_n))\cong \Hom_{\Z[\pi_1(M)]}\left(H_0(M;\mathbb{Z}[\pi_1(M)]),H_1(\Gamma/\Gamma_n)\right),\] the first following from Poincar\'{e} duality and the second following from the Universal Coefficient Spectral Sequence or standard facts about group cohomology \cite{Brown}.
But $H_0(M;\Z[\pi_1(M)])=\Z[\pi_1(M)]/\langle \{g-1\}_{g\in\pi_1(M)}\rangle\cong\Z$ is a module on which every $g\in\pi_1(M)$ acts trivially.
On the other hand, $H_1(\Gamma/\Gamma_n)=\Gamma/\Gamma_2$ is free abelian, generated by the (unbased) meridians of the preimage  $p^{-1}(K)$ of $K$ under the universal covering map $p:\wt{M}\to M$, which are in bijective correspondence with cosets of $\langle[K]\rangle$.
The $\pi_1(M)$-action on these meridians corresponds to left multiplication on cosets by Proposition~\ref{prop:cosets}.
To show $H_3(M;H_1(\Gamma/\Gamma_n))=0$, it suffices to see that all of $\pi_1(M)$ cannot act trivially on any nontrivial element of $H_1(\Gamma/\Gamma_n)$.

Suppose every $g\in \pi_1(M)$ acts trivially on the nontrivial element $m=\sum c_h\mu_h\in H_1(\Gamma/\Gamma_n)$, $c_h\in\Z$, where the sum is taken over distinct coset representatives and we assume each $c_h\neq 0$.
Because each $g\in\pi_1(M)$ permutes the meridians of $p^{-1}(K)$ and acts trivially on $m$, each $g$ must permute the meridians appearing in the sum.
Because the sum is finite, there exists $k\neq 0$ such that $g^k\cdot \mu_h=\mu_h$ for all $g\in\pi_1(M)$ and all $\mu_h$ appearing in the sum.
As $M$ is aspherical, $\pi_1(M)$ is torsion-free, so $g^k\neq 1$ for $g\neq 1$, $k\neq 0$.
Translating to cosets, we have $k\neq 0$ such that $g^kh\langle[K]\rangle=h\langle[K]\rangle$ for all $g\in\pi_1(M)$ and all $h$ appearing in the sum.
Thus $h^{-1}g^kh\in\langle[K]\rangle$ for all $g$, so for each $g$ we have $h^{-1}g^kh=[K]^i$ for some $i$.
But $h^{-1}g^kh=(h^{-1}gh)^k$, which implies $h^{-1}gh$ commutes with $[K]^i$.
By condition (2), for each $g$ we must have $h^{-1}gh=[K]^j$ for some $j$, hence $g\cdot h\langle[K]\rangle=gh\langle[K]\rangle=h\langle[K]\rangle$ for all $g\in\pi_1(M)$.
This cannot be, as the $\pi_1(M)$-action on meridians of $p^{-1}(K)$, and therefore cosets of $\langle[K]\rangle$, is transitive, and there is more than one coset of $\langle[K]\rangle$ because $\pi_1(M)$ is not cyclic.
Thus, there is no nontrivial $\Z[\pi_1(M)]$-module homomorphism $H_0(M;\Z[\pi_1(M)])\to H_1(\Gamma/\Gamma_n)$, and $H_3(M;H_1(\Gamma/\Gamma_n))=0$.

As $H_3(M;H_1(\Gamma/\Gamma_n))=0$, the edge homomorphism factors as \[H_0(M;H_3(\Gamma/\Gamma_n))\twoheadrightarrow H_0(M;H_3(\Gamma/\Gamma_n))/\im(d^2_{2,2})\hookrightarrow H_3(\pi/\Gamma_n),\] where the differential $d^2_{2,2}$ is pictured in Figure~\ref{fig:spectralsequence}.
By Corollary~\ref{cor:freeH2}, which we prove in Section~\ref{subsec:zpi:basic}, conditions (1)-(3) imply $H_2(\Gamma/\Gamma_n)$ is a free $\Z[\pi_1(M)]$-module.
Thus, $H_2(M;H_2(\Gamma/\Gamma_n))=0$ and so $\im(d^2_{2,2})=0$.
Therefore, we have an injection \[H_3(\Gamma/\Gamma_n)_{\pi_1(M)}=H_0(M;H_3(\Gamma/\Gamma_n))\hookrightarrow H_3(\pi/\Gamma_n).\]

We now turn our attention to the homomorphism $H_3(\Gamma/\Gamma_n)\twoheadrightarrow H_3(\Gamma/\Gamma_n)_{\pi_1(M)}$.
Moving closer to the definition of the set $\ol{C}_n$, we instead study $C_n=\coker(H_3(\Gamma/\Gamma_{n+1})\to H_3(\Gamma/\Gamma_n))$.
Lemma~\ref{lem:actioncn}, which we prove in Section~\ref{subsec:zpi:action}, states that the $\Z[\pi_1(M)]$-module structure on $H_3(\Gamma/\Gamma_n)$ descends to a $\Z[\pi_1(M)]$-module structure on $C_n$.
Thus, we may consider the homomorphism $C_n\twoheadrightarrow (C_n)_{\pi_1(M)}$.

Using the commutative diagram below, we see that $(C_n)_{\pi_1(M)}$ maps isomorphically onto $\ol{C}_n$.
Note that $H_3(\Gamma/\Gamma_{n+1})_{\pi_1(M)}\hookrightarrow H_3(\pi/\Gamma_{n+1})$ is injective by Corollary~\ref{cor:freeH2}, as conditions (1)-(3) do not depend on $n$.
Additionally, \[\im\left(H_3(\Gamma/\Gamma_{n+1})_{\pi_1(M)}\to H_3(\Gamma/\Gamma_n)_{\pi_1(M)}\right)\cong\im\left(H_3(\Gamma/\Gamma_{n+1})\to H_3(\pi/\Gamma_n)\right).\]
\begin{center}
\begin{tikzcd}
H_3(\Gamma/\Gamma_{n+1}) \arrow[r,twoheadrightarrow] \arrow[d] & H_3(\Gamma/\Gamma_{n+1})_{\pi_1(M)} \arrow[r,"\cong"] \arrow[d] & \im\left(H_3(\Gamma/\Gamma_{n+1})\to H_3(\pi/\Gamma_{n+1})\right) \arrow[d] \\
H_3(\Gamma/\Gamma_n) \arrow[r,twoheadrightarrow] \arrow[d,twoheadrightarrow] & H_3(\Gamma/\Gamma_n)_{\pi_1(M)} \arrow[d,twoheadrightarrow] \arrow[r,"\cong"] & \im\left(H_3(\Gamma/\Gamma_n)\to H_3(\pi/\Gamma_n)\right) \arrow[d,twoheadrightarrow] \\
C_n \arrow[r,twoheadrightarrow] & (C_n)_{\pi_1(M)} \arrow[r] & \ol{C}_n
\end{tikzcd}
\end{center}
By Corollary~\ref{cor:freeC}, which we prove in Section~\ref{subsec:zpi:action}, conditions (1)-(3) imply $C_n$ is a free $\Z[\pi_1(M)]$-module of rank at least $\mathcal{M}(n+1)$.
Thus, $\ol{C}_n\cong (C_n)_{\pi_1(M)}$ is a free abelian group whose rank is equal to the rank of $C_n$ as a $\Z[\pi_1(M)]$-module, and therefore at least $\mathcal{M}(n+1)$.
\end{proof}


\subsection{Examples: Surface bundles} \label{subsec:main:ex}

We now provide a family of examples of distinct $M$ and $x\in[S^1,M]$ which satisfy the conditions of Theorem~\ref{thm:infinite}.
As discussed in Section~\ref{sec:intro}, the examples consist of genus $g\geq 1$ surface bundles over $S^1$, where the fixed knot $K$ representing $x$ is a section of the bundle corresponding to a fixed point of the monodromy.
More precisely, \[M=\frac{\Sigma_g\times[0,1]}{(p,1)\sim(\psi(p),0)},\] where $\psi$ is the monodromy.
Without loss of generality, we may assume $\psi(p_0)=p_0$ for some $p_0\in\Sigma_g$.
Let $x$ be the class of the fixed knot $K=(\{p_0\}\times[0,1])/\sim$.

\examplecor

\begin{example} \label{ex:torusbundleanosov}
The torus bundle $M$ with monodromy \[\begin{pmatrix} 2 & 3 \\ 1 & 2 \end{pmatrix}\in SL(2,\Z)\] satisfies the conditions of Corollary~\ref{cor:examplecor}.  
Note that \[\pi_1(M)=\langle a,b,c\,|\,cac^{-1}=a^2b, cbc^{-1}=a^3b^2,[a,b]=1\rangle,\] where conjugation by the generator $c=[K]$ corresponds to the monodromy of the bundle.
As $c=[K]$ normally generates $\pi_1(M)$, there is no regular cover of $M$ to which $K$ lifts.
Thus, typical covering link techniques used to prove other cases of the \hyperref[conj:ac]{Almost-Concordance Conjecture}, as seen in \cites{MillerD,FNOP}, cannot be applied immediately to construct infinitely many almost-concordance classes within the homotopy class $[K]$.
As $[K]\in H_1(M)$ is primitive and infinite order, other techniques such as those seen in \cites{FNOP,NOPP} also cannot be applied immediately in this setting.
\end{example}

\begin{proof}[Proof of Corollary~\ref{cor:examplecor}]
It suffices to show that conditions (1)-(3) of Theorem~\ref{thm:infinite} are satisfied for the 3-manifold $M$ and class $x$ in question.
The fundamental group $\pi_1(M)$ is an extension \[1\to\pi_1(\Sigma_g)\hookrightarrow\pi_1(M)\twoheadrightarrow\Z\to 1,\]
where $x\mapsto 1$ under the surjection $\pi_1(M)\twoheadrightarrow\Z$.
Thus, condition (1) is immediate from the choice of $M$ and $x$.

We next prove condition (2) holds.
Suppose the centralizer of some power $x^j$ of $x$ is not cyclic.
We treat the cases $g=1$ and $g\geq 2$ separately.
Let $g=1$, and let $h\neq 1\in\pi_1(\Sigma_1)$ such that $x^j$ commutes with $hx^k$.
Then $[x^j,h]=[x^j,hx^k]=1$, so $x^jhx^{-j}=h$.
Thus, $\psi_*^j(h)=h$, where $\psi_*$ is the automorphism induced by $\psi$ on $\pi_1(\Sigma_1)=H_1(\Sigma_1)$.
Since $h$ is nontrivial in $H_1(\Sigma_1)$, some eigenvalue of $\psi^j_*$ is equal to 1.
Therefore, since the eigenvalues of $\psi_*$ are assumed to be real and positive, $\psi_*$ has 1 as an eigenvalue.
This contradicts the assumption that $\psi$ is Anosov, which implies no eigenvalue of $\psi_*$ is equal to 1 (see Section 11.4 of \cite{Martelli}).

Now let $g\geq 2$.
We employ results from 3-manifold theory summarized in Theorem 2.5.1 of \cite{AFW}.
Because the centralizer of $x^j$ is non-cyclic and $M$ is a closed 3-manifold, this theorem implies $M$ is Seifert-fibered or there exists a JSJ torus $T$ such that $\pi_1(T)$ is conjugate to the centralizer of $x^j$.
Since $\psi$ is pseudo-Anosov, $M$ is simple, hence has trivial JSJ decomposition, and is not a Seifert 3-manifold (see Section 11.4 of \cite{Martelli}), contradicting the theorem from \cite{AFW}.
Thus, condition (2) holds in all cases.

Condition (3) follows from known results concerning orderability of groups.
Because the eigenvalues of $\psi_*$ are real and positive, a result of Perron-Rolfsen implies that there is a total ordering on $\pi_1(\Sigma_g)$ preserved by $\psi$ \cite{PerronRolfsen}.
The Perron-Rolfsen result in fact implies that $\pi_1(M)$ is biorderable.
The left cosets of $\langle x\rangle\leq\pi_1(M)$ are in bijective correspondence with $\pi_1(\Sigma_g)$, and this correspondence preserves the action of $\pi_1(\Sigma_g)$ by left multiplication.
The ordering on $\pi_1(\Sigma_g)$ preserved by $\psi$ thus induces an ordering on left cosets of $\langle x\rangle$.
To see that this ordering on cosets is invariant under left multiplication by elements in $\pi_1(M)$, it suffices to prove that it is preserved by left multiplication by $x$.
For any coset $g\langle x\rangle$, \[x\cdot g\langle x\rangle=xgx^{-1}\langle x\rangle=\psi_*(g)\langle x\rangle,\] where $\psi_*$ now denotes the induced automorphism of $\pi_1(\Sigma_g)$.
Because $\psi_*$ preserves the ordering on $\pi_1(\Sigma_g)$, it preserves the induced ordering on cosets.
Thus, condition (3) holds.
The result now follows from Theorem~\ref{thm:infinite}.
\end{proof}

To conclude this section, we present one example which contrasts the conditions implied by assumptions (1)-(3) of Theorem~\ref{thm:infinite}.
This example indicates the large extent to which the $\pi_1(M)$-action on the cosets of $\langle[K]\rangle$ exerts an influence on which Milnor invariants of links in $S^3$ ``survive" as almost-concordance invariants of knots in $M$ homotopic to $K$.
In particular, we will see that $C_n$ is not always a free $\Z[\pi_1(M)]$-module.

\begin{example} \label{ex:torusbundleI}
Consider the torus bundle $M$ over $S^1$ whose monodromy is $-I\in SL(2,\Z)$, and let the fixed knot $K\subset M$ be a section of this bundle corresponding to a fixed point, as in Corollary~\ref{cor:examplecor}.
The meridians $\mu_g$ which generate $\Gamma/\Gamma_2$ are in bijective correspondence with $\Z^2$.
The action of $[K]$ on $\Gamma/\Gamma_2$ fixes $\mu_{(0,0)}$ and interchanges $\mu_{(1,0)}$ and $\mu_{(-1,0)}$.
Using notation from the proof of Theorem~\ref{thm:family} and Section~\ref{sec:zpi}, let $\mathbf{g}=\{(0,0),(1,0),(0,1)\}$, so that $\mu_\mathbf{g}=\{\mu_{(0,0)},\mu_{(1,0)},\mu_{(-1,0)}\}$.
Consider $C^2_\mathbf{g}\cong\Z$, which corresponds to Milnor invariants of length 3 for 3-component links labeled with elements of $\mathbf{g}$.
For any ordering on $\mathbf{g}$, the action of $[K]$ on $C_2$ fixes $\im(C^2_\mathbf{g}\hookrightarrow C_2)$ setwise and sends the generator corresponding to the Milnor triple linking invariant $\ol{\mu}(123)$ to its negative, the element corresponding to $\ol{\mu}(132)=-\ol{\mu}(123)$.
Thus, this generator has order at most 2 in $(C_2)_{\pi_1(M)}$, hence has order at most 2 in $\ol{C}_2$.
\end{example}


\section{The $\Z[\pi_1(M)]$-module structure on $H_*(\Gamma/\Gamma_n)$} \label{sec:zpi}

Recall from Section~\ref{subsec:main:large} that the short exact sequence \[1\to\Gamma/\Gamma_n\hookrightarrow\pi/\Gamma_n\twoheadrightarrow\pi_1(M)\to 1\] yields a left action of $\pi_1(M)$ on $H_*(\Gamma/\Gamma_n)$, endowing $H_*(\Gamma/\Gamma_n)$ with the structure of a $\Z[\pi_1(M)]$-module.
As we have seen, understanding this module structure allows one to compute the groups $\ol{C}_n$ from Theorem~\ref{thm:family} and, in particular, prove cases of the \hyperref[conj:ac]{Almost-Concordance Conjecture}, as in Theorem~\ref{thm:infinite} and Corollary~\ref{cor:examplecor}.
In this section, we analyze the $\Z[\pi_1(M)]$-module structures on $H_i(\Gamma/\Gamma_n)$ for $1\leq i\leq 3$ and prove technical statements upon which the results of Section~\ref{sec:main} stand.


\subsection{The $\pi_1(M)$-action on meridians} \label{subsec:zpi:H1}

We first analyze the $\Z[\pi_1(M)]$-module structure on $H_1(\Gamma/\Gamma_n)=\Gamma/\Gamma_2$.
As in Section~\ref{subsec:construct:Gamma}, assume that the basepoint $\ast$ of $E_K$ is on $\partial\nu K$, so there is a canonical choice of element of $\pi_1(M)$ which $K$ represents given by any longitude of $K$ passing through $\ast$.
Recall that the components or, equivalently, meridians of the preimage $p^{-1}(K)$ of $K$ under the universal covering map $p:\wt{M}\to M$ are in bijective correspondence with the left cosets of $\langle[K]\rangle\leq\pi_1(M)$.
We label these (unbased) meridians $\mu_g$, $g\in\pi_1(M)$, where we consider $\mu_g=\mu_h$ if and only if $g\langle[K]\rangle=h\langle[K]\rangle$.
These meridians generate $H_1(\Gamma/\Gamma_n)=\Gamma/\Gamma_2$, which is also the first homology of the $\pi_1(M)$-cover of $E_K$.
In this section, we explicitly describe the $\pi_1(M)$-action on the meridians $\mu_g$, hence the $\Z[\pi_1(M)]$-module structure on $H_1(\Gamma/\Gamma_n)$.
More specifically, we label the meridians $\mu_g$ in a prescribed way and show that the $\pi_1(M)$-action on meridians translates to left multiplication on left cosets of $\langle[K]\rangle$.

The identification \[\{\text{meridians of } p^{-1}(K)\}\longleftrightarrow\{\text{left cosets of } \langle[K]\rangle\}\] is not canonical.
Fix some basepoint $\wt{\ast}$ for the $\pi_1(M)$-cover $\wt{E_K}$ of $E_K$ which is a lift of the basepoint $\ast\in E_K$, and label the corresponding meridian with the identity coset $\langle[K]\rangle$.
Now consider some other meridian based at a translate of $\wt{\ast}$ by some deck transformation $\varphi_g$, $g\in\pi_1(M)$.
This translate of $\wt{\ast}$ is the endpoint of the lift of a loop representing $g\in\pi_1(M)$ to $\wt\ast$.
Label this meridian $\mu_g$, identifying it with the coset $g\langle[K]\rangle$.

\begin{proposition} \label{prop:cosets}
The identification $\mu_g\leftrightarrow g\langle[K]\rangle$ is well-defined. Moreover, the $\pi_1(M)$-action on $H_1(\Gamma/\Gamma_n)$ corresponds under this identification to the action of left multiplication on the left cosets of $\langle[K]\rangle$.
\end{proposition}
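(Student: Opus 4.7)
The plan is to realize the $\pi_1(M)$-action on $H_1(\Gamma/\Gamma_n) = \Gamma/\Gamma_2 \cong H_1(\wt{E_K})$ as the deck action on the $\pi_1(M)$-cover $\wt{E_K}\to E_K$, whose deck group is canonically $\pi/\Gamma = \pi_1(M)$. For each $g \in \pi_1(M)$ let $\varphi_g$ denote the corresponding deck transformation; then $\varphi_g(\wt{\ast})$ lies on a unique boundary component $C_g$ of $\partial\wt{E_K}$, and I will deduce the proposition from an analysis of the map $g \mapsto C_g$. Any two (based) meridians on a single boundary torus are conjugate in $\Gamma$ and hence represent the same class in the abelianization $\Gamma/\Gamma_2$, so the meridian class $\mu_g$ depends only on $C_g$, and well-definedness of the labeling reduces to the claim that $C_g = C_h$ if and only if $g\langle[K]\rangle = h\langle[K]\rangle$.

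The central step, which I expect to be the main work of the proof, is to identify the setwise stabilizer of the distinguished component $C_1$ containing $\wt{\ast}$ as exactly $\langle[K]\rangle \leq \pi_1(M)$. By standard covering space theory, this stabilizer is the image of the composite $\pi_1(\partial\nu K,\ast) \to \pi_1(E_K,\ast) = \pi \twoheadrightarrow \pi_1(M)$. The standing convention that $\ast \in \partial\nu K$ (with $K$ based by a straight line segment in a $D^2$-fiber of $\nu K$) gives $\pi_1(\partial\nu K) = \langle\mu,\lambda\rangle$ where the meridian $\mu$ lies in $\Gamma$ and maps trivially while the longitude $\lambda$ maps to $[K]$, so the image is exactly $\langle[K]\rangle$. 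Orbit--stabilizer then yields a $\pi_1(M)$-equivariant bijection between components of $\partial\wt{E_K}$ and left cosets $\pi_1(M)/\langle[K]\rangle$ sending $C_g \mapsto g\langle[K]\rangle$, which settles well-definedness.

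For the second assertion, the action of $h \in \pi_1(M)$ on $H_1(\wt{E_K})$ is induced by $\varphi_h$. Since $\varphi_h$ is an orientation-preserving homeomorphism that permutes boundary components and sends meridians to meridians compatibly with the orientation inherited from $K$, one computes $\varphi_h(C_g) = \varphi_h\varphi_g(C_1) = \varphi_{hg}(C_1) = C_{hg}$, and $\varphi_h$ carries the meridian class on $C_g$ to the meridian class on $C_{hg}$. Thus $h\cdot\mu_g = \mu_{hg}$ in $\Gamma/\Gamma_2$, which corresponds under the identification to $h\cdot g\langle[K]\rangle = hg\langle[K]\rangle$, namely left multiplication on left cosets. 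Apart from the stabilizer identification above, the only detail left to verify is the orientation compatibility of the meridian-to-meridian correspondence, which is immediate from the fact that deck transformations are orientation-preserving.
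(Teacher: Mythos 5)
Your proof is correct and takes essentially the same approach as the paper. Identifying the setwise stabilizer of the distinguished boundary component with the image of $\pi_1(\partial\nu K)\to\pi_1(M)=\langle[K]\rangle$ and then invoking orbit--stabilizer is a cleaner packaging of the paper's hands-on two-directional argument (that $\varphi_{x^k}$ fixes the distinguished component, and conversely that two meridians on the same component differ by a lift of a power of $[K]$), and your computation $\varphi_h\varphi_g=\varphi_{hg}$ for the action matches the paper's observation that conjugation $\wt{g}\mu_h\wt{g}^{-1}$ corresponds to the meridian based at the translate by $gh$.
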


\begin{proof}
Write $x=[K]$, and suppose $h\in g\langle x\rangle$. Then $h=gx^k$ for some $k\in\Z$.
The deck transformation $\varphi_h$ decomposes as $\varphi_h=\varphi_g\varphi_{x^k}$.
Deck transformations induce bijections on components of $p^{-1}(K)$, and the deck transformation $\varphi_{x^k}$ fixes the distinguished component containing $\wt{\ast}$.
Equivalently, the lift of a loop representing $gx^k$ first moves from the distinguished component to the one containing the meridian $\mu_g$ and then travels along that component.
Thus, $\mu_h=\mu_g$.
Conversely, two meridians $\mu_g$ and $\mu_h$ lying on the same component of $p^{-1}(K)$ and based at two translates of $\wt{\ast}$ differ by a lift of $x^k$.
So $\mu_h$ corresponds to the lift of a loop representing $gx^k$, and $g\langle x\rangle=h\langle x\rangle$.
Thus, the identification $\mu_g\leftrightarrow g\langle[K]\rangle$ is well-defined.

Recall that the $\pi_1(M)$-action on $H_*(\Gamma/\Gamma_n)$ is induced by conjugation of $\pi/\Gamma_n$ on $\Gamma/\Gamma_n$ (see, for instance, Section III.8 of \cite{Brown}).
In the case of $H_1(\Gamma/\Gamma_n)=\Gamma/\Gamma_2$, this conjugation-induced action is conjugation itself: $\Gamma/\Gamma_2$ is normal in $\pi/\Gamma_2$, which acts by conjugation.
Since $\Gamma/\Gamma_2$ is abelian, this descends to a conjugation action by $\pi_1(M)$.
If we think of $\mu_h$ as an element of $\pi$, basing the meridian of $K$ using a loop representing $h\in\pi_1(M)$, then $g\cdot\mu_h$ can be computed by representing $g$ by some element $\wt{g}\in\pi$ and viewing $\wt{g}\mu_h\wt{g}^{-1}$ as an element of $\Gamma/\Gamma_2$.
In the $\pi_1(M)$-cover $\wt{E_K}$, the resulting meridian is based at the translate of $\wt{\ast}$ obtained by lifting a loop representing $gh$.
Thus, $g\cdot\mu_h=\mu_{gh}$.
\end{proof}


\subsection{A colimit structure on $H_3(\Gamma/\Gamma_n)$} \label{subsec:zpi:colimH}

Our next goal, which we accomplish over Sections~\ref{subsec:zpi:colimH}-\ref{subsec:zpi:action}, is to analyze the $\pi_1(M)$-action on the groups $C_n$ which map onto the groups $\ol{C}_n$ from Theorem~\ref{thm:family}.
As we saw in Section~\ref{subsec:main:large}, the surjection $C_n\twoheadrightarrow\ol{C}_n$ factors through the coinvariants $(C_n)_{\pi_1(M)}$, so understanding this action is crucial for understanding $\ol{C}_n$.
The $\pi_1(M)$-action on the $C_n$ is closely related to the $\Z[\pi_1(M)]$-module structure on $H_3(\Gamma/\Gamma_n)$.
In order to parse these $\pi_1(M)$-actions, we describe colimit structures on $H_3(\Gamma/\Gamma_n)$ and $C_n$ in this section and Section~\ref{subsec:zpi:colimC}, respectively.
Each group is a colimit over a system of split inclusions, and in the case of $C_n$ the $\pi_1(M)$-action behaves nicely with respect to this colimit structure.

For this section through Section~\ref{subsec:zpi:basic}, fix coset representatives for the left cosets of $\langle[K]\rangle\leq\pi_1(M)$.
As in Section~\ref{subsec:zpi:H1}, label the meridians of $p^{-1}(K)$ using these coset representatives.
The resulting set of unbased meridians $\{\mu_g\}$ generate $\Gamma/\Gamma_2$; however, to obtain generators of $\Gamma/\Gamma_n$ for $n>2$ we choose basepoint paths in the $\pi_1(M)$-cover $\wt{E_K}$.
For the remainder of this section through Section~\ref{subsec:zpi:basic}, we fix a basepoint path for each meridian and still call the resulting set of meridians $\{\mu_g\}$.
For $\mathbf{g}=\{g_1,\dots,g_k\}$ a finite set of representatives for distinct cosets, we let $\mu_{\mathbf{g}}=\{\mu_{g_1},\dots,\mu_{g_k}\}$ denote the corresponding set of based meridians.

Consider the directed system consisting of groups $H_\mathbf{g}^n=H_3(F(\mu_{\mathbf{g}})/F(\mu_{\mathbf{g}})_n)$, where $F(\mu_\mathbf{g})$ is the free group on the set of meridians $\mu_\mathbf{g}$, along with inclusion-induced homomorphisms $\psi_{\mathbf{g},\mathbf{h}}:H_\mathbf{g}^n\to H_\mathbf{h}^n$ for $\mathbf{g}\subset\mathbf{h}$.
We will omit the superscript $n$ from our notation when it is understood from context, simply writing $H_\mathbf{g}$.

\begin{proposition} \label{prop:colimH}
The directed system $\left(\{H_\mathbf{g}\},\{\psi_{\mathbf{g},\mathbf{h}}\}\right)$ consists entirely of split inclusions and has colimit \[H_3(\Gamma/\Gamma_n)=\colim_{\mathbf{g}}H_\mathbf{g}.\]
\end{proposition}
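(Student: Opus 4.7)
The plan is to realize $H_3(\Gamma/\Gamma_n)$ as the desired filtered colimit by running through three successive functors that each preserve (filtered) colimits, and then separately to exhibit explicit retractions of the structure maps. The indexing poset of finite sets $\mathbf{g}$ of coset representatives (under inclusion) is directed, so throughout ``colimit'' means filtered colimit.

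By Proposition~\ref{prop:finfty}, $\Gamma/\Gamma_n\cong F(\infty)/F(\infty)_n$, where $F(\infty)$ is free on the set $S=\bigcup_\mathbf{g}\mu_\mathbf{g}$ of based meridians. First, $S=\colim_\mathbf{g}\mu_\mathbf{g}$ in $\mathbf{Set}$, and since the free group functor is left adjoint to the forgetful functor $\mathbf{Grp}\to\mathbf{Set}$, it preserves colimits, giving $F(\infty)=\colim_\mathbf{g}F(\mu_\mathbf{g})$. Second, the nilpotentization functor $G\mapsto G/G_n$ is left adjoint to the inclusion of $(n-1)$-step nilpotent groups into $\mathbf{Grp}$, so $F(\infty)/F(\infty)_n=\colim_\mathbf{g}F(\mu_\mathbf{g})/F(\mu_\mathbf{g})_n$. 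Third, group homology commutes with filtered colimits of groups (a standard fact, for instance via the bar construction and the corresponding statement for singular homology of spaces), so applying $H_3$ yields $H_3(\Gamma/\Gamma_n)=\colim_\mathbf{g}H_\mathbf{g}$. By naturality, the homomorphisms induced by the universal cones of these colimits coincide with the inclusion-induced maps $\psi_{\mathbf{g},\mathbf{h}}$.

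For the split-inclusion claim, given $\mathbf{g}\subset\mathbf{h}$, I would define a retraction $r_{\mathbf{h},\mathbf{g}}\colon F(\mu_\mathbf{h})\twoheadrightarrow F(\mu_\mathbf{g})$ that is the identity on each $\mu_g$ with $g\in\mathbf{g}$ and that sends each generator indexed by $\mathbf{h}\setminus\mathbf{g}$ to $1$. This retracts the inclusion $F(\mu_\mathbf{g})\hookrightarrow F(\mu_\mathbf{h})$, carries $F(\mu_\mathbf{h})_n$ into $F(\mu_\mathbf{g})_n$, and hence descends to a group-theoretic retraction on $n\upth$ lower central quotients; applying $H_3$ exhibits $\psi_{\mathbf{g},\mathbf{h}}$ as a split inclusion. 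The main obstacle is expository rather than conceptual: the three colimit-preservation facts should be justified or cited at an appropriate level of detail. One can in fact bypass the categorical machinery for the first two steps by using the retractions just constructed to verify directly that the natural map $F(\mu_\mathbf{g})/F(\mu_\mathbf{g})_n\to F(\infty)/F(\infty)_n$ is injective with image the subgroup generated by $\mu_\mathbf{g}$, and that every element of the target is represented by a word in finitely many generators, from which the universal property of the colimit can be checked by hand.
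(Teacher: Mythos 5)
Your proposal is correct and follows essentially the same route as the paper: the retraction $F(\mu_\mathbf{h})\twoheadrightarrow F(\mu_\mathbf{g})$ killing the extra generators gives the splitting on lower central quotients and hence on $H_3$, and the colimit statement follows because homology commutes with filtered colimits. The paper's proof simply asserts $\Gamma/\Gamma_n=\colim_\mathbf{g}F(\mu_\mathbf{g})/F(\mu_\mathbf{g})_n$ without further comment, whereas you justify that identification via the left-adjoint arguments for free groups and nilpotentization; this is a welcome elaboration of a step the paper leaves implicit, not a different method.
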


\begin{proof}
Suppose $\mathbf{g}\subset\mathbf{h}$.
Then the composition $F(\mu_\mathbf{g})\hookrightarrow F(\mu_\mathbf{h})\twoheadrightarrow F(\mu_\mathbf{g})$ is the identity, where the first map is the inclusion and the second map kills the generators $\mu_{\mathbf{h}-\mathbf{g}}$.
This induces a splitting \[F(\mu_\mathbf{g})/F(\mu_\mathbf{g})_n\hookrightarrow F(\mu_\mathbf{h})/F(\mu_\mathbf{h})_n\twoheadrightarrow F(\mu_\mathbf{g})/F(\mu_\mathbf{g})_n\] which, in turn, induces a splitting $H_\mathbf{g}\hookrightarrow H_\mathbf{h}\twoheadrightarrow H_\mathbf{g}$.
Because homology commutes with colimits, we have \[H_3(\Gamma/\Gamma_n)=H_3\left(\colim_\mathbf{g}F(\mu_\mathbf{g})/F(\mu_\mathbf{g})_n\right)=\colim_\mathbf{g} H_\mathbf{g}.\]
\end{proof}

We next examine the colimit structure on $H_3(\Gamma/\Gamma_n)$ more closely.
In particular, we consider the intersections of the images of $H_\mathbf{g}\hookrightarrow H_{\mathbf{g}\cup\mathbf{h}}$ and $H_\mathbf{h}\hookrightarrow H_{\mathbf{g}\cup\mathbf{h}}$ for arbitrary finite $\mathbf{g},\mathbf{h}$.

\begin{definition} \label{def:initialH}
For $\theta_\mathbf{g}\mapsto\theta$ under $H_\mathbf{g}\hookrightarrow H_3(\Gamma/\Gamma_n)$, we say $\theta_\mathbf{g}$ is \emph{initial} for $\theta$ if there does not exist $\theta_\mathbf{h}\in H_\mathbf{h}$ with $\mathbf{h}\subsetneq \mathbf{g}$ and $\theta_\mathbf{h}\mapsto\theta$.
\end{definition}

\noindent Note that any such $\theta_\mathbf{h}$ as in the above definition satisfies $\psi_{\mathbf{g},\mathbf{h}}(\theta_\mathbf{h})=\theta_\mathbf{g}$ because the colimit structure on $H_3(\Gamma/\Gamma_n)$ consists only of inclusions.

\begin{proposition} \label{prop:initialH}
For each element $\theta\in H_3(\Gamma/\Gamma_n)$, there exists a unique initial element.
\end{proposition}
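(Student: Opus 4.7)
The plan is to establish existence via a finite minimality argument and uniqueness by exploiting the retractions $r^\mathbf{k}_\mathbf{g} : H_\mathbf{k} \twoheadrightarrow H_\mathbf{g}$ underlying the split inclusions of Proposition~\ref{prop:colimH}.

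For existence, $\theta$ lies in the image of some $H_\mathbf{g} \hookrightarrow H_3(\Gamma/\Gamma_n)$ since the colimit is indexed over finite sets $\mathbf{g}$. Among the non-empty collection of such $\mathbf{g}$, I choose one of minimal cardinality. No proper subset of $\mathbf{g}$ can also admit $\theta$ in its image (it would have strictly smaller cardinality), so the unique preimage $\theta_\mathbf{g} \in H_\mathbf{g}$ is initial by definition.

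For uniqueness, suppose $\theta_\mathbf{g} \in H_\mathbf{g}$ and $\theta_\mathbf{h} \in H_\mathbf{h}$ are both initial for $\theta$, and set $\mathbf{k} = \mathbf{g} \cup \mathbf{h}$ and $\mathbf{d} = \mathbf{g} \cap \mathbf{h}$. Because the colimit of Proposition~\ref{prop:colimH} is filtered along the injective inclusions $\psi_{-,-}$, the two preimages already agree in $H_\mathbf{k}$:
\begin{equation*}
\psi_{\mathbf{g},\mathbf{k}}(\theta_\mathbf{g}) = \psi_{\mathbf{h},\mathbf{k}}(\theta_\mathbf{h}).
\end{equation*}
Let $r^\mathbf{k}_\mathbf{g} : H_\mathbf{k} \twoheadrightarrow H_\mathbf{g}$ and $r^\mathbf{h}_\mathbf{d} : H_\mathbf{h} \twoheadrightarrow H_\mathbf{d}$ denote the retractions induced on $H_3$ by the free group quotients sending each generator outside the target set to $1$; these are precisely the splittings produced in Proposition~\ref{prop:colimH}. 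For each generator $\mu \in \mu_\mathbf{h}$, both compositions $r^\mathbf{k}_\mathbf{g} \circ \psi_{\mathbf{h},\mathbf{k}}$ and $\psi_{\mathbf{d},\mathbf{g}} \circ r^\mathbf{h}_\mathbf{d}$ fix $\mu$ when $\mu \in \mu_{\mathbf{g}\cap\mathbf{h}}$ and send it to $1$ otherwise, so these two free group homomorphisms agree and hence induce the same map on $H_3$. Applying $r^\mathbf{k}_\mathbf{g}$ to the displayed equality therefore yields $\theta_\mathbf{g} = \psi_{\mathbf{d},\mathbf{g}}(r^\mathbf{h}_\mathbf{d}(\theta_\mathbf{h}))$. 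Thus $\theta_\mathbf{d} := r^\mathbf{h}_\mathbf{d}(\theta_\mathbf{h}) \in H_\mathbf{d}$ also maps to $\theta$ under $H_\mathbf{d} \hookrightarrow H_3(\Gamma/\Gamma_n)$, and initiality of $\theta_\mathbf{g}$ forces $\mathbf{g} \subseteq \mathbf{d} = \mathbf{g} \cap \mathbf{h}$, i.e., $\mathbf{g} \subseteq \mathbf{h}$. By symmetry $\mathbf{h} \subseteq \mathbf{g}$, so $\mathbf{g} = \mathbf{h}$, and then injectivity of $H_\mathbf{g} \hookrightarrow H_3(\Gamma/\Gamma_n)$ gives $\theta_\mathbf{g} = \theta_\mathbf{h}$.

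The only real technical point is the naturality identity $r^\mathbf{k}_\mathbf{g} \circ \psi_{\mathbf{h},\mathbf{k}} = \psi_{\mathbf{d},\mathbf{g}} \circ r^\mathbf{h}_\mathbf{d}$, which reduces to an immediate check on the generators of $F(\mu_\mathbf{h})$; everything else is filteredness of the colimit combined with the finite minimality argument used for existence.
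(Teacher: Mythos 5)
Your proof is correct, and your uniqueness argument takes a genuinely different route from the paper's. The paper establishes the key containment
\[
\im\bigl(H_\mathbf{g}\hookrightarrow H_{\mathbf{g}\cup\mathbf{h}}\bigr)\cap\im\bigl(H_\mathbf{h}\hookrightarrow H_{\mathbf{g}\cup\mathbf{h}}\bigr)\subseteq\im\bigl(H_{\mathbf{g}\cap\mathbf{h}}\hookrightarrow H_{\mathbf{g}\cup\mathbf{h}}\bigr)
\]
geometrically, by invoking Orr's realization of classes in $H_3(F(\mu_\mathbf{g})/F(\mu_\mathbf{g})_n)$ by links in $S^3$ and then observing that a class lying in both images can be represented by a link whose extra components form a split unlink. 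You instead prove the same containment purely algebraically: the commutative square
\[
r^\mathbf{k}_\mathbf{g}\circ\psi_{\mathbf{h},\mathbf{k}}=\psi_{\mathbf{d},\mathbf{g}}\circ r^\mathbf{h}_\mathbf{d},\qquad\mathbf{d}=\mathbf{g}\cap\mathbf{h},\ \mathbf{k}=\mathbf{g}\cup\mathbf{h},
\]
holds already on free-group generators and hence after applying the functor $G\mapsto H_3(G/G_n)$, and applying $r^\mathbf{k}_\mathbf{g}$ to the equality $\psi_{\mathbf{g},\mathbf{k}}(\theta_\mathbf{g})=\psi_{\mathbf{h},\mathbf{k}}(\theta_\mathbf{h})$ then produces a preimage in $H_\mathbf{d}$. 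Your approach buys independence from any representability of classes in $H_3(F/F_n)$ by links, and uses only the functoriality of the colimit diagram; the paper's approach gives a concrete geometric picture that is reused elsewhere (in the proof of the analogous statement for $C_n$, Proposition~\ref{prop:initialC}, where the passage to cokernels makes the link-theoretic picture particularly convenient). Both correctly reduce uniqueness to the intersection identity plus initiality; the existence argument is essentially the same in both (finite minimality over the index set $\mathbf{g}$).
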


\begin{proof}
Fix $\theta\in H_3(\Gamma/\Gamma_n)$.
First, note that $\theta$ has at least one initial element.
Because $H_3(\Gamma/\Gamma_n)=\colim_\mathbf{g}H_\mathbf{g}$, there exists $\mathbf{g}$ and $\theta_\mathbf{g}\in H_\mathbf{g}$ with $\theta_\mathbf{g}\mapsto\theta$.
Since $\mathbf{g}$ is a finite set, we are guaranteed to find an initial element mapping to $\theta$ after searching for one inductively based on the size of $\mathbf{g}$.

To show uniqueness, it suffices to show that \[\im(H_\mathbf{g}\hookrightarrow H_{\mathbf{g}\cup\mathbf{h}})\cap\im(H_\mathbf{h}\hookrightarrow H_{\mathbf{g}\cup\mathbf{h}})=\im(H_{\mathbf{g}\cap\mathbf{h}}\hookrightarrow H_{\mathbf{g}\cup\mathbf{h}})\] for all $\mathbf{g},\mathbf{h}$.
For suppose $\theta_\mathbf{g}$ and $\theta_\mathbf{h}$ were both initial for $\theta$.
Because $H_{\mathbf{g}\cup\mathbf{h}}\hookrightarrow H_3(\Gamma/\Gamma_n)$ is injective, $\theta_\mathbf{g}$ and $\theta_\mathbf{h}$ must have a common image $\theta_{\mathbf{g}\cup\mathbf{h}}$ in $H_{\mathbf{g}\cup\mathbf{h}}$.
If the above equality of sets holds, then there exists some $\theta_{\mathbf{g}\cap\mathbf{h}}\in H_{\mathbf{g}\cap\mathbf{h}}$ with $\theta_{\mathbf{g}\cap\mathbf{h}}\mapsto\theta_\mathbf{g}\in H_\mathbf{g}$ and $\theta_{\mathbf{g}\cap\mathbf{h}}\mapsto\theta_\mathbf{h}\in H_\mathbf{h}$.
Since $\theta_\mathbf{g}$ and $\theta_\mathbf{h}$ are initial, we must have $\mathbf{g}=\mathbf{g}\cap\mathbf{h}=\mathbf{h}$.
We obtain $\theta_\mathbf{g}=\theta_{\mathbf{g}\cap\mathbf{h}}=\theta_\mathbf{h}$ as the directed system consists only of inclusions.

The reverse inclusion $\supseteq$ for the desired equality of sets is immediate from the structure of the directed system.
We prove the forward inclusion $\subseteq$ by leveraging an interpretation of the $H_\mathbf{g}$ due to Orr \cite{Orr89}.
We give a geometric interpretation of the splitting $H_\mathbf{k'}\hookrightarrow H_\mathbf{k}\twoheadrightarrow H_\mathbf{k'}$ for $\mathbf{k'}\subset\mathbf{k}$.

Given $\theta_\mathbf{k'}\in H_\mathbf{k'}$, represent $\theta_\mathbf{k'}$ by a $|\mathbf{k'}|$-component based link $L_\mathbf{k'}\subset S^3$, which is possible by Theorem 4 of \cite{Orr89}.
In other words, in the notation of Section~\ref{subsec:construct:S3}, $\theta_\mathbf{k'}=\theta_n^O(L_\mathbf{k'},\phi_0)$ for some $n$-basing $\phi_0$ relative to the $|\mathbf{k'}|$-component unlink.
(Technically, we must choose an ordering on the elements of $\mathbf{k'}$, which produces the ordered based link $L_\mathbf{k'}$ whose components can be thought of as labeled by the elements of $\mathbf{k'}$.
For the purposes of this argument, we only need an ordering on $\mathbf{k}$ which restricts to one on $\mathbf{k'}$, although we could choose a total ordering on all coset representatives if desired.)

The image $\theta_\mathbf{k}$ of $\theta_\mathbf{k'}$ in $H_\mathbf{k}$ can be represented by $L_\mathbf{k'}$ along with a split unlink whose components correspond to elements of $\mathbf{k}-\mathbf{k'}$.
This describes the injection $H_\mathbf{k'}\hookrightarrow H_\mathbf{k}$.
To describe the surjection $H_\mathbf{k}\twoheadrightarrow H_\mathbf{k'}$, consider some $\theta_\mathbf{k}\in H_\mathbf{k}$, and represent it by a $|\mathbf{k}|$-component link $L_\mathbf{k}$.
The surjection sends $\theta_\mathbf{k}$ to an element of $H_\mathbf{k'}$ which is represented by the sublink obtained from $L_\mathbf{k}$ by deleting components corresponding to elements of $\mathbf{k}-\mathbf{k'}$.

Now consider $\theta_\mathbf{k}\in\im(H_\mathbf{k'}\hookrightarrow H_\mathbf{k})$ with $\theta_\mathbf{k}=\psi_{\mathbf{k'},\mathbf{k}}(\theta_\mathbf{k'})$.
Represent $\theta_\mathbf{k}$ by some link $L_\mathbf{k}$.
Since the splitting implies $\theta_\mathbf{k}\mapsto\theta_\mathbf{k'}$ under $H_\mathbf{k}\twoheadrightarrow H_\mathbf{k'}$, the element $\theta_\mathbf{k'}$ is represented by the sublink $L_\mathbf{k'}$ of $L_\mathbf{k}$ whose components correspond to the elements of $\mathbf{k'}\subset\mathbf{k}$.
Considering again that $\theta_\mathbf{k'}\mapsto\theta_\mathbf{k}$ under the inclusion $H_\mathbf{k'}\hookrightarrow H_\mathbf{k}$, we see that $\theta_\mathbf{k}$ is also represented by the link obtained from $L_\mathbf{k}$ by replacing components corresponding to elements of $\mathbf{k}-\mathbf{k'}$ with a split unlink.

To complete the proof, suppose $\theta_{\mathbf{g}\cup\mathbf{h}}=\psi_{\mathbf{g},\mathbf{g}\cup\mathbf{h}}(\theta_\mathbf{g})=\psi_{\mathbf{h},\mathbf{g}\cup\mathbf{h}}(\theta_\mathbf{h})$.
By the above argument, we may represent $\theta_{\mathbf{g}\cup\mathbf{h}}$ by a link $L_{\mathbf{g}\cup\mathbf{h}}$ such that the components which correspond to elements of $(\mathbf{g}-(\mathbf{g}\cap\mathbf{h}))\cup(\mathbf{h}-(\mathbf{g}\cap\mathbf{h}))=(\mathbf{g}\cup\mathbf{h})-(\mathbf{g}\cap\mathbf{h})$ form a split unlink.
Thus, $\theta_{\mathbf{g}\cup\mathbf{h}}\in\im({H_{\mathbf{g}\cap\mathbf{h}}\hookrightarrow H_{\mathbf{g}\cup\mathbf{h}}})$, represented by the sublink of $L_{\mathbf{g}\cup\mathbf{h}}$ whose components correspond to elements of $\mathbf{g}\cap\mathbf{h}$.
\end{proof}


\subsection{A colimit structure on $C_n$} \label{subsec:zpi:colimC}

In this section, we apply many of the ideas from Section~\ref{subsec:zpi:colimH} to study the groups $C_n$.
Recall from Section~\ref{subsec:main:family} that we define \[C_n=\coker\left(H_3(\Gamma/\Gamma_{n+1})\to H_3(\Gamma/\Gamma_n)\right)\] and \[C^n_\mathbf{g}=\coker\left(H_3(F(\mu_\mathbf{g})/F(\mu_\mathbf{g})_{n+1})\to H_3(F(\mu_\mathbf{g})/F(\mu_\mathbf{g})_n)\right)=\coker(H^{n+1}_\mathbf{g}\to H^n_\mathbf{g}).\]
As with the $H^n_\mathbf{g}$, we will simplify our notation and write $C_\mathbf{g}$ for $C^n_\mathbf{g}$ when the superscript $n$ is understood from context.
As in Proposition~\ref{prop:colimH}, we prove that $C_n$ is the colimit of the $C_\mathbf{g}$ over a directed system of split inclusions.

\begin{lemma} \label{lem:colimC}
The directed system $\left(\{H^n_\mathbf{g}\},\{\psi_{\mathbf{g},\mathbf{h}}\}\right)$ induces a directed system $\left(\{C_\mathbf{g}\},\{\ol{\psi}_{\mathbf{g},\mathbf{h}}\}\right)$.
\end{lemma}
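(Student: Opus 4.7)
The plan is to show that each inclusion of coset-representative sets $\mathbf{g}\subset\mathbf{h}$ induces a commutative square relating $H^{n+1}_\bullet$ and $H^n_\bullet$, so that the horizontal maps descend to a well-defined homomorphism of the cokernels. The functoriality in $\mathbf{g}\subset\mathbf{h}$ will then follow from the functoriality of the original system.

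First, I would observe that the inclusion of based meridians $\mu_\mathbf{g}\subset\mu_\mathbf{h}$ induces a group inclusion $F(\mu_\mathbf{g})\hookrightarrow F(\mu_\mathbf{h})$ that carries $F(\mu_\mathbf{g})_k$ into $F(\mu_\mathbf{h})_k$ for every $k$, since the lower central series is functorial in the group. Passing to quotients, this yields homomorphisms $F(\mu_\mathbf{g})/F(\mu_\mathbf{g})_k\to F(\mu_\mathbf{h})/F(\mu_\mathbf{h})_k$, and applying $H_3$ recovers precisely the maps $\psi^k_{\mathbf{g},\mathbf{h}}$ for $k=n$ and $k=n+1$. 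Moreover, the canonical projections $F(\mu_\bullet)/F(\mu_\bullet)_{n+1}\twoheadrightarrow F(\mu_\bullet)/F(\mu_\bullet)_n$ are natural in the free group argument, so the square
\begin{center}
\begin{tikzcd}
H^{n+1}_\mathbf{g} \arrow[r, "\psi^{n+1}_{\mathbf{g}{,}\mathbf{h}}"] \arrow[d] & H^{n+1}_\mathbf{h} \arrow[d] \\
H^n_\mathbf{g} \arrow[r, "\psi^n_{\mathbf{g}{,}\mathbf{h}}"'] & H^n_\mathbf{h}
\end{tikzcd}
\end{center}
commutes, where the vertical arrows are the homomorphisms defining the cokernels $C_\mathbf{g}$ and $C_\mathbf{h}$.

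Given this commutative square, the composition $H^n_\mathbf{g}\xrightarrow{\psi^n_{\mathbf{g},\mathbf{h}}}H^n_\mathbf{h}\twoheadrightarrow C_\mathbf{h}$ sends the image of the left vertical arrow into the image of the right vertical arrow, hence factors through $C_\mathbf{g}$, producing a well-defined homomorphism $\ol{\psi}_{\mathbf{g},\mathbf{h}}:C_\mathbf{g}\to C_\mathbf{h}$.

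Finally, for functoriality, given $\mathbf{g}\subset\mathbf{h}\subset\mathbf{k}$, the equality $\psi^n_{\mathbf{h},\mathbf{k}}\circ\psi^n_{\mathbf{g},\mathbf{h}}=\psi^n_{\mathbf{g},\mathbf{k}}$ (which holds at the level of free groups by associativity of composition) descends to the cokernels to give $\ol{\psi}_{\mathbf{h},\mathbf{k}}\circ\ol{\psi}_{\mathbf{g},\mathbf{h}}=\ol{\psi}_{\mathbf{g},\mathbf{k}}$; similarly $\ol{\psi}_{\mathbf{g},\mathbf{g}}=\mathrm{id}_{C_\mathbf{g}}$. No real obstacle is anticipated here — this is a routine diagram-chase verifying that the cokernel construction is functorial on the system already established in Proposition~\ref{prop:colimH}. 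The substantive content will arrive in the subsequent result that identifies $C_n$ with $\colim_\mathbf{g}C_\mathbf{g}$ over this system.
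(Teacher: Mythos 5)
Your proof is correct and takes essentially the same approach as the paper: exhibit the commutative square relating $H^{n+1}_\bullet$ and $H^n_\bullet$ for $\mathbf{g}\subset\mathbf{h}$, then pass to cokernels. You spell out the naturality of the lower central quotients and the functoriality check that the paper leaves implicit, but the substance is identical.
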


\begin{proof}
Suppose $\mathbf{g}\subset\mathbf{h}$.
By Proposition~\ref{prop:colimH}, we have inclusions $\psi^n_{\mathbf{g},\mathbf{h}}:H^n_\mathbf{g}\hookrightarrow H^n_\mathbf{h}$ and $\psi^{n+1}_{\mathbf{g},\mathbf{h}}:H^{n+1}_\mathbf{g}\hookrightarrow H^{n+1}_\mathbf{h}$.
The following square commutes.
\begin{center}
\begin{tikzcd}
H^{n+1}_\mathbf{g} \arrow[r,hookrightarrow] \arrow[d] & H^{n+1}_\mathbf{h} \arrow[d] \\
H^n_\mathbf{g} \arrow[r,hookrightarrow] & H^n_\mathbf{h}
\end{tikzcd}
\end{center}
Thus, the composite $H^n_\mathbf{g}\hookrightarrow H^n_\mathbf{h}\twoheadrightarrow C_\mathbf{h}$ factors through $C_\mathbf{g}$.
We denote the induced homomorphism $C_\mathbf{g}\to C_\mathbf{h}$ by $\ol{\psi}_{\mathbf{g},\mathbf{h}}$.
\end{proof}

\begin{proposition} \label{prop:colimC}
The directed system $\left(\{C_\mathbf{g}\},\{\ol{\psi}_{\mathbf{g},\mathbf{h}}\}\right)$ consists entirely of split inclusions and has colimit \[C_n=\colim_\mathbf{g} C_\mathbf{g}.\]
\end{proposition}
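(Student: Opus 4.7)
The plan is to mimic the proof of Proposition~\ref{prop:colimH}, lifting the splittings on free groups to the cokernels $C_\mathbf{g}$ and then identifying the colimit of the $C_\mathbf{g}$ with $C_n$ by commuting colimits with cokernels.

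For the split inclusion property, suppose $\mathbf{g}\subset\mathbf{h}$. The pair of group homomorphisms $F(\mu_\mathbf{g})\hookrightarrow F(\mu_\mathbf{h})\twoheadrightarrow F(\mu_\mathbf{g})$ (the inclusion and the retraction that kills the generators $\mu_{\mathbf{h}-\mathbf{g}}$) descends to a split inclusion at \emph{every} lower central quotient, so it yields compatible splittings at levels $n$ and $n+1$: retractions $r^n_{\mathbf{h},\mathbf{g}}\colon H^n_\mathbf{h}\twoheadrightarrow H^n_\mathbf{g}$ and $r^{n+1}_{\mathbf{h},\mathbf{g}}\colon H^{n+1}_\mathbf{h}\twoheadrightarrow H^{n+1}_\mathbf{g}$ that commute with the vertical maps $H^{n+1}_{(-)}\to H^n_{(-)}$ by naturality of $H_3$ applied to the commutative square of underlying group homomorphisms $F(\mu_{(-)})/F(\mu_{(-)})_{n+1}\twoheadrightarrow F(\mu_{(-)})/F(\mu_{(-)})_n$. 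These retractions therefore descend, by the same argument used to produce $\ol{\psi}_{\mathbf{g},\mathbf{h}}$ in Lemma~\ref{lem:colimC}, to a retraction $\ol{r}_{\mathbf{h},\mathbf{g}}\colon C_\mathbf{h}\twoheadrightarrow C_\mathbf{g}$ of $\ol{\psi}_{\mathbf{g},\mathbf{h}}$. This exhibits each $\ol{\psi}_{\mathbf{g},\mathbf{h}}$ as a split monomorphism, hence a split inclusion.

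For the colimit identification, I would use that the index poset of finite sets $\mathbf{g}$ of coset representatives, ordered by inclusion, is filtered (any two finite $\mathbf{g}_1,\mathbf{g}_2$ have common upper bound $\mathbf{g}_1\cup\mathbf{g}_2$). Filtered colimits of abelian groups are exact, hence commute with cokernels. Applying this to the morphism of directed systems $\{H^{n+1}_\mathbf{g}\to H^n_\mathbf{g}\}$ gives
\[
\colim_\mathbf{g} C_\mathbf{g} \;=\; \colim_\mathbf{g}\coker\!\bigl(H^{n+1}_\mathbf{g}\to H^n_\mathbf{g}\bigr) \;\cong\; \coker\!\bigl(\colim_\mathbf{g} H^{n+1}_\mathbf{g} \to \colim_\mathbf{g} H^n_\mathbf{g}\bigr).
\]
By Proposition~\ref{prop:colimH}, the two colimits on the right identify canonically with $H_3(\Gamma/\Gamma_{n+1})$ and $H_3(\Gamma/\Gamma_n)$, respectively, and under these identifications the induced map agrees with the homomorphism $H_3(\Gamma/\Gamma_{n+1})\to H_3(\Gamma/\Gamma_n)$ (this can be checked on each $H^{n+1}_\mathbf{g}$, since both arise by functoriality of $H_3$ from the projection on $\pi_1(M)$-lower central quotients). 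The right-hand cokernel is therefore $C_n$ by definition.

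There is no serious obstacle: the only point requiring care is the compatibility of the splittings at levels $n$ and $n+1$ with the vertical projections, which is purely formal since every map in sight is induced by the same pair of group homomorphisms $F(\mu_\mathbf{g})\rightleftarrows F(\mu_\mathbf{h})$. Everything else rests on Proposition~\ref{prop:colimH} together with the exactness of filtered colimits of abelian groups.
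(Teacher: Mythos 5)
Your proposal is correct and takes essentially the same approach as the paper. The split-inclusion argument (lifting the free-group retraction to compatible splittings at levels $n$ and $n+1$, then passing to cokernels) is exactly what the paper does. For the colimit identification, the paper verifies the pushout universal property by hand with an enlarged directed system including $\{H^{n+1}_\mathbf{g}\}$, $\{H^n_\mathbf{g}\}$, $\{0_\mathbf{g}\}$, and $\{C_\mathbf{g}\}$, while you invoke the abstract fact that colimits commute with cokernels --- this is the same observation, just stated categorically rather than verified diagrammatically. One small remark: you don't actually need the exactness of filtered colimits (which is about preserving finite limits such as kernels); the cokernel is itself a colimit (a pushout along the zero map), so $\colim$ commutes with $\coker$ for purely formal reasons, filtered or not. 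Your citation of the stronger fact is harmless but heavier than necessary.
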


\begin{proof}
Suppose $\mathbf{g}\subset\mathbf{h}$, and consider the following diagram, where both rows are splittings.
\begin{center}
\begin{tikzcd}
H^{n+1}_\mathbf{g} \arrow[r,hookrightarrow] \arrow[d] & H^{n+1}_\mathbf{h} \arrow[r,twoheadrightarrow] \arrow[d] & H^{n+1}_\mathbf{g} \arrow[d] \\
H^n_\mathbf{g} \arrow[r,hookrightarrow] & H^n_\mathbf{h} \arrow[r,twoheadrightarrow] & H^n_\mathbf{g}
\end{tikzcd}
\end{center}
As in the proof of Lemma~\ref{lem:colimC}, this diagram induces maps $C_\mathbf{g}\to C_\mathbf{h}\to C_\mathbf{g}$.
The composite homomorphism is $\id_{C_\mathbf{g}}$.
Thus, $\ol{\psi}_{\mathbf{g},\mathbf{h}}$ is injective and splits.

To conclude the proof, we show that $C_n=\colim_\mathbf{g}C_\mathbf{g}$.
By definition, each $C_\mathbf{g}$ is the colimit of $0\leftarrow H^{n+1}_\mathbf{g}\to H^n_\mathbf{g}$.
We may therefore enlarge the directed system $\{C_\mathbf{g}\}$ to include the directed systems $\{H^{n+1}_\mathbf{g}\}$, $\{H^n_\mathbf{g}\}$, and $\{0_\mathbf{g}\}$ (in which each object is the trivial abelian group), as in the inner part of the diagram in Figure~\ref{fig:colim},
without altering the colimit of the system.
By properties of colimits, and because $H_3(\Gamma/\Gamma_{n})=\colim_\mathbf{g}H^n_\mathbf{g}$ by Proposition~\ref{prop:colimH}, we obtain the outer square of Figure~\ref{fig:colim} which fits into the enlarged directed system.

To show $C_n=\colim_\mathbf{g}C_\mathbf{g}$, it suffices to prove that the outer square of Figure~\ref{fig:colim} is a pushout square, as $C_n$ is the pushout of $0\leftarrow H_3(\Gamma/\Gamma_{n+1}) \to H_3(\Gamma/\Gamma_n)$.
Let $A$ be an abelian group, and suppose there exist homomorphisms $H_3(\Gamma/\Gamma_n)\to A$ and $0\to A$ as in Figure~\ref{fig:colim}.
We show there exists a unique homomorphism $\colim_\mathbf{g}C_\mathbf{g}\to A$ fitting into the diagram.
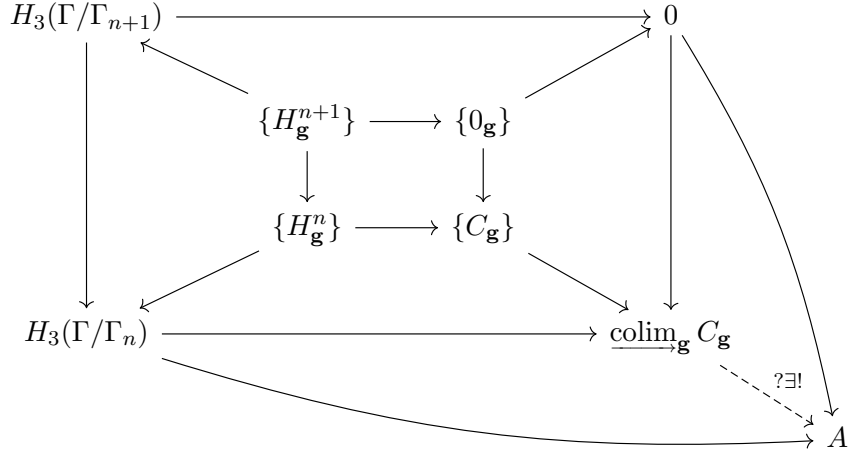
\begin{figure}
\begin{center}
\begin{tikzcd}
H_3(\Gamma/\Gamma_{n+1}) \arrow[rrr] \arrow[ddd] & & & 0 \arrow[ddd] \arrow[ddddr, bend left=10] \\
& \{H^{n+1}_\mathbf{g}\} \arrow[r] \arrow[d] \arrow[ul] & \{0_\mathbf{g}\} \arrow[d] \arrow[ur] \\
& \{H^n_\mathbf{g}\} \arrow[r] \arrow[dl] & \{C_\mathbf{g}\} \arrow[dr] \\
H_3(\Gamma/\Gamma_n) \arrow[rrr] \arrow[drrrr, bend right=10] & & & \colim_\mathbf{g}C_\mathbf{g} \arrow[dr, dashed, "?\exists!"] \\
& & & & A
\end{tikzcd}
\caption{The enlarged directed system from the proof of Proposition~\ref{prop:colimC}.}
\label{fig:colim}
\end{center}
\end{figure}
As the inner squares are a system of pushout squares, we obtain a collection of unique homomorphisms $C_\mathbf{g}\to A$ compatible with the directed systems and the homomorphisms $H_3(\Gamma/\Gamma_n)\to A$ and $0\to A$.
Thus, we obtain the desired homomorphism $\colim_\mathbf{g}C_\mathbf{g}\to A$, proving $C_n=\colim_\mathbf{g}C_\mathbf{g}$.
\end{proof}

Similarly to Definition~\ref{def:initialH}, we define initial elements in the setting of the $C_n$.

\begin{definition} \label{def:initialC}
For $\theta_\mathbf{g}\mapsto\theta$ under $C_\mathbf{g}\hookrightarrow C_n$, we say $\theta_\mathbf{g}$ is \emph{initial} for $\theta$ if there does not exist $\theta_\mathbf{h}\in C_\mathbf{h}$ with $\mathbf{h}\subsetneq \mathbf{g}$ and $\theta_\mathbf{h}\mapsto\theta$.
\end{definition}

\noindent The following proposition is analogous to Proposition~\ref{prop:initialH}.

\begin{proposition} \label{prop:initialC}
For each element $\theta\in C_n$, there exists a unique initial element.
\end{proposition}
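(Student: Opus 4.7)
The plan is to mirror Proposition~\ref{prop:initialH}, leveraging the colimit presentation $C_n=\colim_\mathbf{g}C_\mathbf{g}$ along split inclusions established by Proposition~\ref{prop:colimC}. For \emph{existence}, any $\theta\in C_n$ lifts to some $\theta_\mathbf{g}\in C_\mathbf{g}$ with $\mathbf{g}$ finite; if $\theta_\mathbf{g}$ is not already initial, downward induction on $|\mathbf{g}|$ produces one that is.

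For \emph{uniqueness}, the problem reduces exactly as in Proposition~\ref{prop:initialH} to proving the set-theoretic identity
\[\im(C_\mathbf{g}\hookrightarrow C_{\mathbf{g}\cup\mathbf{h}})\cap\im(C_\mathbf{h}\hookrightarrow C_{\mathbf{g}\cup\mathbf{h}})=\im(C_{\mathbf{g}\cap\mathbf{h}}\hookrightarrow C_{\mathbf{g}\cup\mathbf{h}}),\]
the $\supseteq$ direction being immediate from the commutativity of the directed system. Given this identity, two initial representatives $\theta_\mathbf{g},\theta_\mathbf{h}$ for $\theta$ must share a common image in $C_{\mathbf{g}\cup\mathbf{h}}$ (by injectivity of the split inclusions into $C_n$), which then pulls back to some $\theta_{\mathbf{g}\cap\mathbf{h}}\in C_{\mathbf{g}\cap\mathbf{h}}$ mapping to both of them; initiality forces $\mathbf{g}=\mathbf{g}\cap\mathbf{h}=\mathbf{h}$ and hence $\theta_\mathbf{g}=\theta_\mathbf{h}$.

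To prove the nontrivial $\subseteq$ containment, I would argue directly using the retractions. For $\mathbf{k}\subseteq\mathbf{k}'$, the split inclusion $\iota^\mathbf{k}_{\mathbf{k}'}$ and retraction $\rho^{\mathbf{k}'}_\mathbf{k}$ on $C$ descend from the obvious inclusion and the ``kill-generators-outside-$\mathbf{k}$'' retraction on $F(\mu_{\mathbf{k}'})$. At the free-group level, the endomorphisms $\iota^\mathbf{g}\rho^\mathbf{g}$ and $\iota^\mathbf{h}\rho^\mathbf{h}$ of $F(\mu_{\mathbf{g}\cup\mathbf{h}})$ commute and both composites equal $\iota^{\mathbf{g}\cap\mathbf{h}}\rho^{\mathbf{g}\cap\mathbf{h}}$, since all three endomorphisms fix generators in $\mathbf{g}\cap\mathbf{h}$ and kill all others. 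This commutativity is preserved under passage to $F/F_n$, to $H_3$, and then to the cokernels $C$. Thus for $\theta\in\im(\iota^\mathbf{g})\cap\im(\iota^\mathbf{h})\subseteq C_{\mathbf{g}\cup\mathbf{h}}$, the splitting identity $\rho\iota=\id$ gives $\iota^\mathbf{g}\rho^\mathbf{g}(\theta)=\theta=\iota^\mathbf{h}\rho^\mathbf{h}(\theta)$, whence $\iota^{\mathbf{g}\cap\mathbf{h}}\rho^{\mathbf{g}\cap\mathbf{h}}(\theta)=\theta$ and $\theta\in\im(\iota^{\mathbf{g}\cap\mathbf{h}})$.

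The main thing to verify carefully will be that the free-group commutativity of the idempotents $\iota\rho$ genuinely survives the two-step descent to the cokernels $C_\mathbf{k}$; this is a routine functoriality chase, using that the splittings commute with the differentials $H^{n+1}_\mathbf{k}\to H^n_\mathbf{k}$ whose cokernels define the $C_\mathbf{k}$. As a cross-check, one can instead port over the link-theoretic argument from Proposition~\ref{prop:initialH} wholesale: by the Orr realization and classification results invoked in Theorem~\ref{thm:family}, elements of $C_\mathbf{g}$ are represented by ordered links in $S^3$ with vanishing Milnor invariants of length $\leq n$, inclusions adjoin split unlink components, and retractions delete components, so the same iterated ``replace-by-split-unlink'' maneuver concludes.
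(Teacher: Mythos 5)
Your proposal is correct and takes a genuinely different route for the key step. The paper proves the nontrivial containment
\[
\im(C_\mathbf{g}\hookrightarrow C_{\mathbf{g}\cup\mathbf{h}})\cap\im(C_\mathbf{h}\hookrightarrow C_{\mathbf{g}\cup\mathbf{h}})\subseteq\im(C_{\mathbf{g}\cap\mathbf{h}}\hookrightarrow C_{\mathbf{g}\cup\mathbf{h}})
\]
geometrically, by invoking Orr's realization and classification results: any class in $C_\mathbf{k}$ is represented by a $|\mathbf{k}|$-component link in $S^3$ with vanishing Milnor invariants of lengths $\leq n$, the split inclusion adjoins split unlink components, and the retraction deletes components; a class fixed by both idempotents is therefore represented by a link split off from everything outside $\mathbf{g}\cap\mathbf{h}$. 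You instead run a purely algebraic argument: the endomorphisms $\iota^\mathbf{g}\rho^\mathbf{g}$ and $\iota^\mathbf{h}\rho^\mathbf{h}$ of $F(\mu_{\mathbf{g}\cup\mathbf{h}})$ are the idempotents that fix generators indexed by $\mathbf{g}$ (resp.\ $\mathbf{h}$) and kill the rest, so they visibly commute with common composite $\iota^{\mathbf{g}\cap\mathbf{h}}\rho^{\mathbf{g}\cap\mathbf{h}}$; this identity descends through $F\mapsto F/F_n\mapsto H_3(F/F_n)\mapsto C_\mathbf{k}$ by functoriality and by the compatibility of the splittings with the maps $H^{n+1}_\mathbf{k}\to H^n_\mathbf{k}$ already established in Proposition~\ref{prop:colimC}. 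Your version is more elementary (no appeal to Orr's link-realization theorem) and applies verbatim to Propositions~\ref{prop:initialH} and~\ref{prop:initialB} as well, giving a uniform treatment; the paper's version buys a geometric picture consistent with how the rest of the argument (e.g.\ the proof of Theorem~\ref{thm:family}) manipulates links. Both are sound, and you correctly flagged that the paper's link-theoretic argument is available as a fallback.
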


\begin{proof}
The proof is similar to the proof of Proposition~\ref{prop:initialH}.
Fix $\theta\in C_n$.
By Proposition~\ref{prop:colimC}, $C_n=\colim_\mathbf{g}C_\mathbf{g}$, so $\theta$ has at least one initial element.

As in the proof of Proposition~\ref{prop:initialH}, to show uniqueness is suffices to show that \[\im(C_\mathbf{g}\hookrightarrow C_{\mathbf{g}\cup\mathbf{h}})\cap\im(C_\mathbf{h}\hookrightarrow C_{\mathbf{g}\cup\mathbf{h}})=\im(C_{\mathbf{g}\cap\mathbf{h}}\hookrightarrow C_{\mathbf{g}\cup\mathbf{h}})\] for all $\mathbf{g},\mathbf{h}$.
The reverse inclusion $\supseteq$ is again immediate, and we prove the forward inclusion $\subseteq$ by leveraging an interpretation of the $C_\mathbf{g}$ due to Orr \cite{Orr89}.

Recall from the proof of Proposition~\ref{prop:initialH} that an element $\theta_\mathbf{k}\in\im(H_\mathbf{k'}\hookrightarrow H_\mathbf{k})$, $\mathbf{k'}\subset\mathbf{k}$, can be represented by a $|\mathbf{k}|$-component link $L_\mathbf{k}$ labeled by elements of $\mathbf{k}$, where the components corresponding to elements of $\mathbf{k}-\mathbf{k'}$ comprise a split unlink.
More precisely, we have $\theta_\mathbf{k}=\theta^O_n(L_\mathbf{k},\phi_0)$, where $\phi_0$ is an $n$-basing of $L_\mathbf{k}$ relative to the unlink.

In this setting, we may use the same argument, relying on the splitting $C_\mathbf{k'}\hookrightarrow C_\mathbf{k}\twoheadrightarrow C_\mathbf{k'}$, to show that an element $\theta_\mathbf{k}\in\im(C_\mathbf{k'}\hookrightarrow C_\mathbf{k})$, $\mathbf{k'}\subset\mathbf{k}$, can be represented by the same kind of link $L_\mathbf{k}$.
More precisely, we have $\theta_\mathbf{k}=[\theta^O_n(L_\mathbf{k},\phi_0)]$, the value of the class $\theta^O_n(L_\mathbf{k},\phi_0)\in H^n_\mathbf{k}$ in $C_\mathbf{k}=\coker(H^{n+1}_\mathbf{k}\to H^n_\mathbf{k})$.
By the same argument as in the proof of Proposition~\ref{prop:initialH}, given $\theta_{\mathbf{g}\cup\mathbf{h}}=\ol{\psi}_{\mathbf{g},\mathbf{g}\cup\mathbf{h}}(\theta_\mathbf{g})=\ol{\psi}_{\mathbf{h},\mathbf{g}\cup\mathbf{h}}(\theta_\mathbf{h})$, we may represent $\theta_{\mathbf{g}\cup\mathbf{h}}$ by a link $L_{\mathbf{g}\cup\mathbf{h}}$ labeled with elements of $\mathbf{g}\cup\mathbf{h}$ such that the components corresponding to $(\mathbf{g}\cup\mathbf{h})-(\mathbf{g}\cap\mathbf{h})$ form a split unlink.
Thus, $\theta_{\mathbf{g}\cup\mathbf{h}}\in\im(C_{\mathbf{g}\cap\mathbf{h}}\hookrightarrow C_{\mathbf{g}\cup\mathbf{h}})$, represented by the sublink of $L_{\mathbf{g}\cup\mathbf{h}}$ whose components are labeled by elements of $\mathbf{g}\cap\mathbf{h}$.
\end{proof}

\begin{corollary} \label{cor:colimC}
The abelian group $C_n$ is isomorphic to $\Z^\infty$, that is, $C_n$ is a countably infinitely generated free abelian group.
\end{corollary}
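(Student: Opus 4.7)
The plan is to combine the colimit structure of Proposition~\ref{prop:colimC} with the identification of each $C_\mathbf{g}$ in terms of Milnor invariants of links in $S^3$ to exhibit $C_n$ as a countably infinite direct sum of finitely generated free abelian groups.

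By Proposition~\ref{prop:finfty}, the set of left cosets of $\langle[K]\rangle\leq\pi_1(M)$ is countably infinite. Fix an enumeration $\{g_i\}_{i\geq 1}$ of the (already chosen) coset representatives, with $g_1=1$, and set $\mathbf{g}_i=\{g_1,\dots,g_i\}$. The chain $\mathbf{g}_1\subset\mathbf{g}_2\subset\cdots$ is cofinal in the directed system of finite subsets of representatives, so by Proposition~\ref{prop:colimC} we have
\[
C_n \;=\; \colim_{i\to\infty}\, C_{\mathbf{g}_i},
\]
where each transition map $\ol{\psi}_{\mathbf{g}_i,\mathbf{g}_{i+1}}\colon C_{\mathbf{g}_i}\hookrightarrow C_{\mathbf{g}_{i+1}}$ is a split inclusion. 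The canonical splitting is induced by the retraction $F(\mu_{\mathbf{g}_{i+1}})\twoheadrightarrow F(\mu_{\mathbf{g}_i})$ that kills the generator $\mu_{g_{i+1}}$, and these retractions are visibly compatible along the chain.

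Next, each $C_{\mathbf{g}_i}$ is a finitely generated free abelian group: as used in the proof of Theorem~\ref{thm:family}, Theorems~4 and~13 of \cite{Orr89} identify $C^n_{\mathbf{g}_i}$ with the group of length-$(n+1)$ Milnor invariants of ordered $|\mathbf{g}_i|$-component links in $S^3$, which is free abelian of finite rank. Setting $D_1=C_{\mathbf{g}_1}$ and $D_i=\ker(C_{\mathbf{g}_i}\twoheadrightarrow C_{\mathbf{g}_{i-1}})$ for $i\geq 2$, each $D_i$ is a direct summand of a finitely generated free abelian group, hence itself finitely generated and free abelian. The compatibility of the retractions yields compatible isomorphisms $C_{\mathbf{g}_i}\cong D_1\oplus\cdots\oplus D_i$ commuting with the inclusions in the chain, so passing to the colimit gives
\[
C_n \;\cong\; \bigoplus_{i\geq 1} D_i,
\]
which is free abelian.

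To conclude that $C_n$ has countably infinite rank (rather than being merely free abelian of some finite rank), I would invoke the growth estimate for the number of linearly independent length-$(n+1)$ Milnor invariants of $m$-component links recalled in Section~\ref{subsec:main:large}: this rank tends to infinity as $m\to\infty$. Since $|\mathbf{g}_i|=i$ grows without bound, the ranks of the $C_{\mathbf{g}_i}$ are unbounded, forcing $\bigoplus_{i\geq 1} D_i$ to have countably infinite rank, so $C_n\cong\Z^\infty$. The only step requiring mild care is the compatibility of the splittings along the chain, which is immediate for the canonical retractions but worth verifying explicitly so that the direct sum decomposition passes cleanly to the colimit.
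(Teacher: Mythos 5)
Your proof is correct, and it takes a genuinely different route from the paper on the key step of establishing infinitude. The paper's own proof appeals to Proposition~\ref{prop:colimC} and \cite{Orr89} Lemma~14, and then argues that there are infinitely many nontrivial pairwise-disjoint summands by looking at the images of $C_\mathbf{g}$ with $|\mathbf{g}|=2$ (disjointness following from Proposition~\ref{prop:initialC}). You instead extract a cofinal chain $\mathbf{g}_1\subset\mathbf{g}_2\subset\cdots$, peel off the summands $D_i=\ker(C_{\mathbf{g}_i}\twoheadrightarrow C_{\mathbf{g}_{i-1}})$ using the compatible canonical retractions, and then derive countably infinite rank from the unbounded growth of $\operatorname{rank}(C_{\mathbf{g}_i})$, which in turn follows from the asymptotics of the Milnor invariant count quoted in Section~\ref{subsec:main:large}. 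Your route is arguably more robust: by the rank formula $kN_n(k)-N_{n+1}(k)$ from Orr's Lemma~14 as recalled in Section~\ref{subsec:zpi:bases}, the groups $C^n_\mathbf{g}$ with $|\mathbf{g}|=2$ can vanish (e.g.\ $2N_2(2)-N_3(2)=2-2=0$ when $n=2$), so the paper's quick appeal to $|\mathbf{g}|=2$ does not literally apply for all $n$ without replacing $2$ by something like $n+1$; your cofinal-chain argument plus the growth estimate sidesteps this entirely. Two small points worth tightening: the ``free abelian of finite rank'' fact for each $C_\mathbf{g}$ is Orr's Lemma~14, not Theorems~4 and~13 (those give the identification with Milnor invariants, not the rank computation), and the justification that there are countably infinitely many cosets of $\langle[K]\rangle$ should invoke that $\pi_1(M)$ is a finitely generated, torsion-free, infinite group which is not virtually cyclic, rather than Proposition~\ref{prop:finfty} alone, though this is implicit in the paper's setup.
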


\begin{proof}
By \cite{Orr89} Lemma 14, the group $C_\mathbf{g}$ is free abelian of finite rank for each $\mathbf{g}$.
There are countably many such $\mathbf{g}$, as the group $\pi_1(M)$ is finitely generated, hence countable, so the set of cosets of $\langle[K]\rangle$ is countable.
By Proposition~\ref{prop:colimC}, $C_n$ is therefore the colimit over countably many finitely generated free abelian groups, and this colimit is over a system of split inclusions.
There are infinitely many nontrivial summands in this colimit as there are infinitely many $\mathbf{g}$ with $|\mathbf{g}|=2$, and no two $C_\mathbf{g}$ with $|\mathbf{g}|=2$ have nontrivial intersection in $C_n$.
Thus, $C_n\cong\Z^\infty$.
\end{proof}


\subsection{Bases for $C_n$} \label{subsec:zpi:bases}

In this section, we build a specific kind of basis for $C_n\cong\Z^\infty$ given a total ordering on the cosets of $\langle[K]\rangle$.
Such an ordering restricts to an ordering on each set $\mathbf{g}$ of coset representatives.
The type of basis we construct is compatible with \emph{order-preserving relabeling isomorphisms}, which we now define.

\begin{definition} \label{def:relabel}
Fix a total ordering on the cosets of $\langle[K]\rangle$, thus fixing an ordering on each set of coset representatives. Let $\mathbf{g}=\{g_1,\dots,g_k\}$ and $\mathbf{h}=\{h_1,\dots,h_k\}$ be sets of coset representatives of the same size with $g_1<\cdots<g_k$ and $h_1<\cdots<h_k$. The \emph{order-preserving relabeling isomorphism} from $C_\mathbf{g}$ to $C_\mathbf{h}$ is the isomorphism $\rho_{\mathbf{g},\mathbf{h}}:C_\mathbf{g}\xrightarrow{\cong} C_\mathbf{h}$ induced by the isomorphism of free groups \[\langle\mu_{g_1},\dots,\mu_{g_k}\rangle\xrightarrow{\cong}\langle\mu_{h_1},\dots,\mu_{h_k}\rangle\] sending $\mu_{g_i}\mapsto\mu_{h_i}$ for all $1\leq i\leq k$.
\end{definition}

\begin{definition} \label{def:basis}
Fix a total ordering on the cosets of $\langle[K]\rangle$, thus fixing an ordering on each set of coset representatives. A basis $\beta$ for the free abelian group $C_n$ is \emph{colimit-compatible} if the following hold.
\begin{itemize}
\item For each set of coset representatives $\mathbf{g}$, the basis $\beta$ is an extension of the image of a basis $\beta_\mathbf{g}$ for $C_\mathbf{g}$ under the split inclusion $C_\mathbf{g}\hookrightarrow C_n$.
\item Every order-preserving relabeling isomorphism $\rho_{\mathbf{g},\mathbf{h}}:C_\mathbf{g}\xrightarrow{\cong} C_\mathbf{h}$ sends the basis $\beta_\mathbf{g}$ to the basis $\beta_\mathbf{h}$.
\end{itemize}
\end{definition}

\begin{proposition} \label{prop:basis}
Given any total ordering on the cosets of $\langle[K]\rangle$, there exists a colimit-compatible basis for $C_n$.
\end{proposition}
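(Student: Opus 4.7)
The plan is to construct $\beta$ as a disjoint union of ``new'' pieces $\gamma_\mathbf{g}$, one for each finite set $\mathbf{g}$ of coset representatives, with each $\gamma_\mathbf{g}$ forming a basis of a distinguished direct summand $N_\mathbf{g}\subseteq C_\mathbf{g}$. The $\gamma_\mathbf{g}$ would be chosen coherently with both the split inclusions of Proposition~\ref{prop:colimC} and the order-preserving relabeling isomorphisms, so that taking hierarchical unions produces bases satisfying both axioms of Definition~\ref{def:basis}.

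First, I would observe that for each inclusion $\mathbf{h}\subseteq\mathbf{g}$ the free-group quotient $F(\mu_\mathbf{g})\twoheadrightarrow F(\mu_\mathbf{h})$ killing generators outside $\mathbf{h}$ induces a canonical projection $\pi_\mathbf{h}^\mathbf{g}:C_\mathbf{g}\twoheadrightarrow C_\mathbf{h}$ that splits $\ol{\psi}_{\mathbf{h},\mathbf{g}}$, and these projections together with the inclusions satisfy the commutation $\pi_\mathbf{k}^\mathbf{g}\circ\ol{\psi}_{\mathbf{h},\mathbf{g}}=\ol{\psi}_{\mathbf{k}\cap\mathbf{h},\mathbf{k}}\circ\pi_{\mathbf{k}\cap\mathbf{h}}^\mathbf{h}$. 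I would then define
\[N_\mathbf{g}:=\bigcap_{\mathbf{h}\subsetneq\mathbf{g}}\ker\bigl(\pi_\mathbf{h}^\mathbf{g}\bigr)\subseteq C_\mathbf{g},\]
and prove by induction on $|\mathbf{g}|$ that $C_\mathbf{g}=\bigoplus_{\mathbf{h}\subseteq\mathbf{g}} N_\mathbf{h}$. Injectivity of the summation map is immediate from the commutation formula: applying $\pi_\mathbf{k}^\mathbf{g}$ to a relation $\sum_\mathbf{h} n_\mathbf{h}=0$ (with $n_\mathbf{h}\in N_\mathbf{h}$) kills every term except those indexed by $\mathbf{h}\subseteq\mathbf{k}$, and induction on $|\mathbf{k}|$ then forces $n_\mathbf{h}=0$ for all $\mathbf{h}$. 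Surjectivity reduces, by the inductive hypothesis, to showing that $N_\mathbf{g}$ spans a complement of $\sum_{\mathbf{h}\subsetneq\mathbf{g}}C_\mathbf{h}$ in $C_\mathbf{g}$; this would follow by invoking Proposition~\ref{prop:initialC} together with the finite-rank freeness of $C_\mathbf{g}$ (Lemma~14 of \cite{Orr89}) and a rank count via M\"obius inversion on the lattice of subsets. I expect this direct sum decomposition to be the main technical obstacle, since it requires reconciling the combinatorial lattice structure of the coset subsets with the actual ranks and kernels inside $C_\mathbf{g}$.

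For the second ingredient, for each $k\geq 1$ I would let $\mathbf{g}_k^{\mathrm{ref}}=\{g_1,\dots,g_k\}$ consist of the first $k$ coset representatives in the given ordering, and pick an arbitrary basis $\gamma_k^{\mathrm{ref}}$ of $N_{\mathbf{g}_k^{\mathrm{ref}}}$. For every other $\mathbf{h}$ of size $k$, set $\gamma_\mathbf{h}:=\rho_{\mathbf{g}_k^{\mathrm{ref}},\mathbf{h}}(\gamma_k^{\mathrm{ref}})$. Because $\rho_{\mathbf{g}_k^{\mathrm{ref}},\mathbf{h}}$ is induced by a bijection of free-group generators, it intertwines the canonical projections up to relabeling of indices, so it carries $N_{\mathbf{g}_k^{\mathrm{ref}}}$ isomorphically onto $N_\mathbf{h}$, and $\gamma_\mathbf{h}$ is a basis of $N_\mathbf{h}$.

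Finally, I would set $\beta_\mathbf{g}:=\bigsqcup_{\mathbf{h}\subseteq\mathbf{g}}\gamma_\mathbf{h}$ and $\beta:=\bigsqcup_\mathbf{g}\gamma_\mathbf{g}$. The decomposition of the previous step, combined with Proposition~\ref{prop:colimC}, ensures that $\beta_\mathbf{g}$ is a basis for $C_\mathbf{g}$ and $\beta$ is a basis for $C_n$ that extends every $\beta_\mathbf{g}$, confirming the first axiom. For the relabeling axiom, the groupoid structure of order-preserving relabelings gives $\rho_{\mathbf{g},\mathbf{h}}=\rho_{\mathbf{g}_k^{\mathrm{ref}},\mathbf{h}}\circ\rho_{\mathbf{g},\mathbf{g}_k^{\mathrm{ref}}}$ when $|\mathbf{g}|=|\mathbf{h}|=k$, and the restriction of $\rho_{\mathbf{g},\mathbf{h}}$ to any subset $\mathbf{g}'\subseteq\mathbf{g}$ is the unique order-preserving bijection $\rho_{\mathbf{g}',\rho_{\mathbf{g},\mathbf{h}}(\mathbf{g}')}$. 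Consequently $\rho_{\mathbf{g},\mathbf{h}}(\gamma_{\mathbf{g}'})=\gamma_{\rho_{\mathbf{g},\mathbf{h}}(\mathbf{g}')}$, so $\rho_{\mathbf{g},\mathbf{h}}$ bijects $\beta_\mathbf{g}$ with $\beta_\mathbf{h}$, as required.
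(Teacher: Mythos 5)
Your proposal is correct and reaches the same conclusion as the paper, but it is organized differently in a way worth noting. Where you define the ``new piece'' $N_\mathbf{g}$ intrinsically (as the intersection of kernels of the canonical projections $\pi^\mathbf{g}_\mathbf{h}$) and prove a global direct sum decomposition $C_\mathbf{g}=\bigoplus_{\mathbf{h}\subseteq\mathbf{g}}N_\mathbf{h}$ before choosing bases, the paper instead constructs the basis iteratively: at the $j\upth$ stage it picks a reference $\mathbf{g}_{j+1}$, forms the span $C_+=\plus_{\mathbf{k}\subsetneq\mathbf{g}_{j+1}}\im(C_\mathbf{k})$ inside $C_{\mathbf{g}_{j+1}}$, shows $C_+$ is a direct summand (using Propositions~\ref{prop:initialC} and~\ref{prop:colimC} two subsets at a time), extends the existing basis across the complement, and then transports by relabeling. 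Your $N_\mathbf{g}$ is exactly (a representative of) the complement of $C_+$ in $C_\mathbf{g}$, so the two arguments prove the same underlying structural fact; your framing makes the ``new part'' canonical rather than a choice of complement, which is conceptually cleaner.

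One remark on the step you flag as the ``main technical obstacle'': the decomposition $C_\mathbf{g}=\bigoplus_{\mathbf{h}\subseteq\mathbf{g}}N_\mathbf{h}$ does not actually need Proposition~\ref{prop:initialC}, the freeness of $C_\mathbf{g}$, or a rank count. It follows purely formally from your commutation identity. Setting $E_\mathbf{h}=\ol{\psi}_{\mathbf{h},\mathbf{g}}\circ\pi^\mathbf{g}_\mathbf{h}$, one checks from that identity that $E_\mathbf{h}E_\mathbf{k}=E_{\mathbf{h}\cap\mathbf{k}}$, so the $E_\mathbf{h}$ are commuting idempotents compatible with the subset lattice of $\mathbf{g}$. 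The M\"obius-inverted operators $F_\mathbf{h}=\sum_{\mathbf{k}\subseteq\mathbf{h}}(-1)^{|\mathbf{h}|-|\mathbf{k}|}E_\mathbf{k}$ are then a complete orthogonal family of idempotents with $\sum_{\mathbf{h}\subseteq\mathbf{g}}F_\mathbf{h}=E_\mathbf{g}=\id$ and $\im(F_\mathbf{h})=\ol{\psi}_{\mathbf{h},\mathbf{g}}(N_\mathbf{h})$, which gives the direct sum decomposition at once (injectivity and surjectivity simultaneously, with no induction on $|\mathbf{k}|$ required). The rest of your argument --- that relabeling isomorphisms carry $N_{\mathbf{g}^{\mathrm{ref}}_k}$ to $N_\mathbf{h}$, and that the groupoid identity $\rho_{\mathbf{g},\mathbf{h}}=\rho_{\mathbf{g}^{\mathrm{ref}}_k,\mathbf{h}}\circ\rho_{\mathbf{g},\mathbf{g}^{\mathrm{ref}}_k}$ plus restriction to subsets propagates the bases coherently --- is correct and matches the paper's handling of the relabeling axiom.
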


\begin{proof}
We will inductively build a colimit-compatible basis for $C_n$, inducting on the size of $\mathbf{g}$.
Choose any set of coset representatives of size 2, say $\mathbf{g_2}=\{g^2_1,g^2_2\}$ with $g^2_1<g^2_2$, and choose any basis $\beta_{\mathbf{g_2}}$ for $C_\mathbf{g_2}$.
Given some other set of coset representatives $\mathbf{h}=\{h_1,h_2\}$ of size 2 with $h_1<h_2$, let the basis $\beta_\mathbf{h}$ for $C_\mathbf{h}$ be the image of $\beta_{\mathbf{g_2}}$ under the order-preserving relabeling isomorphism $\rho_{\mathbf{g_2},\mathbf{h}}:C_\mathbf{g_2}\xrightarrow{\cong}C_\mathbf{h}$.
We may do so for each set of coset representatives of size 2.
Note that for $\mathbf{h}\neq\mathbf{h'}$ with $|\mathbf{h}|=|\mathbf{h'}|=2$, we have \[\im(C_\mathbf{h}\hookrightarrow C_{\mathbf{h}\cup\mathbf{h'}})\cap\im(C_\mathbf{h'}\hookrightarrow C_{\mathbf{h}\cup\mathbf{h'}})=\im(C_{\mathbf{h}\cap\mathbf{h'}}\hookrightarrow C_{\mathbf{h}\cup\mathbf{h'}})=0,\] where the first equality follows from Proposition~\ref{prop:initialC} and the second equality holds because $C_{\mathbf{h}\cap\mathbf{h'}}=0$ as $|\mathbf{h}\cap\mathbf{h'}|\leq 1$.
That this intersection is trivial is necessary so we can guarantee that the basis we eventually define for all of $C_n$ extends the basis $\beta_\mathbf{h}$ for each $C_\mathbf{h}$, $|\mathbf{h}|=2$.
This completes the base case, and we have a basis for $\bigoplus_{|\mathbf{h}|=2}\im(C_\mathbf{h}\hookrightarrow C_n)$.
This basis extends the (empty) basis for $\bigoplus_{|\mathbf{h}|=1}\im(C_\mathbf{h}\hookrightarrow C_n)$ and, by construction, is compatible with order-preserving relabeling isomorphisms $\rho_{\mathbf{h},\mathbf{h'}}:C_\mathbf{h}\xrightarrow{\cong} C_\mathbf{h'}$ for $|\mathbf{h}|=|\mathbf{h'}|=2$.

Now suppose for some $j\geq 2$ we have chosen a basis for $\plus_{|\mathbf{h}|=j}\im(C_\mathbf{h}\hookrightarrow C_n)$ which extends the bases for $\plus_{|\mathbf{h}|=i}\im(C_\mathbf{h}\hookrightarrow C_n)$, $i<j$, and is compatible with order-preserving relabeling isomorphisms $\rho_{\mathbf{h},\mathbf{h'}}$ for $|\mathbf{h}|=|\mathbf{h'}|\leq j$.
We extend to a basis for $\plus_{|\mathbf{h}|=j+1}\im(C_\mathbf{h}\hookrightarrow C_n)$.
Note that we use the notation $\plus$ instead of $\bigoplus$ for $|\mathbf{h}|>2$ as the images of the associated $C_\mathbf{h}$ no longer have trivial intersection as in the case $|\mathbf{h}|=2$.
Again, choose any set of coset representatives of size $j+1$, say $\mathbf{g_{j+1}}=\{g^{j+1}_1,\dots,g^{j+1}_{j+1}\}$ with $g^{j+1}_1<\cdots<g^{j+1}_{j+1}$.
Let \[C_+=\plus_{\mathbf{k}\subsetneq\mathbf{g_{j+1}}}\im(C_\mathbf{k}\hookrightarrow C_\mathbf{g_{j+1}}),\] where the sum is taken over all proper subsets of $\mathbf{g_{j+1}}$.
Note that the sum does not change if we only consider proper subsets $\mathbf{k}$ of size $j$.

We may inductively show that $C_+\leq C_\mathbf{g_{j+1}}$ is a direct summand of the free abelian group $C_\mathbf{g_{j+1}}$ as follows.
Choose some $\mathbf{k}\subsetneq\mathbf{g_{j+1}}$ of size $j$.
By Proposition~\ref{prop:colimC}, $C_\mathbf{k}\hookrightarrow C_\mathbf{g_{j+1}}$ is a split inclusion, hence its image is a direct summand.
Given another proper subset $\mathbf{k'}$ of size $j$, $C_\mathbf{k'}\hookrightarrow C_\mathbf{g_{j+1}}$ is also a split inclusion, and Proposition~\ref{prop:initialC} implies that \[\im(C_\mathbf{k}\hookrightarrow C_\mathbf{g_{j+1}})\cap\im(C_\mathbf{k'}\hookrightarrow C_\mathbf{g_{j+1}})=\im(C_{\mathbf{k}\cap\mathbf{k'}}\hookrightarrow C_\mathbf{g_{j+1}}).\]
By Proposition~\ref{prop:colimC}, $C_{\mathbf{k}\cap\mathbf{k'}}\hookrightarrow C_\mathbf{k'}$ is also a split inclusion, so \[\im(C_\mathbf{k}\hookrightarrow C_\mathbf{g_{j+1}})+\im(C_\mathbf{k'}\hookrightarrow C_\mathbf{g_{j+1}})\leq C_\mathbf{g_{j+1}}\] decomposes as a direct sum of direct summands \[\im(C_\mathbf{k}\hookrightarrow C_\mathbf{g_{j+1}})\oplus \im((C_\mathbf{k'}/C_{\mathbf{k}\cap\mathbf{k'}})\hookrightarrow C_\mathbf{g_{j+1}})\leq C_\mathbf{g_{j+1}},\] hence is itself a direct summand.
We may continue inductively, with this process ending after finitely many steps.

By the inductive hypothesis, we have already chosen a basis for the direct summand $C_+\leq C_\mathbf{g_{j+1}}$.
Any element not in $C_+$ must be initial in $C_\mathbf{g_{j+1}}$, so we have not yet fixed any basis elements for the summand $C_\mathbf{g_{j+1}}/C_+$.
Extend the basis for $C_+$ in any way to a basis $\beta_\mathbf{g_{j+1}}$ for $C_\mathbf{g_{j+1}}$.
Given some other set of coset representatives $\mathbf{h}=\{h_1,\dots,h_{j+1}\}$, again use the order-preserving relabeling isomorphism $\rho_{\mathbf{g_{j+1}},\mathbf{h}}:C_\mathbf{g_{j+1}}\xrightarrow{\cong} C_\mathbf{h}$ to define a basis $\beta_\mathbf{h}$ for $C_\mathbf{h}$.
By hypothesis, the image in $C_n$ of the induced basis on $\rho_{\mathbf{g_{j+1}},\mathbf{h}}(C_+)$ agrees with the basis already constructed for for $\plus_{|\mathbf{k}|=j}\im(C_\mathbf{k}\hookrightarrow C_n)$.
We may do this for all $C_\mathbf{h}$ with $|\mathbf{h}|=j+1$ without conflict as all elements in $C_\mathbf{h}/\rho_{\mathbf{g_{j+1},\mathbf{h}}}(C_+)$ are initial in $C_\mathbf{h}$.
Thus, we have constructed a basis for $\plus_{\mathbf{h}=j+1}\im(C_\mathbf{h}\hookrightarrow C_n)$ which extends the bases for $\plus_{\mathbf{h}=i}\im(C_\mathbf{h}\hookrightarrow C_n)$, $i<j+1$ and is compatible with order-preserving relabeling isomorphisms $\rho_{\mathbf{h},\mathbf{h'}}$ for $|\mathbf{h}|=|\mathbf{h'}|\leq j+1$.
As $C_n=\colim_\mathbf{g}C_\mathbf{g}$, we obtain a basis for $C_n$ which is colimit-compatible.
\end{proof}

We conclude this section by discussing a useful interpretation of colimit-compatible bases for $C_n$ which relies on an interpretation of the $C^n_\mathbf{g}$ due to Orr \cite{Orr89}.
Suppose $\mathbf{g}$ is a set of coset representatives of size $|\mathbf{g}|=k$.
We have already seen in Section~\ref{subsec:zpi:colimC} that an element $\theta_\mathbf{g}\in C^n_\mathbf{g}$ is represented by a $k$-component link in $S^3$ in the sense that there exists a link $L\subset S^3$ which admits an $n$-basing $\phi_0$ relative to the $k$-component unlink such that $\theta=[\theta^O_n(L,\phi_0)]\in C^n_\mathbf{g}$.
By \cite{Orr89} Lemma 14, the group $C^n_\mathbf{g}$ is free abelian of rank $kN_n(k)-N_{n+1}(k)$, where $N_w(k)$ is the number of basic commutators of weight $w$ in the free group on $k$ letters, computed as \[N_w(k)=\frac{1}{w}\sum_{d|w}\phi(d)(k^{w/d}),\] where $\phi$ is the M\"{o}bius function (see \cite{MKS}).
The group $C^n_\mathbf{g}$ represents Milnor invariants of length $n+1$ of $k$-component links in $S^3$ with vanishing Milnor invariants of lengths $\leq n$.
In other words, any basis for $C^n_\mathbf{g}$ consists of a maximal collection of linearly independent Milnor invariants of length $n+1$ for $k$-component links with vanishing Milnor invariants of lengths $\leq n$.
Milnor's invariants are concordance invariants of ordered links, so elements of $C^n_\mathbf{g}$ can be thought of as invariants of links labeled with the elements of $\mathbf{g}$.

In the proof of Proposition~\ref{prop:basis}, we built a colimit-compatible basis for $C_n$ by inductively iterating a process in which we chose a basis for a single $C_\mathbf{g_k}$ with $|\mathbf{g_k}|=k$ and used this basis to define compatible bases for all other $C_\mathbf{h}$, $|\mathbf{h}|=k$.
The construction ensures that for $C_\mathbf{h}$ with $|\mathbf{h}|=k$ the bases chosen are compatible with previously chosen bases for $C_\mathbf{h}$ with $|\mathbf{h}|<k$.
In the language of Milnor's invariants, this inductive process works as follows:
Choose a maximal collection of linearly independent Milnor invariants of length $n+1$ for ordered 2-component links labeled by elements of $\mathbf{g_2}$ with the ordering induced by the ordering on cosets of $\langle[K]\rangle$.
This yields a basis for $C_{\mathbf{g_2}}$.
Then choose the ``same" basis for any $C_\mathbf{h}$, $|\mathbf{h}|=2$, but for links labeled by elements of $\mathbf{h}$.
Next, extend the bases defined on $C_\mathbf{h}$ for all proper subsets $\mathbf{h}\subsetneq\mathbf{g_3}$ to a maximal collection of linearly independent Milnor invariants of length $n+1$ for ordered 3-component links labeled by elements of $\mathbf{g_3}$.
This yields a basis for $C_{\mathbf{g_3}}$.
Choose the ``same" basis for any $C_\mathbf{h}$, $|\mathbf{h}|=3$, but for links labeled by elements of $\mathbf{h}$, and so on.

Although it is not apparent in the proofs of Propositions~\ref{prop:colimC} and~\ref{prop:basis}, this interpretation of colimit-compatible bases for $C_n$ shows that the colimit $C_n=\colim_\mathbf{g}C_\mathbf{g}$ stabilizes at or before $\mathbf{g}$ with $|\mathbf{g}|=n+1$.
Any Milnor invariant of length $n+1$ for any link can involve at most $n+1$ components of the link.
Thus, any element of $\im(C_\mathbf{h}\hookrightarrow C_n)$ with $|\mathbf{h}|>n+1$ is a linear combination of basis elements which are not initial in $C_\mathbf{h}$.
In other words, for $\mathbf{h}$ with $|\mathbf{h}|>n+1$ we have \[C_\mathbf{h}=\plus_{\mathbf{k}\subset\mathbf{h},|\mathbf{k}|=n+1}\im(C_\mathbf{k}\hookrightarrow C_\mathbf{h}),\] so for $k>n+1$ the summand $C_+\leq C_{\mathbf{g_k}}$ seen in the proof of Proposition~\ref{prop:basis} is, in fact, all of $C_\mathbf{g_k}$.


\subsection{Orderability and the $\pi_1(M)$-action on $C_n$} \label{subsec:zpi:action}

This section concludes our analysis of $H_3(\Gamma/\Gamma_n)$ and $C_n$ which began in Section~\ref{subsec:zpi:colimH}.
We will show that the $\Z[\pi_1(M)]$-module structure on $H_3(\Gamma/\Gamma_n)$ descends to a $\Z[\pi_1(M)]$-module structure on $C_n$ and then understand the module structure on $C_n$ given certain conditions on $M$ and the class $x=[K]\in[S^1,M]$ of the fixed knot $K$.

\begin{lemma} \label{lem:actioncn}
The $\Z[\pi_1(M)]$-module structure on $H_3(\Gamma/\Gamma_n)$ induces a $\Z[\pi_1(M)]$-module structure on $C_n$.
\end{lemma}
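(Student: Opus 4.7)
The plan is to check that the defining homomorphism $H_3(\Gamma/\Gamma_{n+1}) \to H_3(\Gamma/\Gamma_n)$ is $\pi_1(M)$-equivariant with respect to the $\Z[\pi_1(M)]$-module structures described just before Section~\ref{subsec:zpi:H1}. Once that is established, its image is a $\Z[\pi_1(M)]$-submodule of $H_3(\Gamma/\Gamma_n)$, and hence the quotient $C_n$ inherits a well-defined $\Z[\pi_1(M)]$-module structure.

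To see the equivariance, I would first recall that the actions in question arise from the two short exact sequences
\[
1 \to \Gamma/\Gamma_{n+1} \hookrightarrow \pi/\Gamma_{n+1} \twoheadrightarrow \pi_1(M) \to 1
\quad\text{and}\quad
1 \to \Gamma/\Gamma_n \hookrightarrow \pi/\Gamma_n \twoheadrightarrow \pi_1(M) \to 1,
\]
where each $\pi_1(M)$ acts on its respective normal subgroup by conjugation (after lifting through the surjection). The key observation is that the canonical projection $q : \pi/\Gamma_{n+1} \twoheadrightarrow \pi/\Gamma_n$ sits in a commutative diagram of short exact sequences with the identity on $\pi_1(M)$, and its restriction $q|_{\Gamma/\Gamma_{n+1}} : \Gamma/\Gamma_{n+1} \twoheadrightarrow \Gamma/\Gamma_n$ is precisely the map that induces the homomorphism defining $C_n$. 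Since conjugation in $\pi/\Gamma_{n+1}$ by any lift of $g \in \pi_1(M)$ is carried under $q$ to conjugation in $\pi/\Gamma_n$ by the $q$-image of that lift (which is itself a valid lift of $g$), the restricted map $q|_{\Gamma/\Gamma_{n+1}}$ is $\pi_1(M)$-equivariant on the nose.

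Functoriality of group homology then yields an equivariant map $q_* : H_3(\Gamma/\Gamma_{n+1}) \to H_3(\Gamma/\Gamma_n)$ of $\Z[\pi_1(M)]$-modules. Consequently $\im(q_*)$ is a $\Z[\pi_1(M)]$-submodule, and the quotient $C_n = H_3(\Gamma/\Gamma_n)/\im(q_*)$ is naturally a $\Z[\pi_1(M)]$-module, with the surjection $H_3(\Gamma/\Gamma_n) \twoheadrightarrow C_n$ equivariant by construction.

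The argument is essentially formal; the only mildly delicate point is verifying that the conjugation action on $H_*(\Gamma/\Gamma_n)$ induced by the extension is independent of the chosen lift (standard, since inner automorphisms act trivially on group homology), and that this independence is compatible across $n$. I would dispatch this by noting the standard identification of the $\pi_1(M)$-action with the one coming from III.8 of \cite{Brown}, already invoked in the proof of Proposition~\ref{prop:cosets}, and then checking that the commutative square of extensions above makes the two choices of lift match under $q$. Everything else is a formal consequence of the equivariance of $q_*$.
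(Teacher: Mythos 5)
Your argument is correct and is essentially the same as the paper's: both establish equivariance of the map $H_3(\Gamma/\Gamma_{n+1})\to H_3(\Gamma/\Gamma_n)$ via the commutative ladder of short exact sequences over $\pi_1(M)$, and then conclude that $C_n$ inherits the module structure. The only cosmetic difference is that the paper verifies well-definedness of the $\pi_1(M)$-action on $C_n$ by comparing two lifts of an element explicitly, whereas you invoke the general fact that the cokernel of a $\Z[\pi_1(M)]$-module map is again a $\Z[\pi_1(M)]$-module.
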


\begin{proof}
Consider the diagram of short exact sequences of groups
\begin{center}
\begin{tikzcd}
1 \arrow[r] & \Gamma/\Gamma_{n+1} \arrow[r,hookrightarrow] \arrow[d,twoheadrightarrow] & \pi/\Gamma_{n+1} \arrow[r,twoheadrightarrow] \arrow[d,twoheadrightarrow] & \pi_1(M) \arrow[r] \arrow[d,equals] & 1 \\
1 \arrow[r] & \Gamma/\Gamma_n \arrow[r,hookrightarrow] & \pi/\Gamma_n \arrow[r,twoheadrightarrow] & \pi_1(M) \arrow[r] & 1,
\end{tikzcd}
\end{center}
where the rows induce the $\Z[\pi_1(M)]$-module structures on $H_3(\Gamma/\Gamma_{n+1})$ and $H_3(\Gamma/\Gamma_n)$ given by the conjugation actions of $\pi/\Gamma_{n+1}$ and $\pi/\Gamma_n$, respectively, on $\Gamma/\Gamma_{n+1}$ and $\Gamma/\Gamma_n$.
Because conjugation by elements of $\pi/\Gamma_{n+1}$ becomes conjugation by elements of $\pi/\Gamma_n$ when we pass from $\Gamma/\Gamma_{n+1}$ to $\Gamma/\Gamma_n$ via the surjective homomorphism in the diagram and the surjection $\pi/\Gamma_{n+1}\twoheadrightarrow\pi/\Gamma_n$ is a homomorphism over $\pi_1(M)$, the induced map $H_3(\Gamma/\Gamma_{n+1})\to H_3(\Gamma/\Gamma_n)$ is a morphism of $\Z[\pi_1(M)]$-modules.

Now let $\theta\in C_n$ and $g\in\pi_1(M)$, and define $g\cdot\theta$ to be the image of $g\cdot\wt{\theta}$ in $C_n$, where $\wt{\theta}\in H_3(\Gamma/\Gamma_n)$ is any element whose image in $C_n$ is equal to $\theta$.
We show this is a well-defined group action.
Suppose that $\wt{\theta},\wt{\theta}'\mapsto\theta$.
Then $\wt{\theta}-\wt{\theta}'\in\im(H_3(\Gamma/\Gamma_{n+1})\to H_3(\Gamma/\Gamma_n))$.
Let $\eta\in H_3(\Gamma/\Gamma_{n+1})$ be any element whose image in $H_3(\Gamma/\Gamma_n)$ is $\wt{\theta}-\wt{\theta}'$.
Then, because $H_3(\Gamma/\Gamma_{n+1})\to H_3(\Gamma/\Gamma_n)$ is a morphism of $\Z[\pi_1(M)]$-modules, $g\cdot\eta\mapsto g\cdot(\wt{\theta}-\wt{\theta}')=g\cdot\wt{\theta}-g\cdot\wt{\theta}'$.
Thus, the images of $g\cdot\wt{\theta}$ and $g\cdot\wt{\theta}'$ in $C_n$ agree, and $g\cdot\theta$ is well-defined.
That this yields a group action follows from properties of the action of $\pi_1(M)$ on $H_3(\Gamma/\Gamma_n)$.
\end{proof}

We now combine the $\pi_1(M)$-actions on $H_3(\Gamma/\Gamma_n)$ and $C_n$ with the colimit structures given in Sections~\ref{subsec:zpi:colimH} and~\ref{subsec:zpi:colimC}.
Recall from Section~\ref{subsec:zpi:colimH} that $H_3(\Gamma/\Gamma_n)=\colim_\mathbf{g}H^n_\mathbf{g}$, where $H_\mathbf{g}=H^n_\mathbf{g}=H_3(F(\mu_\mathbf{g})/F(\mu_\mathbf{g})_n)$.
We will first study the $\pi_1(M)$-action on the $H_\mathbf{g}$.
If we consider unbased meridians $\mu_\mathbf{h}$ of the preimage $p^{-1}(K)$ of $K$ under the universal covering map $p:\wt{M}\to M$, which correspond to cosets of $\langle[K]\rangle\leq\pi_1(M)$, the action by an element of $\pi/\Gamma_n$ representing $g\in\pi_1(M)$ corresponds to left multiplication on cosets by Proposition~\ref{prop:cosets}.
As seen in Section~\ref{subsec:zpi:H1}, $g\cdot\mu_h=\mu_{gh}$.
So for unbased meridians, $g\cdot\mu_\mathbf{h}=\mu_{g\cdot\mathbf{h}}$, where if $\mathbf{h}=\{h_1,\dots,h_k\}$ we define $g\cdot\mathbf{h}=\{gh_1,\dots,gh_k\}$.
However, in the setting of studying $H_3(\Gamma/\Gamma_n)$, we fix a basing for each generator of $\Gamma/\Gamma_n$, that is, for each meridian of $p^{-1}(K)$.
Thus, the statements above for unbased meridians hold for based meridians only up to conjugation by elements of $\Gamma$.
For each $i$, we have $g\cdot\mu_{h_i}=\gamma_i\mu_{gh_i}\gamma_i^{-1}$ for some $\gamma_i\in\Gamma$.
The action of $g\in\pi_1(M)$ therefore induces an isomorphism \[H_\mathbf{h}=H_3(F(\mu_\mathbf{h})/F(\mu_\mathbf{h})_n)\xrightarrow{\cong}H_3(F(\mu_{g\cdot\mathbf{h}}^\gamma)/F(\mu_{g\cdot\mathbf{h}}^\gamma)_n),\] where $F(\mu_{g\cdot\mathbf{h}}^\gamma)$ is the free group generated by the $\gamma_i\mu_{gh_i}\gamma_i^{-1}$.
See \cite{Brown} Section III.8 for further details.

One motivation for passing from $H_3(\Gamma/\Gamma_n)$ to the cokernel $C_n$ is that the $\pi_1(M)$-action simplifies significantly.
Recall from Section~\ref{subsec:zpi:colimC} that $C_n=\colim_\mathbf{g}C_\mathbf{g}$, where $C_\mathbf{g}=C^n_\mathbf{g}=\coker(H^{n+1}_\mathbf{g}\to H^n_\mathbf{g})$.
As in Section~\ref{subsec:zpi:bases}, we rely on the interpretation of $C_\mathbf{g}$, $|\mathbf{g}|=k$, as Milnor invariants of length $n+1$ for $k$-component links in $S^3$ with vanishing Milnor invariants of lengths $\leq n$ \cite{Orr89}.
Theorem 11 of \cite{Orr89} states that such Milnor invariants are independent of a choice of basing for the link.
In other words, such Milnor invariants do not change if the isomorphism chosen between the $n\upth$ lower central quotients of the link group and those of the unlink is modified by conjugation of individual meridians.
Thus, the isomorphism $H_3(F(\mu_\mathbf{h})/F(\mu_\mathbf{h})_n)\xrightarrow{\cong}H_3(F(\mu_{g\cdot\mathbf{h}}^\gamma)/F(\mu_{g\cdot\mathbf{h}}^\gamma)_n)$ described above which is induced by the action of $g\in\pi_1(M)$ becomes a \emph{relabeling isomorphism} $C_\mathbf{h}\xrightarrow{\cong}C_{g\cdot\mathbf{h}}$ (see Definition~\ref{def:relabel}) when we pass to cokernels.
If the relabeling isomorphisms induced by the $\pi_1(M)$-action are order-preserving for some ordering on left cosets of $\langle[K]\rangle$, then the $\pi_1(M)$-action on $C_n$ will preserve a colimit-compatible basis for $C_n$.
We show that under this assumption $C_n$ is a \emph{permutation module}, that is, a free abelian group with a basis preserved by the $\pi_1(M)$-action (see, for instance, \cite{Brown} Section I.3).

\begin{proposition} \label{prop:permmodule}
If the left cosets of $\langle[K]\rangle\leq\pi_1(M)$ admit a total ordering which is preserved by the $\pi_1(M)$-action by left multiplication, then $C_n$ is a permutation module.
\end{proposition}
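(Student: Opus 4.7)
The plan is to combine the colimit-compatible basis construction of Proposition~\ref{prop:basis} with the description of the $\pi_1(M)$-action on $C_n$ given in the paragraphs preceding the proposition. The invariance of the coset ordering under left multiplication is exactly the input needed: first I would invoke Proposition~\ref{prop:basis} with this fixed ordering to produce a colimit-compatible basis $\beta$ for $C_n$, with compatible bases $\beta_\mathbf{g}$ for each $C_\mathbf{g}$ (in the sense of Definition~\ref{def:basis}). The candidate permutation basis is $\beta$, and it remains to check that $\pi_1(M)$ permutes it.

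Next I would unpack the $\pi_1(M)$-action on a fixed summand $C_\mathbf{h} \hookrightarrow C_n$. From the discussion after Lemma~\ref{lem:actioncn}, an element $g \in \pi_1(M)$ acts on the based meridian generators by $g \cdot \mu_{h_i} = \gamma_i \mu_{gh_i} \gamma_i^{-1}$ for some $\gamma_i \in \Gamma$, so at the level of $H^n_\mathbf{h}$ it yields an isomorphism to $H_3(F(\mu^\gamma_{g\cdot \mathbf{h}})/F(\mu^\gamma_{g\cdot \mathbf{h}})_n)$ which differs from $H^n_{g\cdot\mathbf{h}}$ only by conjugating individual meridians. Passing to the cokernels $C_\mathbf{h}$ and $C_{g\cdot\mathbf{h}}$, Theorem 11 of \cite{Orr89} (basing-independence of the length $n+1$ Milnor invariants) implies that these conjugations act trivially, so the induced map is precisely the relabeling isomorphism $\rho_{\mathbf{h}, g\cdot \mathbf{h}}\colon C_\mathbf{h} \xrightarrow{\cong} C_{g\cdot\mathbf{h}}$ from Definition~\ref{def:relabel}.

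Finally I would use the ordering hypothesis: since left multiplication by $g$ preserves the chosen total ordering on cosets, if $\mathbf{h} = \{h_1 < \cdots < h_k\}$ then $g\cdot \mathbf{h} = \{gh_1 < \cdots < gh_k\}$, so $\rho_{\mathbf{h}, g\cdot \mathbf{h}}$ is order-preserving. By the second bullet of Definition~\ref{def:basis}, $\rho_{\mathbf{h}, g\cdot \mathbf{h}}(\beta_\mathbf{h}) = \beta_{g\cdot\mathbf{h}}$. Running over all $\mathbf{h}$ and using that $C_n = \colim_\mathbf{g} C_\mathbf{g}$ with $\beta$ the union of the images of the $\beta_\mathbf{g}$, I conclude that the $\pi_1(M)$-action permutes $\beta$, so $C_n$ is a permutation module.

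The main obstacle is the identification in the second paragraph: verifying that after passing to cokernels the conjugations $\gamma_i \mu_{gh_i}\gamma_i^{-1}$ genuinely disappear and yield the relabeling isomorphism on the nose. This is a bookkeeping step mediated by Orr's basing-independence result; everything else is a direct assembly of the colimit structure and the ordering hypothesis.
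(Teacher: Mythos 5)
Your proposal is correct and follows essentially the same route as the paper: invoke Proposition~\ref{prop:basis} to get a colimit-compatible basis, use the discussion following Lemma~\ref{lem:actioncn} (with Orr's Theorem 11 handling the conjugation ambiguity) to see the action restricts to order-preserving relabeling isomorphisms $C_\mathbf{h}\to C_{g\cdot\mathbf{h}}$, and conclude from Definition~\ref{def:basis} that the basis is permuted. The only difference is cosmetic: you re-derive the relabeling-isomorphism step that the paper's proof cites as ``the above discussion.''
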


\begin{proof}
Suppose the left cosets of $\langle[K]\rangle$ admit such a total ordering, and fix such a total ordering.
By Proposition~\ref{prop:basis}, we may construct a colimit-compatible basis $\beta$ for $C_n$ with respect to the fixed total ordering.
By the above discussion, the $\pi_1(M)$-action on $C_n$ restricts to relabeling isomorphisms on the $C_\mathbf{h}$, where the action of $g\in\pi_1(M)$ induces a relabeling isomorphism $C_\mathbf{h}\xrightarrow{\cong}C_{g\cdot\mathbf{h}}$.
By assumption, these relabeling isomorphisms are order-preserving, sending the basis $\beta_\mathbf{h}$ to the basis $\beta_{g\cdot\mathbf{h}}$, both of which extend to the basis $\beta$.
Thus, since $C_n=\colim_\mathbf{g}C_\mathbf{g}$, the action of any $g\in\pi_1(M)$ induces a bijection $\beta\to\beta$.
By definition, as $C_n$ is a free abelian group with a $\pi_1(M)$-action that permutes the elements of a basis, $C_n$ is a permutation module.
\end{proof}

Under some further assumptions, we show that $C_n$ is a free $\Z[\pi_1(M)]$-module.
Recall from the statement of Theorem~\ref{thm:infinite} that $\mathcal{M}(k)$ denotes the number of linearly independent Milnor invariants of $k$-component links of length $k$ up to relabeling components.

\begin{corollary} \label{cor:freeC}
Suppose conditions (1)-(3) of Theorem~\ref{thm:infinite} are satisfied:
\begin{enumerate}[label=(\arabic*)]
\item $[K]$ is primitive and infinite order in $H_1(M)$.
\item The centralizer of any nontrivial power of $[K]\in\pi_1(M)$ is cyclic.
\item The left cosets of $\langle[K]\rangle\leq\pi_1(M)$ admit a total ordering which is invariant under the $\pi_1(M)$-action by left multiplication.
\end{enumerate}
Then $C_n$ is a free $\Z[\pi_1(M)]$-module of rank at least $\mathcal{M}(n+1)$.
\end{corollary}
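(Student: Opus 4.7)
The strategy is to start from Proposition~\ref{prop:permmodule}, which under (3) produces a colimit-compatible basis $\beta$ for $C_n$ on which $\pi_1(M)$ acts by permutations, and upgrade this permutation-module structure to a free $\Z[\pi_1(M)]$-module structure by showing that every basis element has trivial stabilizer. The rank lower bound then follows from counting initial basis elements in a single $C_{\mathbf{g}_0}$ with $|\mathbf{g}_0|=n+1$.

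The principal technical step is the stabilizer calculation. Fix $b\in\beta$ lying in $C_\mathbf{g}$ with $\mathbf{g}=\{g_1<\cdots<g_k\}$ and $k\geq 2$ (the case $k\leq 1$ is vacuous since $C_\mathbf{g}=0$). Because the $\pi_1(M)$-action on $\beta$ occurs through order-preserving relabelings (by (3) and the discussion preceding Proposition~\ref{prop:permmodule}), any $g\in\pi_1(M)$ fixing $b$ must fix each $g_i\langle[K]\rangle$ as a coset, so $g\in\bigcap_{i=1}^k g_i\langle[K]\rangle g_i^{-1}$. Writing $c=[K]$ and $g=g_1c^{m_1}g_1^{-1}=g_2c^{m_2}g_2^{-1}$ and setting $h=g_1^{-1}g_2$ (with $h\notin\langle c\rangle$ since $g_1,g_2$ represent distinct cosets), equality of the two expressions for $g$ yields $hc^{m_2}h^{-1}=c^{m_1}$. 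Conjugate elements share their order and $c$ has infinite order by (1), so $m_1=\pm m_2$. If $m_1=m_2=m\neq 0$, then $h\in C(c^m)$; condition (2) makes $C(c^m)=\langle d\rangle$ cyclic, whence $c=d^k$, and the primitivity of $[c]\in H_1(M)$ from (1) forces $k=\pm 1$, so $C(c^m)=\langle c\rangle$ and $h\in\langle c\rangle$, a contradiction. If $m_1=-m_2=m\neq 0$, projecting $hc^{-m}h^{-1}=c^m$ to the abelian group $H_1(M)$ gives $-m[c]=m[c]$, i.e., $2m[c]=0$, contradicting the infinite order of $[c]$. Therefore $m=0$ and $g=1$, so each orbit of $\beta$ is $\pi_1(M)$-free and $C_n$ is a free $\Z[\pi_1(M)]$-module.

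For the rank bound, first observe that $\pi_1(M)/\langle[K]\rangle$ must be infinite: a finite totally ordered set admits only the identity as an order-preserving bijection, so a finite quotient combined with (3) would force $\pi_1(M)=\langle[K]\rangle$ to be cyclic, impossible for the torsion-free fundamental group of a closed aspherical 3-manifold. Fix $\mathbf{g}_0$ with $|\mathbf{g}_0|=n+1$. By the discussion following Proposition~\ref{prop:basis}, the initial basis elements of $C_{\mathbf{g}_0}$ form a maximal linearly independent collection of length-$(n+1)$ Milnor invariants for ordered $(n+1)$-component links using every component, and their number is at least $\mathcal{M}(n+1)$, since $\mathcal{M}(n+1)$ only identifies such invariants up to relabeling. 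Triviality of the stabilizer of $\mathbf{g}_0$ ensures that these distinct initial basis elements lie in distinct $\pi_1(M)$-orbits, so each contributes an independent rank-one free $\Z[\pi_1(M)]$-summand to $C_n$.

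The main obstacle is coupling (1) and (2) in the stabilizer calculation: condition (2) alone gives only that $C(c^m)$ is cyclic, and promoting this to $C(c^m)=\langle c\rangle$ requires the primitivity in (1); separately, the orientation-reversing case $m_1=-m_2$ must be handled after projecting to $H_1(M)$ and using the infinite-order part of (1).
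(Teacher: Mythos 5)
Your overall structure mirrors the paper's proof: Proposition~\ref{prop:permmodule} gives a permutation module, conditions (1) and (2) force trivial stabilizers of basis elements so the module is free, and the rank bound comes from counting $\pi_1(M)$-orbits among initial basis elements of a fixed $C_{\mathbf{g}_0}$ with $|\mathbf{g}_0|=n+1$. Your observation that order-preservation forces $g$ to fix \emph{every} coset in $\mathbf{g}$ (rather than the paper's weaker ``at least two'') is a clean simplification, and your rank argument, which uses triviality of the setwise stabilizer of $\mathbf{g}_0$, is equivalent to the paper's.

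However, the deduction ``$m_1=\pm m_2$'' from ``conjugate elements share their order and $c$ has infinite order'' does not hold. For $m_1,m_2\neq 0$, both $c^{m_1}$ and $c^{m_2}$ already have infinite order, so equality of orders imposes no relation between the exponents; in the Baumslag--Solitar group $\langle a,b\mid bab^{-1}=a^2\rangle$ one has $bab^{-1}=a^2$ with $a$ of infinite order and $m_1=2m_2$. The fix is already contained in your handling of the $m_1=-m_2$ subcase and is exactly how the paper argues: abelianize $hc^{m_2}h^{-1}=c^{m_1}$ to $H_1(M)$, where conjugation is trivial, to obtain $(m_1-m_2)[c]=0$; condition (1) then forces $m_1=m_2$ outright, bypassing the spurious dichotomy. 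After that, your centralizer computation (cyclic by (2), equal to $\langle c\rangle$ by primitivity from (1), contradicting $h\notin\langle c\rangle$) is correct and matches the paper's version, which records the same content via the commutator identity $[h,x^i]=x^{j-i}$.
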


\begin{proof}
Assume conditions (1)-(3) hold.
By Proposition~\ref{prop:permmodule}, condition (3) implies that $C_n$ is a permutation module with respect to a colimit-compatible basis $\beta$.
In order to see that $C_n$ is a free module, it suffices to show that the stabilizer of each basis element is trivial (see, for instance, \cite{Brown} Section I.3).

Suppose, for the sake of contradiction, that there exists a basis element $\theta\in\beta$ and a nontrivial element $g\in\pi_1(M)$ such that $g\cdot\theta=\theta$.
By construction, $\theta$ is the image of some initial element $\theta_\mathbf{h}\in C_\mathbf{h}$ which belongs to a basis $\beta_\mathbf{h}$ whose image in $C_n$ extends to $\beta$.
The action of $g$ on $C_\mathbf{h}$ corresponds to the order-preserving relabeling isomorphism $C_\mathbf{h}\xrightarrow{\cong} C_{g\cdot\mathbf{h}}$ and sends $\theta_\mathbf{h}$ to another (initial) basis element since $\beta$ is colimit-compatible.
Because $\theta_\mathbf{h}$ and $g\cdot\theta_\mathbf{h}$ both have image $\theta\in C_n$, we must have $g\cdot\theta_\mathbf{h}=\theta_\mathbf{h}$ and $C_{g\cdot\mathbf{h}}=C_\mathbf{h}$ by Proposition~\ref{prop:initialC}.

Let $x=[K]\in\pi_1(M)$.
Since the action of $g$ is order-preserving on $\mathbf{h}$ and $|\mathbf{h}|\geq 2$, there exist two representatives of distinct cosets $h_1, h_2\in\mathbf{h}$ which are fixed by the action of $g$.
In other words, $gh_1\langle x\rangle=h_1\langle x\rangle$ and $gh_2\langle x\rangle=h_2\langle x\rangle$ by Proposition~\ref{prop:cosets}.
This implies $h_1^{-1}gh_1=x^i$ and $h_2^{-1}gh_2=x^j$ for some nonzero $i,j\in\Z$.
Equivalently, $g=h_1x^ih_1^{-1}=h_2x^jh_2^{-1}$.
Let $h=h_2^{-1}h_1\neq 1$.
Then $hx^ih^{-1}=x^j$, so that $[h,x^i]=x^{j-i}$.

We now leverage conditions (1) and (2).
Note that conditions (1) and (2) combine to imply that the centralizer $C(x^k)$ of any nontivial power of $x$ is $\langle x\rangle$.
If $j\neq i$, then $(j-i)[x]=0\in H_1(M)$ because the commutator $[h,x^i]$ is trivial in $H_1(M)$.
This contradicts condition (1).
On the other hand, if $j=i$, then $[h,x^i]=1$.
This implies $h\in C(x^i)=\langle x\rangle$, so that $h\langle x\rangle=\langle x\rangle$, hence $h_1\langle x\rangle=h_2\langle x\rangle$, a contradiction.
Thus, our assumption that the basis element $\theta\in\beta$ has nontrivial stabilizer is false, and $C_n$ is a free $\Z[\pi_1(M)]$-module.

To see that $C_n$ has rank at least $\mathcal{M}(n+1)$, choose any $\mathbf{h}$ with $|\mathbf{h}|=n+1$.
Then the basis $\beta_\mathbf{h}$ for $C_\mathbf{h}$ consists of linearly independent Milnor invariants of length $n+1$ for links labeled with $\mathbf{h}$.
In particular, $\beta_\mathbf{h}$ contains a subset $\beta_0\subseteq\beta_\mathbf{h}$ consisting of $\mathcal{M}(n+1)$ elements which represent the $\mathcal{M}(n+1)$ distinct classes of linearly independent Milnor invariants modulo relabeling components.
No two elements of $\beta_0$ can be in the same $\pi_1(M)$-orbit since $\pi_1(M)$ acts by order-preserving relabeling isomorphisms which on basis elements correspond to relabeling components involved in individual Milnor invariants.
Thus, $C_n$ has rank at least $\mathcal{M}(n+1)$ as a $\Z[\pi_1(M)]$-module.
\end{proof}


\subsection{Basic commutators and $H_2(\Gamma/\Gamma_n)$} \label{subsec:zpi:basic}

We now turn our attention to $H_2(\Gamma/\Gamma_n)$ and its accompanying $\Z[\pi_1(M)]$-module structure.
In this section, we will show that $H_2(\Gamma/\Gamma_n)$ can be described in terms of \emph{basic commutators} of the meridians $\mu_g$ of the preimage $p^{-1}(K)$ of $K$ under the universal covering $p:\wt{M}\to M$.
Similarly to Section~\ref{subsec:zpi:action}, we will then exhibit conditions under which $H_2(\Gamma/\Gamma_n)$ is a free $\Z[\pi_1(M)]$-module.
As in Sections~\ref{subsec:zpi:colimH}-\ref{subsec:zpi:bases}, our approach is to understand $H_2(\Gamma/\Gamma_n)$ as a colimit of $H_2(F(\mu_\mathbf{g})/F(\mu_\mathbf{g})_n)$. Then, we reinterpret each member of the resulting directed system in terms of basic commutators.
Before describing the colimit structure on $H_2(\Gamma/\Gamma_n)$, we express a canonical identification of $H_2(\Gamma/\Gamma_n)$ with $\Gamma_n/\Gamma_{n+1}$ which will enable our reinterpretation.

\begin{lemma} \label{lem:H2basic}
There is a canonical isomorphism of $\Z[\pi_1(M)]$-modules $H_2(\Gamma/\Gamma_n)\xrightarrow{\cong}\Gamma_n/\Gamma_{n+1}$.
\end{lemma}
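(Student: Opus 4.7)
The plan is to obtain the isomorphism from the five-term exact sequence in integral group homology associated to the extension $1 \to \Gamma_n \to \Gamma \to \Gamma/\Gamma_n \to 1$, which reads
\[H_2(\Gamma) \to H_2(\Gamma/\Gamma_n) \to (\Gamma_n^{ab})_{\Gamma/\Gamma_n} \to H_1(\Gamma) \to H_1(\Gamma/\Gamma_n) \to 0.\]
The first task is to observe that $H_2(\Gamma)=0$. This is essentially already contained in the proof of Proposition~\ref{prop:finfty}: there we saw that the inclusion $\wt{\mu}\hookrightarrow\wt{E_K}$ induces an isomorphism on $H_i(-;\Z)$ for $i=1,2$. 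Since $\wt{\mu}$ is a wedge of circles, $H_2(\wt{\mu})=0$, hence $H_2(\wt{E_K})=0$. By Hopf's theorem, $H_2(\Gamma)=H_2(\pi_1(\wt{E_K}))$ is a quotient of $H_2(\wt{E_K})$, so $H_2(\Gamma)=0$.

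Next, I would identify the middle term as $\Gamma_n/\Gamma_{n+1}$. The abelianization $\Gamma_n^{ab}=\Gamma_n/[\Gamma_n,\Gamma_n]$, equipped with the conjugation action of $\Gamma/\Gamma_n$, has coinvariants $\Gamma_n/[\Gamma,\Gamma_n]=\Gamma_n/\Gamma_{n+1}$. Moreover, for $n\geq 2$ one has $\Gamma_n\subseteq[\Gamma,\Gamma]$, so the connecting map $\Gamma_n/\Gamma_{n+1}\to H_1(\Gamma)=\Gamma/\Gamma_2$ is zero. Combining these observations with the vanishing $H_2(\Gamma)=0$ extracted above, the five-term sequence collapses to the desired isomorphism $H_2(\Gamma/\Gamma_n)\xrightarrow{\cong}\Gamma_n/\Gamma_{n+1}$.

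Finally, I would upgrade this to an isomorphism of $\Z[\pi_1(M)]$-modules. Because $\Gamma_n$ is characteristic in $\Gamma$, it is normal in $\pi$; thus conjugation by $\pi$ acts on the entire short exact sequence $1\to\Gamma_n\to\Gamma\to\Gamma/\Gamma_n\to 1$, and since inner automorphisms act trivially on homology and coinvariants of $\Gamma$-modules by $\Gamma$, this descends along $\pi\twoheadrightarrow\pi_1(M)$ to a $\pi_1(M)$-action on each term of the five-term sequence. Naturality then forces the resulting isomorphism to be $\Z[\pi_1(M)]$-equivariant, once we observe that the induced $\pi_1(M)$-action on $H_2(\Gamma/\Gamma_n)$ coincides with the one arising from $1\to\Gamma/\Gamma_n\to\pi/\Gamma_n\to\pi_1(M)\to 1$; both are the standard conjugation actions on group homology and agree on representing cocycles.

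The main point requiring care is really just bookkeeping: matching the two a priori different descriptions of the $\pi_1(M)$-action (one coming from conjugation on $\Gamma_n\trianglelefteq\pi$, one from the extension $\pi/\Gamma_n$), and verifying that the identification $(\Gamma_n^{ab})_{\Gamma/\Gamma_n}=\Gamma_n/\Gamma_{n+1}$ respects these actions. Beyond that, the argument is essentially formal once $H_2(\Gamma)=0$ is in hand.
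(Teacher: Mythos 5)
Your proposal is correct and takes essentially the same route as the paper: both apply the five-term (Stallings) exact sequence to $1\to\Gamma_n\to\Gamma\to\Gamma/\Gamma_n\to 1$, deduce $H_2(\Gamma)=0$ from $H_2(\wt{E_K})=0$ via Hopf's theorem, identify the coinvariants term with $\Gamma_n/[\Gamma,\Gamma_n]=\Gamma_n/\Gamma_{n+1}$, and obtain $\Z[\pi_1(M)]$-equivariance from the conjugation action of $\pi$ (resp.\ $\pi/\Gamma_n$) on the sequence. The only cosmetic difference is that you kill the connecting map by noting $\Gamma_n\subseteq[\Gamma,\Gamma]$ for $n\geq 2$, whereas the paper equivalently observes that $H_1(\Gamma)\to H_1(\Gamma/\Gamma_n)$ is an isomorphism.
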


\begin{proof}
Consider the diagram of groups in Figure~\ref{fig:groups} with exact rows and columns.
\begin{figure} 
\centering
\begin{tikzcd}
& 1 \arrow[d] & 1 \arrow[d] & 1 \arrow[d] & \\
1 \arrow[r] & \Gamma_n \arrow[r,hookrightarrow] \arrow[d,equal] & \Gamma \arrow[r,twoheadrightarrow] \arrow[d,hookrightarrow] & \Gamma/\Gamma_n \arrow[r] \arrow[d,hookrightarrow] & 1 \\
1 \arrow[r] & \Gamma_n \arrow[r,hookrightarrow] \arrow[d] & \pi \arrow[r,twoheadrightarrow] \arrow[d,twoheadrightarrow] & \pi/\Gamma_n \arrow[r] \arrow[d,twoheadrightarrow] & 1 \\
& 1 \arrow[r] & \pi_1(M) \arrow[r,equal] \arrow[d] & \pi_1(M) \arrow[r] \arrow[d] & 1 \\
& & 1 & 1 & \\
\end{tikzcd}
\caption{The diagram of groups in the proof of Lemma~\ref{lem:H2basic}.}
\label{fig:groups}
\end{figure}
The two longest columns induce compatible actions of $\pi_1(M)$ on the homology of $\Gamma$ and $\Gamma/\Gamma_n$ given by conjugation by $\pi$ and $\pi/\Gamma_n$, respectively.
Thus, homomorphisms on homology which are induced by the top row of the diagram are, in fact, $\Z[\pi_1(M)]$-module homomorphisms.

The desired isomorphism arises from the Stallings exact sequence (see \cite{Brown} Section VII.6) applied to the top row of Figure~\ref{fig:groups}, which yields \[H_2(\Gamma)\to H_2(\Gamma/\Gamma_n)\to H_1(\Gamma_n)_{\Gamma/\Gamma_n}\to H_1(\Gamma)\xrightarrow{\cong} H_1(\Gamma/\Gamma_n).\]
By the above discussion, this is an exact sequence of $\Z[\pi_1(M)]$-modules.
To see that we obtain the claimed isomorphism, note that $H_2(\wt{E_K})\twoheadrightarrow H_2(\Gamma)$ is surjective by Hopf's Theorem (see \cite{Brown} Section II.5), where $\wt{E_K}$ is the $\pi_1(M)$-cover of $E_K$.
However, as seen in the proof of Proposition~\ref{prop:finfty}, $H_2(\wt{E_K})=H_2(E_K;\Z[\pi_1(M)])=0$.
Thus, $H_2(\Gamma/\Gamma_n)\to H_1(\Gamma_n)_{\Gamma/\Gamma_n}$ is an isomorphism.
Furthermore, $H_1(\Gamma_n)_{\Gamma/\Gamma_n}$ is canonically isomorphic to $\Gamma_n/[\Gamma,\Gamma_n]=\Gamma_n/\Gamma_{n+1}$ (again see \cite{Brown} Section II.5).
This canonical identification is an isomorphism of $\Z[\pi_1(M)]$-modules as the $\pi_1(M)$ action is given by conjugation in each case.
\end{proof}

We now endow $H_2(\Gamma/\Gamma_n)$ with a colimit structure similar to the colimit structures on $H_3(\Gamma/\Gamma_n)$ and $C_n$ seen in Sections~\ref{subsec:zpi:colimH} and~\ref{subsec:zpi:colimC}.
For a set $\mathbf{g}$ of representatives of distinct cosets of $\langle[K]\rangle$, define $B_\mathbf{g}=B^n_\mathbf{g}=H_2(F(\mu_\mathbf{g})/F(\mu_\mathbf{g})_n)$, where we again fix a basing for each meridian $\mu_g$.
As in Sections~\ref{subsec:zpi:colimH} and~\ref{subsec:zpi:colimC}, we describe a colimit structure, this time on $H_2(\Gamma/\Gamma_n)=\Gamma_n/\Gamma_{n+1}$ in terms of the $B_\mathbf{g}$.

\begin{proposition} \label{prop:colimB}
The directed system $\left(\{B_\mathbf{g}\},\{\psi_{\mathbf{g},\mathbf{h}}\}\right)$ consists entirely of split inclusions and has colimit \[H_2(\Gamma/\Gamma_n)=\colim_\mathbf{g}B_\mathbf{g}.\]
\end{proposition}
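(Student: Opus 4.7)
The plan is to mirror the proof of Proposition~\ref{prop:colimH} essentially verbatim, replacing $H_3$ with $H_2$ throughout; no new geometric or algebraic idea should be needed, because the splitting is constructed at the level of free groups (before taking any homology) and the colimit identification is a purely formal consequence of Proposition~\ref{prop:finfty}.

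First I would verify that each $\psi_{\mathbf{g},\mathbf{h}}$ is a split inclusion. Given $\mathbf{g}\subset\mathbf{h}$, the projection $F(\mu_\mathbf{h})\twoheadrightarrow F(\mu_\mathbf{g})$ that sends generators indexed by $\mathbf{h}-\mathbf{g}$ to the identity is a retraction of the inclusion $F(\mu_\mathbf{g})\hookrightarrow F(\mu_\mathbf{h})$. Because the lower central series is functorial and respects surjections, this retraction descends to a splitting
\[
F(\mu_\mathbf{g})/F(\mu_\mathbf{g})_n \hookrightarrow F(\mu_\mathbf{h})/F(\mu_\mathbf{h})_n \twoheadrightarrow F(\mu_\mathbf{g})/F(\mu_\mathbf{g})_n,
\]
and applying $H_2$ produces a splitting of $\psi_{\mathbf{g},\mathbf{h}}:B_\mathbf{g}\hookrightarrow B_\mathbf{h}$ just as in the $H_3$ case.

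Next I would identify $H_2(\Gamma/\Gamma_n)$ with $\colim_\mathbf{g} B_\mathbf{g}$. By Proposition~\ref{prop:finfty}, $\Gamma/\Gamma_n\cong F(\infty)/F(\infty)_n$ with generators the meridians $\mu_g$. Any single element of $F(\infty)$ and any single iterated commutator involves only finitely many generators, so $F(\infty)=\colim_\mathbf{g} F(\mu_\mathbf{g})$ as a filtered colimit and the lower central series is preserved, giving $F(\infty)/F(\infty)_n=\colim_\mathbf{g} F(\mu_\mathbf{g})/F(\mu_\mathbf{g})_n$. Group homology commutes with filtered colimits of groups, so
\[
H_2(\Gamma/\Gamma_n)=\colim_\mathbf{g} H_2\bigl(F(\mu_\mathbf{g})/F(\mu_\mathbf{g})_n\bigr)=\colim_\mathbf{g} B_\mathbf{g}.
\]

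I do not anticipate any substantive obstacle: the only slightly non-elementary input is the commutation of group homology with filtered colimits, which is standard (follow it through the bar resolution, or note that each chain group is already a filtered colimit of the corresponding chain groups of the $F(\mu_\mathbf{g})/F(\mu_\mathbf{g})_n$ and that filtered colimits are exact). This is precisely the same appeal the author makes in the closing line of Proposition~\ref{prop:colimH}.
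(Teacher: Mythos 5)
Your proposal is correct and matches the paper exactly: the paper's proof of Proposition~\ref{prop:colimB} is the one-line statement that it is entirely similar to the proof of Proposition~\ref{prop:colimH}, and your writeup is precisely that proof with $H_3$ replaced by $H_2$.
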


\begin{proof}
This proof is entirely similar to the proof of Proposition~\ref{prop:colimH}.
\end{proof}

Note that $B_\mathbf{g}$ is canonically isomorphic to $F(\mu_\mathbf{g})_n/F(\mu_\mathbf{g})_{n+1}$ by the Stallings exact sequence.
For each $\mathbf{g}$, we have the following commutative square.
\begin{center}
\begin{tikzcd}
B_\mathbf{g} \arrow[r,"\cong"] \arrow[d,hookrightarrow] & F(\mu_\mathbf{g})_n/F(\mu_\mathbf{g})_{n+1} \arrow[d,hookrightarrow] \\
H_2(\Gamma/\Gamma_n) \arrow[r,"\cong"] & \Gamma_n/\Gamma_{n+1}
\end{tikzcd}
\end{center}
As seen in the proof of Lemma~\ref{lem:H2basic}, the isomorphism $H_2(\Gamma/\Gamma_n)\xrightarrow{\cong}\Gamma_n/\Gamma_{n+1}$ also arises from the Stallings exact sequence.
Thus, an equivalent interpretation of the colimit structure on $H_2(\Gamma/\Gamma_n)$ from Proposition~\ref{prop:colimB} is \[\Gamma_n/\Gamma_{n+1}=\colim_\mathbf{g}F(\mu_\mathbf{g})_n/F(\mu_\mathbf{g})_{n+1}.\]

One consequence of this reinterpretation is that the basings for individual meridians $\mu_g$ are irrelevant in the study of $H_2(\Gamma/\Gamma_n)$.
Any $n$-fold commutator in $\Gamma_n$ remains invariant, modulo $\Gamma_{n+1}$, under conjugation of individual generators by elements of $\Gamma$.
Thus, the action of $\pi_1(M)$ on $H_2(\Gamma/\Gamma_n)=\Gamma_n/\Gamma_{n+1}$ induces \emph{relabeling isomorphisms} $B_\mathbf{h}\xrightarrow{\cong}B_{g\cdot\mathbf{h}}$ as seen in Sections~\ref{subsec:zpi:bases}-\ref{subsec:zpi:action}.
Recall that this was not the case for $H_3(\Gamma/\Gamma_n)$, which partially motivated our study of the groups $C_n$.

We will often adopt the above reinterpretation of $H_2(\Gamma/\Gamma_n)$ as it allows us to apply the theory of \emph{basic commutators} to better understand $H_2(\Gamma/\Gamma_n)$.
Basic commutators arise in combinatorial group theory as bases for successive quotients of lower central subgroups, as we now discuss \cite{Hall, MKS}. \vspace{0.5em}

\noindent \textbf{Basic commutators.}
Let $F=\langle x_1,\dots x_k\rangle$ be the free group on $k$ letters.
M. Hall \cite{Hall} developed the \emph{commutator collection process} which, in particular, provides a totally ordered distinguished collection of commutators in $x_1,\dots,x_k$ called \emph{basic commutators}, each with a \emph{weight} $w\in\N$, such that the basic commutators of weight $w$ are a basis for the free abelian group $F_w/F_{w+1}$.
This group has rank \[N_w(k)=\frac{1}{w}\sum_{d|w}\phi(d)(k^{w/d}),\] where $\phi$ is the M\"{o}bius function \cite{MKS} (see also Section~\ref{subsec:zpi:bases}).

We define the basic commutators inductively by weight.
The total ordering is chosen so that if $b$ and $b'$ are basic of weights $w(b)$ and $w(b')$, respectively, with $w(b)<w(b')$, then $b<b'$.
Within each fixed weight $w$, we fix some ordering.
The basic commutators of weight 1 are $x_1,\dots,x_k$, and we choose the ordering $x_1<\cdots<x_k$.
Now suppose we have defined basic commutators of weights $\leq w$, totally ordered by weight and some ordering within each weight.
The basic commutators of weight $w+1$ are commutators $[a,b]$ such that (1) $a$ and $b$ are basic with $a<b$, (2) if $b=[c,d]$, then $a\geq c$, and (3) $w(a)+w(b)=w+1$.
Again, we choose some ordering on the basic commutators of weight $w+1$. \vspace{0.5em}

We now apply the interpretation of $H_2(\Gamma/\Gamma_n)=\Gamma_n/\Gamma_{n+1}$ as the colimit of the $F(\mu_\mathbf{g})_n/F(\mu_\mathbf{g})_{n+1}$ in order to discuss initial elements and to build a \emph{colimit-compatible basis} for $H_2(\Gamma/\Gamma_n)$, analogous to our work in Sections~\ref{subsec:zpi:colimC} and~\ref{subsec:zpi:bases}.
From now on, we will write $B_\mathbf{g}$ to mean both $H_2(F(\mu_\mathbf{g})/F(\mu_\mathbf{g})_n)$ and $F(\mu_\mathbf{g})_n/F(\mu_\mathbf{g})_{n+1}$.
Each $B_\mathbf{g}$ admits a basis of basic commutators, so our eventual goal is to construct a colimit-compatible basis for $H_2(\Gamma/\Gamma_n)$ using basic commutators.

We begin by reinterpreting the split inclusions $B_\mathbf{g}\hookrightarrow B_\mathbf{h}$ for $\mathbf{g}\subset\mathbf{h}$ from Proposition~\ref{prop:colimB}.
Consider a total ordering on the cosets of $\langle[K]\rangle$, hence a total ordering on the elements of $\mathbf{h}$ which restricts to a total ordering on the elements of $\mathbf{g}$.
The groups $B_\mathbf{g}$ and $B_\mathbf{h}$ are generated by basic commutators of length $n+1$ in the meridians $\mu_\mathbf{g}$ and $\mu_\mathbf{h}$, respectively.
Because the total order on $\mathbf{h}$ restricts to the total order on $\mathbf{g}$, any basic commutator in $\mu_\mathbf{g}$ is automatically a basic commutator in $\mu_\mathbf{h}$.
Thus, the split inclusion $B_\mathbf{g}\hookrightarrow B_\mathbf{h}$ is determined by the inclusion of the basis of basic commutators for $B_\mathbf{g}$ into the basis of basic commutators for $B_\mathbf{h}$.

Next, we define \emph{initial elements} in a similar manner to Sections~\ref{subsec:zpi:colimH} and~\ref{subsec:zpi:colimC}.

\begin{definition} \label{def:initialB}
For $c_\mathbf{g}\mapsto c$ under $B_\mathbf{g}\hookrightarrow H_2(\Gamma/\Gamma_n)$, we say $c_\mathbf{g}$ is \emph{initial} for $c$ if there does not exist $c_\mathbf{h}\in C_\mathbf{h}$ with $\mathbf{h}\subsetneq\mathbf{g}$ and $c_\mathbf{h}\mapsto c$.
\end{definition}

Analogous to Propositions~\ref{prop:initialH} and~\ref{prop:initialC}, we prove the following.

\begin{proposition} 
\label{prop:initialB}
For each element $c\in H_2(\Gamma/\Gamma_n)$, there exists a unique initial element.
\end{proposition}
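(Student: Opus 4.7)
The plan is to mirror the structure of the proofs of Propositions~\ref{prop:initialH} and~\ref{prop:initialC}, replacing Orr's link-theoretic description of the $H_\mathbf{g}$ and $C_\mathbf{g}$ with the basic-commutator description of the $B_\mathbf{g}$ developed immediately before the statement. First, existence of an initial element for any $c\in H_2(\Gamma/\Gamma_n)$ is automatic: since $H_2(\Gamma/\Gamma_n)=\colim_\mathbf{g}B_\mathbf{g}$ by Proposition~\ref{prop:colimB}, some $c_\mathbf{g}\in B_\mathbf{g}$ maps to $c$, and because $\mathbf{g}$ is a finite set of coset representatives, iteratively passing to proper subsets on which $c$ still has a preimage terminates at an initial element.

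For uniqueness, as in the proofs of Propositions~\ref{prop:initialH} and~\ref{prop:initialC}, it suffices to establish
\[\im\bigl(B_\mathbf{g}\hookrightarrow B_{\mathbf{g}\cup\mathbf{h}}\bigr)\cap\im\bigl(B_\mathbf{h}\hookrightarrow B_{\mathbf{g}\cup\mathbf{h}}\bigr)=\im\bigl(B_{\mathbf{g}\cap\mathbf{h}}\hookrightarrow B_{\mathbf{g}\cup\mathbf{h}}\bigr)\]
for all finite $\mathbf{g},\mathbf{h}$. The inclusion $\supseteq$ is immediate from the directed system. For $\subseteq$, I would fix any total order on $\mathbf{g}\cup\mathbf{h}$ and use its restriction to each of $\mathbf{g}$, $\mathbf{h}$, and $\mathbf{g}\cap\mathbf{h}$. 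By M. Hall's basic commutator theorem, $B_{\mathbf{g}\cup\mathbf{h}}\cong F(\mu_{\mathbf{g}\cup\mathbf{h}})_n/F(\mu_{\mathbf{g}\cup\mathbf{h}})_{n+1}$ is free abelian with basis the set of basic commutators of weight $n$ in the generators $\mu_{\mathbf{g}\cup\mathbf{h}}$. Because restricting the order does not alter which iterated commutators are basic, the basic commutators in $\mu_\mathbf{g}$ form a subset of those in $\mu_{\mathbf{g}\cup\mathbf{h}}$, and the split inclusion $B_\mathbf{g}\hookrightarrow B_{\mathbf{g}\cup\mathbf{h}}$ identifies $B_\mathbf{g}$ with the direct summand spanned by basic commutators involving only letters from $\mu_\mathbf{g}$; analogous statements hold for $\mathbf{h}$ and $\mathbf{g}\cap\mathbf{h}$.

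The intersection $\im(B_\mathbf{g})\cap\im(B_\mathbf{h})$ is then the intersection of the $\Z$-spans of two disjoint-basis summands of a free abelian group with a basis indexed by basic commutators. That intersection is spanned by the basic commutators common to both subsets, that is, those involving only letters in $\mu_\mathbf{g}\cap\mu_\mathbf{h}=\mu_{\mathbf{g}\cap\mathbf{h}}$, which is precisely $\im(B_{\mathbf{g}\cap\mathbf{h}}\hookrightarrow B_{\mathbf{g}\cup\mathbf{h}})$. This establishes the forward containment and completes uniqueness by the same argument as in Proposition~\ref{prop:initialH}: if $c_\mathbf{g}$ and $c_\mathbf{h}$ were both initial for $c$, then they share a common image $c_{\mathbf{g}\cup\mathbf{h}}\in B_{\mathbf{g}\cup\mathbf{h}}$, which by the above equality lifts to $B_{\mathbf{g}\cap\mathbf{h}}$, forcing $\mathbf{g}=\mathbf{g}\cap\mathbf{h}=\mathbf{h}$ by initiality.

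The main potential obstacle is verifying cleanly that the image of $B_\mathbf{g}$ in $B_{\mathbf{g}\cup\mathbf{h}}$ really is the $\Z$-span of the basic commutators in $\mu_\mathbf{g}$, as opposed to merely containing that span. This reduces to the observation that the inclusion-induced homomorphism $F(\mu_\mathbf{g})\hookrightarrow F(\mu_{\mathbf{g}\cup\mathbf{h}})$ sends basic commutators to basic commutators under a compatible ordering, and that it is split by the retraction killing $\mu_{\mathbf{h}-\mathbf{g}}$ used already in the proof of Proposition~\ref{prop:colimH}. Once this identification is in hand, the argument is a purely combinatorial statement about bases of free abelian groups.
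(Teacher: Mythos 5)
Your proof is correct and follows essentially the same route as the paper's: reduce uniqueness to the set equality on intersections of images, then use the identification of $B_\mathbf{g}$ with the span of basic commutators of weight $n$ in $\mu_\mathbf{g}$ (under a compatible ordering) inside $B_{\mathbf{g}\cup\mathbf{h}}$, so that the intersection of the two spans is the span of the common basic commutators, i.e. those in $\mu_{\mathbf{g}\cap\mathbf{h}}$. You spell out the step the paper treats tersely (that the image of $B_\mathbf{g}$ is exactly, not merely at least, the span of basic commutators in $\mu_\mathbf{g}$, via the splitting retraction), which is a welcome clarification; the phrase ``disjoint-basis summands'' is a slight misnomer since the two basis subsets overlap precisely in the commutators over $\mu_{\mathbf{g}\cap\mathbf{h}}$, but your subsequent sentence makes the intended (correct) argument clear.
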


\begin{proof}
As in the proofs of Propositions~\ref{prop:initialH} and~\ref{prop:initialC}, it suffices to prove that \[\im(B_\mathbf{g}\hookrightarrow B_{\mathbf{g}\cup\mathbf{h}})\cap\im(B_\mathbf{h}\hookrightarrow B_{\mathbf{g}\cup\mathbf{h}})=\im(B_{\mathbf{g}\cap\mathbf{h}}\hookrightarrow B_{\mathbf{g}\cup\mathbf{h}})\] for all $\mathbf{g},\mathbf{h}$.
As before, the reverse inclusion $\supseteq$ is immediate, and we prove the forward inclusion $\subseteq$ using our reinterpretation of the split inclusions in the directed system $\{B_\mathbf{g}\}$.

Any element in the intersection of the images of $B_\mathbf{g}$ and $B_\mathbf{h}$ in $B_{\mathbf{g}\cup\mathbf{h}}$ can be expressed as a linear combination of basic commutators in each of $\mu_\mathbf{g}$ and $\mu_\mathbf{h}$.
This is only possible if it can be expressed as a linear combination of basic commutators in $\mu_{\mathbf{g}\cap\mathbf{h}}$ and therefore lies in the image of $B_{\mathbf{g}\cap\mathbf{h}}$.
\end{proof}

\begin{corollary}\label{cor:colimB}
The abelian group $H_2(\Gamma/\Gamma_n)$ is isomorphic to $\Z^\infty$, that is, $H_2(\Gamma/\Gamma_n)$ is a countably infinitely generated free abelian group.
\end{corollary}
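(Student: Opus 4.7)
The plan is to mirror the proof of Corollary~\ref{cor:colimC} almost verbatim, using the colimit structure from Proposition~\ref{prop:colimB} together with the uniqueness-of-initial-element statement in Proposition~\ref{prop:initialB}. First, I would invoke Proposition~\ref{prop:colimB} to realize $H_2(\Gamma/\Gamma_n)$ as the colimit $\colim_\mathbf{g} B_\mathbf{g}$ over the directed system of split inclusions, where each $B_\mathbf{g} \cong F(\mu_\mathbf{g})_n/F(\mu_\mathbf{g})_{n+1}$ is, by Hall's basic commutator theorem, free abelian of finite rank $N_n(|\mathbf{g}|)$ with basis the basic commutators of weight $n$ in $\mu_\mathbf{g}$. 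Since $\pi_1(M)$ is finitely generated, hence countable, the set of cosets of $\langle[K]\rangle$ is countable, and so there are only countably many finite sets $\mathbf{g}$ of coset representatives.

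Next, I would construct a (colimit-compatible) basis for $H_2(\Gamma/\Gamma_n)$ by the same inductive procedure as in Proposition~\ref{prop:basis}: choose bases for all $B_\mathbf{g}$ with $|\mathbf{g}|=2$, then inductively extend to $|\mathbf{g}|=j+1$ by extending a basis of the direct summand generated by the images of $B_\mathbf{h}$ for $\mathbf{h}\subsetneq\mathbf{g}$, with the summand property guaranteed by the split-inclusion structure and by Proposition~\ref{prop:initialB}. This produces a countable basis for $\colim_\mathbf{g}B_\mathbf{g}$, so $H_2(\Gamma/\Gamma_n)$ is countable and free abelian.

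To see the rank is infinite, I would look at $|\mathbf{g}|=2$. For distinct such $\mathbf{g},\mathbf{h}$, the intersection $\mathbf{g}\cap\mathbf{h}$ has size at most one, so $F(\mu_{\mathbf{g}\cap\mathbf{h}})_n$ is trivial for $n\geq 2$ and thus $B_{\mathbf{g}\cap\mathbf{h}}=0$; by Proposition~\ref{prop:initialB} the images of $B_\mathbf{g}$ and $B_\mathbf{h}$ meet trivially in $H_2(\Gamma/\Gamma_n)$. Each such $B_\mathbf{g}$ is itself nontrivial, since the iterated basic commutator $[\mu_{g_1},[\mu_{g_1},\ldots,[\mu_{g_1},\mu_{g_2}]\cdots]]$ of weight $n$ is a basis element. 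Because there are infinitely many $2$-element subsets $\mathbf{g}$ of the coset set, the group $H_2(\Gamma/\Gamma_n)$ contains an internal direct sum $\bigoplus_{|\mathbf{g}|=2}B_\mathbf{g}\cong\Z^\infty$, and being a countable free abelian group of infinite rank it is isomorphic to $\Z^\infty$.

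The only substantive point — and thus the ``main obstacle'' — is verifying that the colimit remains free abelian rather than picking up any divisibility; but this is exactly what the split-inclusion structure of Proposition~\ref{prop:colimB} together with Proposition~\ref{prop:initialB} buys us, so the argument proceeds with no new ingredients beyond those already established for $C_n$.
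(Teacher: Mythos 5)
Your proof is correct and follows essentially the same route as the paper, which simply notes that the argument for Corollary~\ref{cor:colimC} carries over once one replaces ``$C_\mathbf{g}$ free abelian of finite rank'' by ``$B_\mathbf{g}\cong F(\mu_\mathbf{g})_n/F(\mu_\mathbf{g})_{n+1}$ free abelian of finite rank.'' The extra detail you supply — the explicit basic-commutator rank count, the right-nested commutator witnessing $B_\mathbf{g}\neq 0$ for $|\mathbf{g}|=2$, and the appeal to a colimit-compatible basis — is consistent with the paper's later development (Proposition~\ref{prop:basisB}) but is not required for the corollary itself.
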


\begin{proof}
The proof is entirely similar to the proof of Corollary~\ref{cor:colimC}. In this case, we use that $B_\mathbf{g}$ is free abelian of finite rank for each $\mathbf{g}$.
\end{proof}

Similar to our analysis of $C_n$ in Section~\ref{subsec:zpi:bases}, we now construct a \emph{colimit-compatible basis} for $H_2(\Gamma/\Gamma_n)$.
In contrast to Section~\ref{subsec:zpi:bases}, in this instance we will require an \emph{ordered} basis, as basic commutators of a given weight rely on the ordering chosen for basic commutators  of lower weights.

\begin{definition} \label{def:basisB}
Fix a total ordering on the cosets of $\langle[K]\rangle$, thus fixing an ordering on each set of coset representatives. We will say that an ordered basis $\beta$ for $H_2(\Gamma/\Gamma_n)$ is \emph{colimit-compatible} if the following hold.
\begin{itemize}
\item For each set of coset representatives $\mathbf{g}$, the basis $\beta$ is an order-preserving extension of the image of an ordered basis $\beta_\mathbf{g}$ for $B_\mathbf{g}$ under the split inclusion $B_\mathbf{g}\hookrightarrow H_2(\Gamma/\Gamma_n)$.
\item Every order-preserving relabeling isomorphism $\rho_{\mathbf{g},\mathbf{h}}:B_\mathbf{g}\xrightarrow{\cong} B_\mathbf{h}$ sends the ordered basis $\beta_\mathbf{g}$ to the ordered basis $\beta_\mathbf{h}$.
\end{itemize}
\end{definition}

\begin{proposition} \label{prop:basisB}
Given any total ordering on the cosets of $\langle[K]\rangle$, there exists a colimit-compatible ordered basis of basic commutators for $H_2(\Gamma/\Gamma_n)$.
\end{proposition}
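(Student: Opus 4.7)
The plan is to construct the basis in one stroke rather than inductively as in the proof of Proposition~\ref{prop:basis}, by leveraging the rigidity of Hall's commutator collection process. The key observation is that Hall's process is determined entirely by the ordering of the free generators: once an ordering on the generators is fixed, the set of basic commutators of each weight, together with a natural total ordering on them (e.g.\ first by weight, then lexicographically by bracketing-tree shape with ties broken using the generator ordering), is uniquely determined. All compatibility claims will follow from functoriality of this process with respect to order-preserving injections of generating sets.

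First I would fix a global rule for ordering basic commutators that depends only on the ordered set of letters appearing in the commutator. Concretely, given the fixed total ordering on cosets of $\langle[K]\rangle$ (hence on each $\mu_{\mathbf{g}}$), I would run Hall's process for $F(\mu_{\mathbf{g}})$ to produce an ordered set $\beta_{\mathbf{g}}^{\text{all}}$ of all basic commutators, and let $\beta_{\mathbf{g}}\subset\beta_{\mathbf{g}}^{\text{all}}$ be those of weight exactly $n$; this is a basis of $F(\mu_{\mathbf{g}})_n/F(\mu_{\mathbf{g}})_{n+1}\cong B_{\mathbf{g}}$ by Hall's theorem. The definition of each $\beta_{\mathbf{g}}$ thus depends only on the restriction of the coset ordering to $\mathbf{g}$.

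Next I would verify the two colimit-compatibility conditions. For $\mathbf{g}\subset\mathbf{h}$, the ordering on $\mu_{\mathbf{h}}$ restricts to the ordering on $\mu_{\mathbf{g}}$, so an inductive check on weight shows that every basic commutator in $\mu_{\mathbf{g}}$ is also a basic commutator in $\mu_{\mathbf{h}}$ (the ``basic'' conditions $a<b$ and $a\geq c$ when $b=[c,d]$ reference only letters actually appearing in the commutator), and that the induced ordering on $\beta_{\mathbf{g}}$ is the subordering of the ordering on $\beta_{\mathbf{h}}$. Hence the split inclusion $B_{\mathbf{g}}\hookrightarrow B_{\mathbf{h}}$ of Proposition~\ref{prop:colimB} carries $\beta_{\mathbf{g}}$ order-preservingly into $\beta_{\mathbf{h}}$. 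For an order-preserving relabeling isomorphism $\rho_{\mathbf{g},\mathbf{h}}:B_{\mathbf{g}}\xrightarrow{\cong}B_{\mathbf{h}}$, the underlying map of free groups is an order-isomorphism on generators, so by the same inductive argument it sends basic commutators to basic commutators and preserves the natural ordering, yielding $\rho_{\mathbf{g},\mathbf{h}}(\beta_{\mathbf{g}})=\beta_{\mathbf{h}}$. Passing to the colimit via Proposition~\ref{prop:colimB}, the union $\beta=\bigcup_{\mathbf{g}}\beta_{\mathbf{g}}$ is a well-defined ordered basis for $H_2(\Gamma/\Gamma_n)$ that is colimit-compatible in the sense of Definition~\ref{def:basisB}.

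The main obstacle I anticipate is pinning down a global tie-breaking rule at each weight that commutes both with the inclusions $\mu_{\mathbf{g}}\subset\mu_{\mathbf{h}}$ and with relabelings of the generators by order-isomorphisms, since Hall's original treatment only fixes an ordering after the generators are fixed. The safe remedy, which I would verify in detail, is to define the intra-weight ordering intrinsically from the rooted planar tree underlying each commutator together with the induced ordering on its leaves; such an ordering depends only on the order-isomorphism type of the decorated tree, and hence is automatically respected by every inclusion of generating sets and by every order-preserving relabeling. Once this rule is in place, both compatibility conditions become immediate consequences of the functoriality of Hall's process.
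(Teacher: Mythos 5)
Your proposal is correct, and it takes a genuinely different — arguably more conceptual — route than the paper's. The paper builds the ordered basis through a nested double induction (first on the weight $k$, then on $|\mathbf{g}|$), fixing the intra-weight ordering piecemeal: choose an arbitrary ordering on the ``new'' initial basic commutators for one representative $C_{\mathbf{g}_{j+1}}$ at each stage, then propagate to all other $\mathbf{h}$ of the same cardinality via order-preserving relabeling isomorphisms, and check consistency by hand. You instead observe that the only arbitrariness in Hall's process is the intra-weight tie-break, and that if one fixes a \emph{single} tie-breaking rule depending only on the bracketing tree and the relative order of the leaves (e.g.\ compare tree shapes canonically, then compare the left-to-right leaf sequences lexicographically in the coset order), then the entire recursion can be run once over the global generating set $\{\mu_g\}$; both compatibility conditions of Definition~\ref{def:basisB} then fall out of the functoriality of this rule with respect to order-preserving injections of generating sets, with no separate induction on $|\mathbf{g}|$ needed. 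What the paper's induction buys is self-containment with no appeal to a canonical global ordering; what your approach buys is a shorter argument in which the consistency checks are automatic rather than re-verified stage by stage. The one item you defer — that such an intrinsic tie-break exists and genuinely totally orders each weight — is handled exactly by the (tree shape, leaf sequence) rule you propose, since tree shape plus ordered leaf sequence uniquely determines a commutator; the inductive check that $\mu_{\mathbf{g}}$-basic commutators remain basic in $\mu_{\mathbf{h}}$, and that relabelings send basics to basics, is then routine because the conditions $a<b$ and $a\geq c$ (when $b=[c,d]$) reference only the intrinsic comparison between commutators whose letters all lie in the relevant subset. So the gap you flag is real but small, and your remedy closes it.
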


\begin{proof}
Our strategy is similar the proof of Proposition~\ref{prop:basis}, except now we inductively impose an order on the basis we construct.
Basis elements will be ordered first by $|\mathbf{g}|$, then by the lexicographic ordering on sets $\mathbf{g}$ of the same size induced by the ordering on cosets, and then by some fixed ordering which we ensure is compatible with the action of $\pi_1(M)$.

Because basic commutators of a given weight rely on the ordering chosen for basic commutators of lower weights, we apply nested induction arguments, using induction to find a colimit-compatible ordered basis of basic commutators for $H_2(\Gamma/\Gamma_k)=\Gamma_k/\Gamma_{k+1}$ for each $2\leq k\leq n$.
We begin with the basic commutators of weight 1, which are the generators $\mu_g$, ordered by the ordering on cosets.
Just as with the colimit structure on $C_n$ from Section~\ref{subsec:zpi:colimC}, the colimit structure on $H_2(\Gamma/\Gamma_n)$ stabilizes, this time at $\mathbf{g}$ with $|\mathbf{g}|=n$, as any basic commutator of length $n$ can involve at most $n$ generators.
In other words, \[H_2(\Gamma/\Gamma_n)=\colim_{|\mathbf{g}|\leq n}B_\mathbf{g}.\]

We begin with the base case $n=2$.
All $B^2_\mathbf{g}$ with $|\mathbf{g}|=2$ are generated by a single basic commutator.
If $\mathbf{g}=\{g_1,g_2\}$ with $g_1<g_2$, then $B^2_\mathbf{g}$ is generated by the basic commutator $[\mu_{g_1},\mu_{g_2}]$.
For each such $\mathbf{g}$, this element is determined by the ordering on cosets.
Any order-preserving relabeling isomorphism respects the single-element bases consisting of these commutators.
As discussed above, when $n=2$ the colimit stabilizes at $|\mathbf{g}|=2$.
We can order the resulting basis for $H_2(\Gamma/\Gamma_2)$ using the lexicographic order induced by the ordering on cosets.
This proves the base case $n=2$.

Next, suppose we have constructed an ordered colimit-compatible basis of basic commutators for $H_2(\Gamma/\Gamma_k)$.
We will construct such a basis for $H_2(\Gamma/\Gamma_{k+1})$ by inducting on $|\mathbf{g}|$, mirroring our argument from the proof of Proposition~\ref{prop:basis}.
Consider the base case $|\mathbf{g}|=2$ for this nested induction.
Fix some $\mathbf{g_2}$ with $|\mathbf{g_2}|=2$.
Using the colimit-compatible ordered basis for $H_2(\Gamma/\Gamma_k)$, which includes the image of a basis of basic commutators of length $k$ for $B^k_\mathbf{g_2}$, define a basis of basic commutators of length $k+1$ for $B^{k+1}_\mathbf{g_2}$.
Choose any ordering for this basis, and use order-preserving relabeling isomorphisms to define ordered bases for each $B^{k+1}_\mathbf{h}$ with $|\mathbf{h}|=2$.
Proposition~\ref{prop:initialB} implies the images of $B^{k+1}_\mathbf{g}$ and $B^{k+1}_\mathbf{h}$ intersect trivially for $\mathbf{g}\neq\mathbf{h}$ with $|\mathbf{g}|=|\mathbf{h}|=2$, so we can define these bases without conflict.
Order all basis elements using first the lexicographic ordering on the $\mathbf{g}$ and then the ordering on the basis for each $B^{k+1}_\mathbf{g}$.
This completes the base case, as we have produced an ordered basis of basic commutators for $\bigoplus_{|\mathbf{g}|=2}\im(B^{k+1}_\mathbf{g}\hookrightarrow H_2(\Gamma/\Gamma_{k+1}))$ which is compatible with order-preserving relabeling isomorphisms and extends the (empty) ordered basis for $\bigoplus_{|\mathbf{g}|=1}\im(B^{k+1}_\mathbf{g}\hookrightarrow H_2(\Gamma/\Gamma_{k+1}))$.

Now suppose we have chosen an ordered basis of basic commutators for $\plus_{|\mathbf{g}|=j}\im(B^{k+1}_\mathbf{g}\hookrightarrow H_2(\Gamma/\Gamma_{k+1}))$ which extends the bases of basic commutators for $\plus_{|\mathbf{g}|=i}\im(B^{k+1}_\mathbf{g}\hookrightarrow H_2(\Gamma/\Gamma_{k+1}))$, $i<j$, and is compatible with order-preserving relabeling isomorphisms.
We extend to a basis of basic commutators for $\plus_{|\mathbf{g}|=j+1}\im(B^{k+1}_\mathbf{g}\hookrightarrow H_2(\Gamma/\Gamma_{k+1}))$.
Choose some $\mathbf{g_{j+1}}$ with $|\mathbf{g_{j+1}}|=j+1$, and use basic commutators of length $k$ given by a basis for $B^k_\mathbf{g_{j+1}}$ to construct a basis for $B^{k+1}_\mathbf{g_{j+1}}$.
Because the split inclusions $B_\mathbf{g}\hookrightarrow B_\mathbf{h}$, $\mathbf{g}\subset\mathbf{h}$, are determined by inclusions of basic commutators, the basis for $B^{k+1}_\mathbf{g_{j+1}}$ is compatible with the basis already constructed for $B_+=\plus_{\mathbf{h}\subsetneq\mathbf{g_{j+1}}}\im(B^{k+1}_\mathbf{h}\hookrightarrow B^{k+1}_{\mathbf{g_{j+1}}})$.
In other words, the basis for $B^{k+1}_\mathbf{g_{j+1}}$ given by basic commutators is an extension of ordered bases already constructed.

Similarly to the proof of Proposition~\ref{prop:basis}, $B_+$ forms a direct summand of $B^{k+1}_\mathbf{g_{j+1}}$.
We extend the ordered basis for $B_+$ to a basis for $B^{k+1}_\mathbf{g_{j+1}}$, ordering the additional basis elements in any way but requiring that they are greater than all basis elements of $B_+$.
As in the proof of Proposition~\ref{prop:basis}, these additional basis elements are initial in $B^{k+1}_\mathbf{g_{j+1}}$.
Now use order-preserving relabeling isomorphisms to define ordered bases for all $B^{k+1}_\mathbf{h}$, $|\mathbf{h}|=j+1$.
Note that these isomorphisms send basic commutators to basic commutators.
Like the proof of Proposition~\ref{prop:basis}, we may do this without conflict by the induction hypothesis and because initial elements are sent to initial elements under relabeling isomorphisms.
Order all resulting initial basis elements using the lexicographic order, followed by the ordering induced by the ordering on initial basis elements of $B^{k+1}_\mathbf{g_{j+1}}$.
Order all basis elements of $\plus_{|\mathbf{g}|={j+1}}\im(B^{k+1}_\mathbf{g}\hookrightarrow H_2(\Gamma/\Gamma_{k+1}))$ by requiring that all basis elements which are images of initial basis elements in some $B^{k+1}_\mathbf{h}$, $|\mathbf{h}|=j+1$, are greater than all basis elements which are initial in some $B^{k+1}_\mathbf{h}$, $|\mathbf{h}|<j+1$.
This achieves the desired ordering, first by the size of the $\mathbf{g}$, then by the lexicographic ordering on elements of the $\mathbf{g}$, and finally by the ordering fixed for each $B^{k+1}_\mathbf{g_{i}}$ and induced on all $B^{k+1}_\mathbf{h}$ with $|\mathbf{h}|=i$.

Thus, we have extended the ordered basis for $\plus_{|\mathbf{g}|=j}\im(B^{k+1}_\mathbf{g}\hookrightarrow H_2(\Gamma/\Gamma_{k+1}))$ to an ordered basis of basic commutators for $\plus_{|\mathbf{g}|={j+1}}\im(B^{k+1}_\mathbf{g}\hookrightarrow H_2(\Gamma/\Gamma_{k+1}))$ which is preserved by order-preserving relabeling isomorphisms.
Continuing inductively, we achieve a colimit-compatible ordered basis for $H_2(\Gamma/\Gamma_{k+1})$ which completes the initial induction.
\end{proof}

To conclude our analysis of $H_2(\Gamma/\Gamma_n)$, we state results analogous to Proposition~\ref{prop:permmodule} and Corollary~\ref{cor:freeC}, culminating in conditions that guarantee $H_2(\Gamma/\Gamma_n)$ is a free $\Z[\pi_1(M)]$-module.
We omit the proofs as they are entirely similar to those seen in Section~\ref{subsec:zpi:action}.
Recall that, unlike the case of $H_3(\Gamma/\Gamma_n)$, the $\pi_1(M)$-action on $H_2(\Gamma/\Gamma_n)$ induces relabeling isomorphisms $B_\mathbf{h}\xrightarrow{\cong}B_{g\cdot\mathbf{h}}$, so each $g\in\pi_1(M)$ permutes the colimit-compatible basis for $H_2(\Gamma/\Gamma_n)$.

\begin{proposition} \label{prop:permmoduleB}
If the left cosets of $\langle[K]\rangle\leq\pi_1(M)$ admit a total ordering which is preserved by the $\pi_1(M)$-action by left multiplication, then $H_2(\Gamma/\Gamma_n)$ is a permutation module.
\end{proposition}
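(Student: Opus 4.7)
The plan is to follow exactly the strategy of Proposition~\ref{prop:permmodule}, replacing the role of $C_n$ with $H_2(\Gamma/\Gamma_n)$ and invoking Proposition~\ref{prop:basisB} in place of Proposition~\ref{prop:basis}. First I would fix a total ordering on the left cosets of $\langle[K]\rangle$ invariant under the $\pi_1(M)$-action by left multiplication, and apply Proposition~\ref{prop:basisB} to obtain a colimit-compatible ordered basis $\beta$ of basic commutators for $H_2(\Gamma/\Gamma_n)$, restricting to ordered bases $\beta_\mathbf{g}$ of each $B_\mathbf{g}$.

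Second, I would invoke the observation made in the paragraph immediately preceding Proposition~\ref{prop:basisB}: because an $n$-fold commutator in $\Gamma_n$ is invariant modulo $\Gamma_{n+1}$ under conjugation of its individual generators by elements of $\Gamma$, the conjugation action of $\pi/\Gamma_n$ on $\Gamma_n/\Gamma_{n+1} \cong H_2(\Gamma/\Gamma_n)$ descends to an action in which each $g \in \pi_1(M)$ acts by a relabeling isomorphism $B_\mathbf{h} \xrightarrow{\cong} B_{g \cdot \mathbf{h}}$. This is the crucial difference from the $H_3$ case, where analogous isomorphisms carried extra conjugating factors and forced the detour through $C_n$; here the action is already clean on the nose.

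Finally, since by hypothesis the ordering on cosets is preserved by left multiplication, each relabeling isomorphism $B_\mathbf{h} \to B_{g \cdot \mathbf{h}}$ is order-preserving, and so by colimit-compatibility it sends $\beta_\mathbf{h}$ bijectively onto $\beta_{g \cdot \mathbf{h}}$. Because $H_2(\Gamma/\Gamma_n) = \colim_\mathbf{g} B_\mathbf{g}$ by Proposition~\ref{prop:colimB}, the action of each $g \in \pi_1(M)$ thus induces a bijection $\beta \to \beta$, exhibiting $H_2(\Gamma/\Gamma_n)$ as a permutation module. No genuine obstacle remains at this stage: all the substantive work — the construction of the colimit-compatible basis of basic commutators and the verification that the $\pi_1(M)$-action on $\Gamma_n/\Gamma_{n+1}$ restricts to honest relabeling isomorphisms — has been carried out in the preceding results, so the proof is a direct assembly.
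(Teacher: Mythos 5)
Your proposal is correct and follows exactly the route the paper intends: the paper explicitly omits the proof of this proposition, noting it is ``entirely similar'' to the argument in Section~\ref{subsec:zpi:action} for $C_n$, and your adaptation --- fix the invariant ordering, apply Proposition~\ref{prop:basisB} to get a colimit-compatible basis of basic commutators, observe that the $\pi_1(M)$-action induces order-preserving relabeling isomorphisms $B_\mathbf{h} \xrightarrow{\cong} B_{g\cdot\mathbf{h}}$, and conclude via Proposition~\ref{prop:colimB} that the action permutes $\beta$ --- is precisely that argument.
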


\begin{corollary} \label{cor:freeH2}
Suppose conditions (1)-(3) of Theorem~\ref{thm:infinite} are satisfied:
\begin{enumerate}[label=(\arabic*)]
\item $[K]$ is primitive and infinite order in $H_1(M)$.
\item The centralizer of any nontrivial power of $[K]\in\pi_1(M)$ is cyclic.
\item The left cosets of $\langle[K]\rangle\leq\pi_1(M)$ admit a total ordering which is invariant under the $\pi_1(M)$-action by left multiplication.
\end{enumerate}
Then $H_2(\Gamma/\Gamma_n)$ is a free $\Z[\pi_1(M)]$-module.
\end{corollary}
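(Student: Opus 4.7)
The plan is to mimic the proof of Corollary~\ref{cor:freeC} almost verbatim, with the colimit-compatible basis of basic commutators for $H_2(\Gamma/\Gamma_n)$ (Proposition~\ref{prop:basisB}) playing the role of the colimit-compatible basis for $C_n$. First, I would invoke Proposition~\ref{prop:permmoduleB}, which under condition (3) produces a total ordering of cosets and a resulting ordered colimit-compatible basis $\beta$ of basic commutators permuted by the $\pi_1(M)$-action. Since $H_2(\Gamma/\Gamma_n)$ is then a permutation module on $\beta$, freeness over $\Z[\pi_1(M)]$ reduces to showing that the stabilizer in $\pi_1(M)$ of every basis element is trivial, because each $\pi_1(M)$-orbit with trivial stabilizer contributes a free summand isomorphic to $\Z[\pi_1(M)]$.

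To establish triviality of stabilizers, suppose for contradiction that some nontrivial $g\in\pi_1(M)$ fixes a basis element $c\in\beta$. By construction $c$ is the image of some initial $c_\mathbf{h}\in B_\mathbf{h}$ belonging to the chosen basic-commutator basis $\beta_\mathbf{h}$. The action of $g$ on $H_2(\Gamma/\Gamma_n)$ restricts, by the discussion preceding Proposition~\ref{prop:permmoduleB}, to the order-preserving relabeling isomorphism $B_\mathbf{h}\xrightarrow{\cong}B_{g\cdot\mathbf{h}}$, which carries $\beta_\mathbf{h}$ onto $\beta_{g\cdot\mathbf{h}}$ and preserves initiality. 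Because $g\cdot c_\mathbf{h}$ and $c_\mathbf{h}$ both map to $c$, uniqueness of initial elements (Proposition~\ref{prop:initialB}) forces $g\cdot c_\mathbf{h}=c_\mathbf{h}$ and $B_{g\cdot\mathbf{h}}=B_\mathbf{h}$, so $g\cdot\mathbf{h}=\mathbf{h}$ as ordered sets of cosets, i.e., $g$ fixes each coset in $\mathbf{h}$ setwise.

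The only remaining point before copying the Corollary~\ref{cor:freeC} argument verbatim is to verify $|\mathbf{h}|\geq 2$. Since $n\geq 2$, every basic commutator contributing to $B_\mathbf{h}$ has weight $n\geq 2$, and a straightforward induction on weight using the defining inequality $a<b$ in any basic commutator $[a,b]$ shows that basic commutators of weight $\geq 2$ always involve at least two distinct generators $\mu_{h_i}$; hence $|\mathbf{h}|\geq 2$. With this in hand, pick two representatives $h_1,h_2\in\mathbf{h}$ of distinct cosets fixed by $g$. By Proposition~\ref{prop:cosets} this gives $h_i^{-1}gh_i=x^{k_i}$ with $k_i\neq 0$ (setting $x=[K]$), so that with $h=h_2^{-1}h_1\neq 1$ one has $[h,x^{k_1}]=x^{k_2-k_1}$. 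Conditions (1) and (2) then yield a contradiction exactly as in Corollary~\ref{cor:freeC}: if $k_1\neq k_2$ the relation $(k_2-k_1)[x]=0\in H_1(M)$ contradicts primitivity and infinite order of $[K]$, while if $k_1=k_2$ then $h$ centralizes $x^{k_1}$, hence $h\in\langle x\rangle$ by (2), contradicting $h_1\langle x\rangle\neq h_2\langle x\rangle$.

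The main obstacle, if any, is really the bookkeeping verification that basic commutators of weight $\geq 2$ always involve at least two generators so that $|\mathbf{h}|\geq 2$; everything else is a direct parallel to the $C_n$ case, with Proposition~\ref{prop:initialB} and the relabeling description of the action on $B_\mathbf{h}$ replacing their $C_\mathbf{h}$ counterparts. No new ingredient beyond Propositions~\ref{prop:basisB}, \ref{prop:initialB}, and \ref{prop:permmoduleB} is needed.
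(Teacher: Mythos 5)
Your proof is correct and follows exactly the route the paper intends: the author omits the proof with the remark that it is ``entirely similar'' to Corollary~\ref{cor:freeC}, and your argument is precisely that parallel, substituting Propositions~\ref{prop:initialB}, \ref{prop:basisB}, and \ref{prop:permmoduleB} for their $C_n$-analogues. One small simplification worth noting: the $|\mathbf{h}|\geq 2$ step does not need an induction on basic commutator weight, since $B_\mathbf{h}=F(\mu_\mathbf{h})_n/F(\mu_\mathbf{h})_{n+1}=0$ whenever $|\mathbf{h}|=1$ (the free group on one generator is abelian, so $F_n=1$ for $n\geq 2$), hence any nontrivial initial element forces $|\mathbf{h}|\geq 2$ immediately.
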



\section{Alternate methods} \label{sec:alt}

While in principle the methods developed in this paper could be used to confirm the \hyperref[conj:ac]{Almost-Concordance Conjecture} for any nontrivial homotopy class $x$ in any aspherical $M$, the intricacies explored in Section~\ref{sec:zpi} suggest some extensive casework might be needed to realize this result.
To conclude this paper, we suggest a number of alternate methods and further lines of study for proving the \hyperref[conj:ac]{Almost-Concordance Conjecture} in remaining open cases.


\subsection*{Understanding the differential $d^2_{2,2}$}

The proof of Theorem~\ref{thm:infinite} reveals that understanding the $\Z[\pi_1(M)]$-module structure on $H_*(\Gamma/\Gamma_n)$ is beneficial for showing that Milnor's invariants of links $L\subset S^3$ ``survive" as almost-concordance invariants of knots in $M\neq S^3$.
Our analysis in Section~\ref{sec:zpi} focuses on showing $H_2(\Gamma/\Gamma_n)$ and $C_n$ are free $\Z[\pi_1(M)]$-modules under the assumptions of Theorem~\ref{thm:infinite}.
When $H_2(\Gamma/\Gamma_n)$ is free, $H_2(M;H_2(\Gamma/\Gamma_n))=0$, and the image of the differential $d^2_{2,2}$ in the Lyndon-Hochschild-Serre spectral sequence seen in Section~\ref{subsec:main:large} does not affect the inclusion-induced homomorphism $H_3(\Gamma/\Gamma_n)\to H_3(\pi/\Gamma_n)$.

Even if the module $H_2(\Gamma/\Gamma_n)$ is not free, it may be the case that $d^2_{2,2}=0$ or has small image, so that one may still use Theorem~\ref{thm:family} to construct knots representing infinitely many almost-concordance classes within a fixed homotopy class.
Further analysis is needed to determine when this is the case.


\subsection*{Realization theorems}

Some alternate methods of proving the \hyperref[conj:ac]{Almost-Concordance Conjecture} using the invariants from Section~\ref{sec:Milnor} rely on realization theorems.
Theorem C of \cite{Stees24} provides necessary and sufficient conditions under which a \emph{homotopy class} $f\in[M,X_n(K)]_0$ is realized as the lower central \emph{homotopy} invariant $h_n(K')$ of some knot $K'$ relative to the fixed knot $K\subset M$.
When $M$ is aspherical, Corollary 6.2 of \cite{Stees24} implies the technical third condition of the realization theorem is always satisfied, simplifying the analysis of the set of realizable homotopy classes.
In this work, we chose to pursue the \hyperref[conj:ac]{Almost-Concordance Conjecture} using instead the lower central \emph{homology} invariants $\theta_n$ because it seems more straightforward to prove results like Theorems~\ref{thm:family} and~\ref{thm:infinite} involving large families of 3-manifolds and homotopy classes.
Nevertheless, the realization theorem for lower central homotopy invariants is a potentially useful tool toward confirming the conjecture.

As discussed in \cite{Stees24}, a realization theorem for the $\Z[\pi_1(M)]$-lower central \emph{homology} invariants may rely on extending a 3-dimensional homology surgery result of Turaev \cite{Turaev84}.
Another avenue toward verifying the \hyperref[conj:ac]{Almost-Concordance Conjecture} would be to extend Turaev's result, prove such a realization theorem, and then execute algebraic arguments to show there are infinitely many realizable classes.
Theorem E of \cite{Stees24} is a realization theorem for $\Z$-lower central homology invariants; however, these invariants, which concern the classical lower central series of the knot group $\pi$, may not be strong enough to detect nontrivial almost-concordance classes.
For instance, if the homotopy class $x$ in question has infinite order in $\pi_1(M)$, it may be the case that the meridian of any fixed knot $K$ representing $x$ is nullhomologous in $E_K$, in which case it belongs to $\pi_2=[\pi,\pi]$.
It is unclear whether this prevents the $\Z$-lower central homology invariants from providing almost-concordance information, but we note that the results in this paper rely heavily on the nontriviality of the meridian of $K$ in $\Gamma/\Gamma_2$.
Thus, the full power of the $\Z[\pi_1(M)]$-lower central quotients may be required, necessitating the use of the $\Z[\pi_1(M)]$-lower central homology invariants.


\subsection*{Schneiderman's invariant}

Schneiderman developed a \emph{relative self-linking invariant} in \cite{Schneiderman} which can be used to prove the \hyperref[conj:ac]{Almost-Concordance Conjecture} for the trivial homotopy class in any 3-manifold.
We expect Schneiderman's invariant is capable of detecting infinitely many almost-concordance classes in many cases; however, the indeterminacy subgroup $\Phi$ involved in the definition of the invariant is not well-understood.
We anticipate that the lower central homotopy invariant $h_1$ relative to a fixed knot $K$ is closely related to Schneiderman's self-linking invariant relative to $K$; this is the subject of future work.


\bibliography{references.bib}


\end{document}